\theoremstyle{plain}
\newtheorem{Lemma}{Lemma}
\newtheorem{Theorem}[Lemma]{Theorem}
\newtheorem{Hypothesis}[Lemma]{Hypothesis}
\newtheorem{Proposition}[Lemma]{Proposition}
\newtheorem{Corollary}[Lemma]{Corollary}
\newcommand*{\rom}[1]{\expandafter\@slowromancap\romannumeral #1@}
\title{The large sieve for square moduli, revisited}
\subjclass[2010]{11L07,11L40,11J25,11N35}
\keywords{large sieve, square moduli, additive energy, modular square roots, exponential sums}
\author{Stephan Baier}
\address{Stephan Baier,
Ramakrishna Mission Vivekananda Educational and Research Institute, Department of Mathematics, G. T. Road, PO Belur Math, Howrah, West Bengal 711202, India}
\email{stephanbaier2017@gmail.com}
\begin{document}
\maketitle
\begin{abstract} We revisit the large sieve for square moduli and obtain conditional improvements under hypotheses on higher additive energies of modular square roots.  
\end{abstract}

\tableofcontents

\section{Current state of the art}
\subsection{Review of the classical large sieve}
Throughout this article, following usual custom, we assume that $\varepsilon$ is an arbitrarily small but fixed positive number. For a real number $x$ and a positive integer $r$, we set 
$$
e(x):=e^{2\pi i x} \quad \mbox{and} \quad e_r(x):=e\left(\frac{x}{r}\right). 
$$

Large sieve inequalities have turned out extremely useful in analytic number theory. The classical example is the large sieve inequality for additive characters, which can be easily converted into a large sieve inequality for multiplicative characters. It originated in Linnik's work \cite{Lin} on the least quadratic non-residue modulo primes. This inequality is a very flexible tool: It can be turned into an upper bound sieve but also be used to understand the distribution of arithmetic functions in residue classes on average, for example. Its statement is as follows (see \cite[Satz 5.2.2]{Bru} for the specific case of Farey fractions). 

\begin{Theorem} \label{classicallargesieve}
Let $Q,N\in \mathbb{N}$, $M\in \mathbb{Z}$ and $(a_n)_{M<n\le M+N}$ be any sequence of complex numbers.  Then
\begin{equation} \label{lsclassical}
\sum\limits_{q\le Q} \sum\limits_{\substack{a=1\\ (a,q)=1}}^q \left| \sum\limits_{M<n\le M+N} a_ne\left(\frac{na}{q}\right)\right|^2\le (Q^2+N-1)\sum\limits_{M<n\le M+N} |a_n|^2. 
\end{equation}
\end{Theorem}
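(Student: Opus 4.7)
The plan is to follow the classical route: reduce the inequality to a statement about well-spaced frequencies on the circle, and then exploit the Diophantine spacing of Farey fractions.

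First I would verify that the set of Farey fractions $\{a/q : 1\le q\le Q,\ 1\le a\le q,\ (a,q)=1\}$ is $Q^{-2}$-well-spaced modulo $1$: for two distinct such fractions $a/q\ne a'/q'$, writing $a/q-a'/q'=(aq'-a'q)/(qq')$ and noting that the numerator is a nonzero integer gives
\[
\left\|\frac{a}{q}-\frac{a'}{q'}\right\|\ \ge\ \frac{1}{qq'}\ \ge\ \frac{1}{Q^{2}}.
\]

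Next I would reduce Theorem \ref{classicallargesieve} to the analytic large sieve for well-spaced frequencies: if $x_{1},\dots,x_{R}\in\mathbb{R}/\mathbb{Z}$ satisfy $\|x_{r}-x_{s}\|\ge\delta$ for all $r\ne s$, then
\[
\sum_{r=1}^{R}\Bigl|\sum_{M<n\le M+N}a_{n}\,e(nx_{r})\Bigr|^{2}\ \le\ (N-1+\delta^{-1})\sum_{n}|a_{n}|^{2}.
\]
Applying this to the Farey fractions with $\delta=Q^{-2}$ yields precisely the constant $Q^{2}+N-1$ appearing in \eqref{lsclassical}.

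For the analytic inequality I would use the duality principle of bilinear forms (the operator norm equals the norm of the adjoint) to pass to the equivalent dual statement
\[
\sum_{M<n\le M+N}\Bigl|\sum_{r}c_{r}\,e(nx_{r})\Bigr|^{2}\ \le\ (N-1+\delta^{-1})\sum_{r}|c_{r}|^{2},
\]
expand the square, and split into the diagonal contribution $N\sum_{r}|c_{r}|^{2}$ and an off-diagonal double sum of geometric series $\sum_{n}e(n(x_{r}-x_{s}))$, each bounded by $\min(N,(2\|x_{r}-x_{s}\|)^{-1})$. Hilbert's inequality in the sharp form of Montgomery--Vaughan then handles the off-diagonal double sum and produces the required constant.

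The main obstacle lies in the sharpness of the constants rather than the overall shape: a crude spacing argument combined with Cauchy--Schwarz delivers an upper bound of shape $O(Q^{2}+N)$ almost for free, but obtaining the clean coefficient $1$ in front of both $Q^{2}$ and $N$ (equivalently, the exact constant $N-1+\delta^{-1}$ in the well-spaced inequality) requires either Selberg's majorant function construction or the sharp Montgomery--Vaughan form of Hilbert's inequality. Everything else in the argument is bookkeeping.
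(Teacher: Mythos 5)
The paper states Theorem \ref{classicallargesieve} as the classical large sieve and gives no proof of its own (it is cited as standard, going back to Linnik and its later sharp forms), and your outline is exactly the standard argument: Farey fractions are $Q^{-2}$-spaced, the well-spaced inequality with constant $N-1+\delta^{-1}$ is applied, and that inequality is proved by duality together with Selberg's majorant or the sharp Montgomery--Vaughan form of Hilbert's inequality. This is correct, including your remark that the crude bound $\min\left(N,(2\|x_r-x_s\|)^{-1}\right)$ on the off-diagonal geometric series would only give $O(Q^2+N)$ and that the clean constant genuinely requires the sharper tools.
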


The above inequality is sharp. Yet, there are many unresolved questions about variations of this inequality.

\subsection{Known results on the large sieve with sparse sets of moduli} \label{knownresults}
One way to modify the left-hand side of \eqref{lsclassical} is to restrict the moduli $q$ to a sparse subset $\mathcal{S}$ of the integers. Now the goal is to establish an inequality of the form
\begin{equation} \label{lssparse}
\sum\limits_{\substack{q\in \mathcal{S}\\ q\le Q}} \sum\limits_{\substack{a=1\\ (a,q)=1}}^q \left| \sum\limits_{M<n\le M+N} a_ne\left(\frac{na}{q}\right)\right|^2\le \Delta_{\mathcal{S}}(Q,N)Z,
\end{equation} 
where $\Delta_{\mathcal{S}}(Q,N)$ is a function of an as small as possible magnitude. Here and in the following, we set
$$
Z:=\sum\limits_{M<n\le M+N} |a_n|^2.
$$
Trivially, using \eqref{lsclassical}, the bound \eqref{lssparse} holds for $\Delta(Q,N)=Q^2+N-1$, but we aim to do better. By heuristic considerations one may expect that for generic sets $\mathcal{S}$ one may take $\Delta_{\mathcal{S}}(Q,N)$ roughly to be the number of possible fractions $a/q$ plus the length $N$ of the trigonometrical polynomial, which is about of size $Q\sharp\{q\in \mathcal{S}: q\le Q\}+N$. However, this may not be true in general as it is possible that there are points around which the fractions $a/q$ accumulate (see \cite{BLZ} for the case of square moduli). In section \ref{WolApp}, we will investigate the connection between $\Delta_{\mathcal{S}}(Q,N)$ and the distribution of fractions $a/q$ with $q\in \mathcal{S}$ and $(a,q)=1$ in more detail.

To my knowledge, the first who considered a situation as described above was Wolke who studied the case of prime moduli in \cite{Wol}. He proved that if $\mathcal{S}$ is the set of primes, then we may take
$$
\Delta_{\mathcal{S}}(Q,N)=\frac{Q^2\log \log Q}{\log(Q/\sqrt{N})},
$$
provided that $Q> \sqrt{N}$. The factor $\log\log Q$ is slightly unsatisfactory and was removed by Iwaniec \cite{Iwa} under the extra condition that the summation variable $n$ is free of a sufficient number of small prime divisors. 

Another sparse set $\mathcal{S}$ was later considered by Zhao who studied the case of square moduli in \cite{Zhao1}. There have been many subsequent works on square, and, more generally, power moduli, providing improvements as well as different approaches (see \cite{Bai1}, \cite{Bai2}, \cite{BaiLyn}, \cite{BaiZhao2}, \cite{BaiZhao1}, \cite{BMS}, \cite{Hal1}, \cite{Hal2}, \cite{Hal3}, \cite{Hal}). These papers use a variety of tools such as Diophantine approximation, elementary counting arguments, Fourier analysis, bounds arising from Vinogradov's mean value theorem, additive energy of sequences. In this paper, we revisit the case of square moduli and make conditional progress.       

\subsection{Known results on the large sieve for square moduli} \label{known}
For $\mathcal{S}$ the set of squares of integers, using Fourier analytic tools, Zhao established in \cite{Zhao1} that \eqref{lssparse} holds with 
$$
\Delta_{\mathcal{S}}(Q_0,N)=(Q_0N)^{\varepsilon}\left(Q_0^{3/2}+Q_0N^{1/2}+Q_0^{1/4}N\right)
$$
for all positive integers $Q_0$ and $N$. In other words, setting $Q=Q_0^{1/2}$, we have that
\begin{equation} \label{Zhaobound}
\sum\limits_{q\le Q}\sum\limits_{\substack{a=1\\ (a,q)=1}}^{q^2} \left|\sum\limits_{M<n\le M+N} a_ne\left(\frac{na}{q^2}\right)\right|^2\ll \tilde{\Delta}(Q,N)Z
\end{equation}
with 
\begin{equation} \label{tildeDelta}
\tilde{\Delta}(Q,N)=(QN)^{\varepsilon}\left(Q^3+Q^2N^{1/2}+Q^{1/2}N\right).
\end{equation}
Zhao also conjectured that we may take
\begin{equation} \label{conj}
\tilde{\Delta}(Q,N)=(QN)^{\varepsilon}(Q^3+N).
\end{equation}
Note that $Q^3$ is roughly the number of Faray fractions $a/q$ on the left-hand side of \eqref{Zhaobound} and $N$ is the length of the trigonometrial polynomial. On the other side,  the author of the present paper, Lynch and Zhao proved in \cite{BLZ} that \eqref{Zhaobound} does not hold with $\tilde{\Delta}(Q,N)=Q^3+N$. 
Based on Wolke's method from \cite{Wol}, the author of the present paper showed in \cite{Bai1} using Diophantine approximation and elementary counting arguments that the term $Q^{1/2}N$ in \eqref{tildeDelta} can be replaced by $N$. This gave an improvement for $Q\le N^{1/3-\varepsilon}$. In \cite{BaiZhao1}, merging their methods, Zhao and the author of the present paper subsequently replaced \eqref{tildeDelta} by 
\begin{equation} \label{tildeDelta'}
\tilde{\Delta}(Q,N)=(QN)^{\varepsilon}\left(Q^3+N+\min\left\{Q^2N^{1/2},Q^{1/2}N\right\}\right),
\end{equation}
which gave another improvement for $Q\ge N^{1/3+\varepsilon}$. However,  since Zhao's initial result in \cite{Zhao1} there has been no progress at the point $Q=N^{1/3}$ for over twenty years. This point may be viewed as the critical point, where both terms $Q^3$ and $N$ in the conjectural bound stated in \eqref{conj} coincide. For $Q=N^{1/3}$, we still have no more than a bound of $\tilde{\Delta}(Q,N)=Q^{7/2+\varepsilon}$, which is a factor of $Q^{1/2}$ off the conjectured bound $\tilde{\Delta}(Q,N)=Q^{3+\varepsilon}$. It would be highly desirable to lower this extra factor $Q^{1/2}$ to $Q^{1/2-\eta}$ for some $\eta>0$. However, the exponent $1/2$ seems to be a barrier - so far all attempts to break it failed. In this paper, we break it conditionally under natural hypotheses on higher additive energies of modular square roots. Throughout the sequel, we will assume that 
\begin{equation} \label{QNcond}
Q^2\le N\le Q^4
\end{equation}
since if $N$ lies outside this range, then \eqref{tildeDelta'} implies Zhao's conjectural bound. Let us record the best known bound for the large sieve with square moduli in \cite{BaiZhao1} as a theorem below. 

\begin{Theorem} \label{bestknownls} Let $Q,N\in \mathbb{N}$, $M\in \mathbb{Z}$ and $(a_n)_{M<n\le M+N}$ be any sequence of complex numbers.  Then
\begin{equation*}
\sum\limits_{q\le Q}\sum\limits_{\substack{a=1\\ (a,q)=1}}^{q^2} \left|\sum\limits_{M<n\le M+N} a_ne\left(\frac{na}{q^2}\right)\right|^2\ll (QN)^{\varepsilon}\left(Q^3+N+\min\left\{Q^2N^{1/2},Q^{1/2}N\right\}\right)Z.
\end{equation*}
\end{Theorem}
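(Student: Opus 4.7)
My plan is to establish the bound by proving two separate estimates and taking their minimum, thereby recovering Baier's bound from \cite{Bai1} together with the refinement from \cite{BaiZhao1}. By the duality principle for the large sieve, the inequality is equivalent to the dual form
\begin{equation*}
\sum_{M<n\le M+N}\Bigl|\sum_{q\le Q}\sum_{\substack{a=1\\(a,q)=1}}^{q^2} b_{q,a}\, e\!\left(\frac{na}{q^2}\right)\Bigr|^2 \ll \tilde{\Delta}(Q,N)\sum_{q,a}|b_{q,a}|^2
\end{equation*}
for arbitrary complex coefficients $b_{q,a}$. As a first step I would expand the square and smooth the sum over $n$ by a Fej\'er-type majorant. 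This reduces matters to a bilinear form on Farey fractions with square denominators, weighted by something of order $\min(N,\|\theta\|^{-2}/N)$ at $\theta = a_1/q_1^2 - a_2/q_2^2$. After a Cauchy--Schwarz step, the remaining task is to bound the number of near-coincidences $\|a_1/q_1^2-a_2/q_2^2\|<1/N$ weighted appropriately by the $|b_{q,a}|^2$.

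The first of my two bounds is $\tilde\Delta \ll (QN)^\varepsilon (Q^3+Q^2N^{1/2}+N)$, which is Baier's theorem from \cite{Bai1}. It refines Zhao's original estimate from \cite{Zhao1} by keeping the Fourier-analytic treatment of close pairs, which via Poisson summation and estimates for the resulting exponential sums (involving modular square roots) contributes the terms $Q^3$ and $Q^2 N^{1/2}$, while replacing Zhao's term $Q^{1/2}N$ by $N$. The replacement proceeds via the Wolke method \cite{Wol}: each fraction $a/q^2$ is approximated Diophantinely by a rational $b/r$ with $r$ much smaller than $q^2$, after which the number of admissible square denominators clustering near $b/r$ is controlled by an elementary sieve argument.

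The second bound is $\tilde\Delta \ll (QN)^\varepsilon (Q^3+Q^{1/2}N+N)$, from \cite{BaiZhao1}, which is sharper than the first in the range $N<Q^3$. Here I would translate the near-coincidence condition directly into the integer inequality $|a_1 q_2^2 - a_2 q_1^2| \le q_1^2 q_2^2/N \le Q^4/N$. Summing over the nonzero values of $h = a_1 q_2^2 - a_2 q_1^2$ with $|h| \le Q^4/N$, for fixed $h,q_1,q_2$ the count of admissible $(a_1,a_2)$ reduces to extracting modular square roots of a fixed residue, and standard bounds on the number of such roots combined with the divisor bound on $h$ produce the term $Q^{1/2}N$.

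The main obstacle will be the sharp estimation of near-coincidence counts in the second bound, because, as shown in \cite{BLZ}, fractions $a/q^2$ can genuinely cluster around rationals of small height and a naive count loses powers of $Q$. The remedy is a careful dyadic decomposition over the magnitude of $h$ together with the equidistribution of modular square roots, which permits the required savings. Once both bounds are in hand, taking the pointwise minimum yields $\tilde\Delta(Q,N) \ll (QN)^\varepsilon (Q^3+N+\min\{Q^2N^{1/2},Q^{1/2}N\})$, completing the proof.
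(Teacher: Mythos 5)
The theorem as recorded in the paper is simply a citation of the combined results of \cite{Bai1} and \cite{BaiZhao1}; the paper offers no proof. Your top-level decomposition is accurate: the bound is the minimum of $\tilde\Delta \ll (QN)^{\varepsilon}(Q^3+Q^2N^{1/2}+N)$ from \cite{Bai1} and $\tilde\Delta\ll (QN)^{\varepsilon}(Q^3+Q^{1/2}N)$ from \cite{BaiZhao1}. However, your descriptions of how each bound is obtained do not match the actual arguments, and the second sketch has a genuine gap. For the first bound you state that the terms $Q^3$ and $Q^2N^{1/2}$ come from a Fourier-analytic treatment inherited from \cite{Zhao1}, with Wolke's method only replacing $Q^{1/2}N$ by $N$. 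In fact, as explained in subsection 5.1 of this paper, \cite{Bai1} derives the \emph{entire} bound $Q^3+Q^2N^{1/2}+N$ from a single Wolke-style argument (Dirichlet approximation of $x$ by $b/r$, then direct counting of solutions of $q^2\equiv-\overline{b}m\bmod r$ in short boxes), with no Poisson summation at all; cf.\ the passage from \eqref{firstbound} to \eqref{lstrivial}.

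The serious issue is your second sketch. You propose to get $Q^3+Q^{1/2}N$ by translating the near-coincidence condition into $|a_1q_2^2-a_2q_1^2|\le Q^4/N$, summing over $h$, and invoking ``standard bounds'' on modular square roots and the divisor function, with dyadic decomposition and equidistribution of square roots handling the clustering issue from \cite{BLZ}. This is precisely the obstacle that cannot be overcome by elementary counting: the paper's subsection 5.3 states explicitly that a direct count is ``unable to avoid the contribution `$1+$' in the term $1+\frac{\delta/Q}{r}$ \ldots which leads to the extra term $Q^2N^{1/2}$,'' and that Fourier-analytic tools are required to remove it. The actual proof in \cite{BaiZhao1} proceeds by Poisson summation in \emph{both} variables $q$ and $m$, explicit evaluation of the resulting quadratic Gauss sums and exponential integrals (rather than mere upper bounds), and a subsequent Weyl-differencing step, treated separately in the ranges $r\le Q$ and $r>Q$. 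Your sketch contains none of these ingredients, and the appeal to ``equidistribution of modular square roots'' is exactly the kind of unconditional statement that, as this paper explains at length (Hypotheses \ref{H1}--\ref{H3}), is not available and is the reason the new results here are conditional. As written, the step producing $Q^{1/2}N$ without the accompanying $Q^2N^{1/2}$ is asserted rather than argued, and the proposed remedy for the clustering phenomenon would need to be made precise before the proof could be considered complete.
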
 

\subsection{Applications} The large sieve for square moduli has found a number of applications. In \cite{BaiZhao}, the authors of \cite{BaiZhao1} used it to establish the infinitude of primes of the form $aq^2+1$ with $a\le q^{5/9+\varepsilon}$. Matom\"aki \cite{Mat} improved the exponent $5/9$ to $1/2$ by injecting Harman's sieve in the method. Further progress was made by Merikoski \cite{Mer} who replaced the exponent $1/2$, which can be viewed as a natural barrier, by $1/2-\eta$ for some small $\eta>0$. The large sieve for square moduli also played a role in the paper \cite{BFKS} by Bourgain, Ford, Konyagin and Shparlinski which investigated divisibility of Fermat quotients. It was also applied in the papers \cite{BPS} by Banks, Pappalardi and Shparlinski and \cite{SZ} by Shparlinski and Zhao which investigated quantitative properties of families of elliptic curves over finite fields.    

\section{Additive energies of modular square roots and main result}
Additive energy is an important concept in additive combinatorics. During the recent years, it has turned out to be very useful in analytic number theory too (see \cite{BMS}, \cite{DuZa}, \cite{DKSZ}, \cite{May}, \cite{KSSZ}, \cite{SSZ22}, \cite{SSZ},  for example) and will likely continue to be. This includes the work \cite{BMS} by Baker, Munsch and Shparlinski on the large sieve with sparse sets of moduli. Our article builds on recent investigations of additive energies related to modular square roots. This development was started off in the paper \cite{DuZa} by Dunn and Zaharescu (published first on the arXiv preprint server in 2019 (arXiv:1903.03416) and appeared in 2025), where modular square roots arose in connection with bilinear forms of Sali\'e sums. This line of investigations was continued in the paper \cite{DKSZ} by Dunn, Kerr, Shparlinski and Zaharescu, where improvements were obtained. Our article relies on subsequent results by Kerr, Shkredov, Shparlinski and Zaharescu in \cite{KSSZ} and \cite{SSZ} (see also \cite{SSZ22} for an average result with a power saving). A concise summary of this sequence of works can be found in \cite[section 1, after Theorem 1.2]{DuZa}. In this section, we describe the results on additive energies of modular square roots in \cite{KSSZ} and \cite{SSZ}, state hypotheses and formulate our main result of this article. 
 
Throughout this article, by abuse of notation, we denote a square root of an integer $m$ modulo a natural number $r$ by $\sqrt{m} \bmod r$, if it exists. So $\sqrt{m}$ stands for the {\it collection of all} $k \bmod q$ such that 
$k^2\equiv m \bmod{r}$. If $R\ge 1$ and $(j,r)=1$, we denote by $E_2(R;j,r)$ the additive energy of the set of modular square roots $\sqrt{jm} \bmod r$, where $m$ runs over all natural numbers not exceeding $R$. So we set 
\begin{equation} \label{AE}
E_2(R;j,r):=\sum\limits_{\substack{1\le m_1,m_2,m_3,m_4\le R\\ \sqrt{jm_1}+\sqrt{jm_2}\equiv \sqrt{jm_3}+\sqrt{jm_4}\bmod r}} 1 = \sum\limits_{\substack{(k_1,k_2,k_3,k_4)\bmod r\\ k_1+k_2\equiv k_3+k_4\bmod{r}\\ k_i^2\equiv jm_i \text{ for some } m_i\in \mathbb{N}\cap [1,R]}} 1.
\end{equation}
We also define the higher additive energy $E_4(R;j,r)$ by
\begin{equation} \label{AE4}
E_4(R;j,r):=\sum\limits_{\substack{1\le m_1,...,m_8\le R\\ \sqrt{jm_1}+\cdots + \sqrt{jm_4}\equiv \sqrt{jm_5}+\cdots +\sqrt{jm_8}\bmod r}} 1 = \sum\limits_{\substack{(k_1,...,k_8)\bmod r\\ k_1+\cdots +k_4\equiv k_5+\cdots +k_8\bmod{r}\\ k_i^2\equiv jm_i \text{ for some } m_i\in \mathbb{N}\cap [1,R]}} 1.
\end{equation}
For {\it prime} moduli $r$, Kerr, Shkredov, Shparlinski and Zaharescu \cite{KSSZ} recently established the following bound for the additive energy for modular square roots.

\begin{Theorem}[Theorem 1.1. in \cite{KSSZ}] \label{E2theo}
Let $r$ be a prime. Then for every $j\in \mathbb{Z}$ coprime to $r$ and any integer $R\le r$, we have 
\begin{equation} \label{E2result}
E_2(R;j,r)\ll \left(\frac{R^{3/2}}{r^{1/2}}+1\right)R^{2+\varepsilon}. 
\end{equation}
\end{Theorem}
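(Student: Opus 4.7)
The plan is to identify $E_2(R;j,r)$ with the additive energy $E^+(\mathcal{K})$ of the set
\[
\mathcal{K} := \{k \bmod r : k^2 \equiv jm \pmod{r} \text{ for some } m \in [1, R]\},
\]
which has $|\mathcal{K}| \le 2R$. Writing $E^+(\mathcal{K}) = \sum_{t \bmod r} T(t)^2$ with $T(t) := |\mathcal{K} \cap (\mathcal{K}+t)|$, the diagonal term $T(0)^2 = |\mathcal{K}|^2 \ll R^2$ already accounts for the $R^{2+\varepsilon}$ summand of the theorem, so the task reduces to proving the off-diagonal bound $\sum_{t \ne 0} T(t)^2 \ll R^{7/2+\varepsilon}/r^{1/2}$.

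For $t \neq 0$, I would parametrize each $a \in \mathcal{K} \cap (\mathcal{K}+t)$ by the pair $(m_1, m_2) \in [1, R]^2$ with $a^2 \equiv jm_1$ and $(a-t)^2 \equiv jm_2 \pmod{r}$. Eliminating $a$ (using $2at \equiv t^2 + j(m_1-m_2)$) yields the quartic
\[
t^4 - 2j(m_1 + m_2)\,t^2 + j^2(m_1 - m_2)^2 \equiv 0 \pmod{r},
\]
which factors over the square-root extension as $\bigl(t^2 - j(\sqrt{m_1}+\sqrt{m_2})^2\bigr)\bigl(t^2 - j(\sqrt{m_1}-\sqrt{m_2})^2\bigr) \equiv 0 \pmod{r}$. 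Consequently $T(t)$ counts (up to sign choices) representations $t \equiv \epsilon_1 \sqrt{jm_1} + \epsilon_2 \sqrt{jm_2} \pmod{r}$ with $\epsilon_i \in \{\pm 1\}$, and the off-diagonal contribution to $E_2$ is controlled by the additive energy of the auxiliary set $\mathcal{L} := \{\sqrt{jm} \bmod r : 1 \le m \le R,\ jm \text{ a quadratic residue}\}$ together with the four sign patterns.

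The crucial step is to exploit the small multiplicative energy of $\mathcal{L}$: squaring the relation $\ell_1 \ell_2 \equiv \ell_3 \ell_4 \pmod{r}$ for $\ell_i \in \mathcal{L}$ yields $m_1 m_2 \equiv m_3 m_4 \pmod{r}$, and a dissection into integer equations $m_1 m_2 - m_3 m_4 = nr$ (for $|n| \le R^2/r$) combined with the divisor bound gives $E^\times(\mathcal{L}) \ll R^{2+\varepsilon}(1 + R^2/r)$. One then applies a sum-product-type inequality in $\mathbb{F}_r$, based on Rudnev's point-plane incidence theorem and related refinements in \cite{SSZ22}, \cite{SSZ}, to convert this small multiplicative energy into the desired off-diagonal bound $\sum_{t \ne 0} T(t)^2 \ll R^{7/2+\varepsilon}/r^{1/2}$.

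The main obstacle is calibrating the sum-product inequality sharply enough to extract exactly the saving $R^{1/2}/r^{1/2}$ beyond the trivial $R^3$ estimate. A direct application of the generic bound $E^+(\mathcal{L})^2 \ll E^\times(\mathcal{L})\,|\mathcal{L}|^3\, r^{\varepsilon}$ gives only $E^+ \ll R^{5/2+\varepsilon} + R^{7/2+\varepsilon}/r^{1/2}$, which is weaker than the target in the small-$R$ regime; the argument of \cite{KSSZ} instead deploys tailored higher-energy refinements developed in the sequence \cite{DuZa}, \cite{DKSZ}, \cite{SSZ22}, \cite{SSZ} that exploit the specific algebraic structure of the modular square-root map. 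A complementary route is via Parseval, $E_2 \le R^4/r + R \cdot \max_{\xi \ne 0} |S(\xi)|^2$ with $S(\xi) := \sum_{k \in \mathcal{K}} e_r(\xi k)$, but the required subconvexity-type bound $|S(\xi)| \ll R^{5/4+\varepsilon}/r^{1/4}$ on this incomplete Salié-type sum is a problem of comparable depth, since the trivial completion only yields $|S(\xi)| \ll r^{1/2} \log r$.
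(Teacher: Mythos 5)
The paper under review does not prove this theorem; it quotes it with attribution from \cite{KSSZ} (Theorem~1.1), so there is no internal proof to compare against. Evaluated on its own terms, your proposal is a plan with an explicitly acknowledged gap rather than a proof. The reduction of $E_2(R;j,r)$ to $E^+(\mathcal{L})$ for $\mathcal{L}=\{\sqrt{jm}\bmod r:1\le m\le R\}$, the quartic $t^4-2j(m_1+m_2)t^2+j^2(m_1-m_2)^2\equiv 0\bmod r$ obtained by eliminating $a$, and the divisor-bound estimate $E^\times(\mathcal{L})\ll R^{2+\varepsilon}(1+R^2/r)$ are all correct.

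The missing step is exactly the one you flag yourself: the generic inequality $\bigl(E^+(\mathcal{L})\bigr)^2\ll E^\times(\mathcal{L})\,|\mathcal{L}|^3\,r^{\varepsilon}$ produces only $E^+(\mathcal{L})\ll R^{5/2+\varepsilon}+R^{7/2+\varepsilon}/r^{1/2}$, which loses a factor of $R^{1/2}$ against the asserted $R^{2+\varepsilon}$ throughout the diagonal-dominated regime $R\lesssim r^{1/3}$. Writing that \cite{KSSZ} ``instead deploys tailored higher-energy refinements'' is a pointer, not a derivation; without exhibiting the refinement the argument does not close. In \cite{KSSZ} that refinement is a direct application of Rudnev's point-plane incidence theorem in $\mathbb{F}_r^3$, applied to a system built from the relation $\sqrt{m_1}+\sqrt{m_2}\equiv\sqrt{m_3}+\sqrt{m_4}\bmod r$ together with a dissection over the small integer $m_1+m_2-m_3-m_4$, not a black-box sum-product inequality; your sketch does not supply this. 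Your alternative Fourier route would require a pointwise subconvex bound $|S(\xi)|\ll R^{5/4+\varepsilon}/r^{1/4}$ for incomplete Sali\'e-type sums which, as you correctly observe, is not available and of comparable depth, so it also leaves the claim unproved.
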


As remarked in \cite{KSSZ}, the higher additive energy $E_4(R;j,r)$ satisfies a bound of 
$$
E_4(R;j.r)\ll R^4E_2(R;j,r),
$$
and hence Theorem \ref{E2theo} implies the bound 
\begin{equation} \label{establishedbound}
E_4(R;j,r)\ll \left(\frac{R^{3/2}}{r^{1/2}}+1\right)R^{6+\varepsilon}. 
\end{equation}
In the same paper \cite{KSSZ}, the authors improved this bound for $R\le r^{1/12}$, establishing the following. 

\begin{Theorem}[Theorem 1.2. in \cite{KSSZ}] \label{E4theo}
Let $r$ be a prime. Then for every $j\in \mathbb{Z}$ coprime to $r$ and any integer $R\le r$, we have 
\begin{equation} \label{E4result}
E_4(R;j,r)\ll \left(\frac{R^{5/8}}{r^{1/8}}+\frac{R^{11/2}}{r^{1/2}}+\frac{R^3}{r^{1/4}}\right)R^{6+\varepsilon}+R^{5+\varepsilon}.
\end{equation}
\end{Theorem}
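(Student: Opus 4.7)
The plan is to leverage Theorem \ref{E2theo} together with a pointwise estimate on the associated exponential sum, arguing via interpolation between the fourth moment (where Theorem \ref{E2theo} applies directly) and the $L^\infty$-bound.

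Let $\mathcal{A} := \{k \in \mathbb{Z}/r\mathbb{Z} : k^2 \equiv jm \pmod{r} \text{ for some } 1 \le m \le R\}$. Since $r$ is prime, $|\mathcal{A}| \le 2R$, and both $E_2(R;j,r)$ and $E_4(R;j,r)$ agree, up to bounded multiplicative factors, with the ordinary additive energies of $\mathcal{A}$. Setting $S(h) := \sum_{k \in \mathcal{A}} e_r(hk)$, Plancherel gives
$$E_2(\mathcal{A}) = \frac{1}{r}\sum_{h \bmod r}|S(h)|^4, \qquad E_4(\mathcal{A}) = \frac{1}{r}\sum_{h \bmod r}|S(h)|^8,$$
so that if $T := \max_{h \not\equiv 0 \bmod r}|S(h)|$, then $E_4(\mathcal{A}) \le T^4 E_2(\mathcal{A}) + |\mathcal{A}|^8/r$. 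Feeding in Theorem \ref{E2theo} then reduces the problem to producing a sharp-enough pointwise bound on $T$ and optimising dyadically according to the size of $|S(h)|$.

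To estimate $T$, I would treat $S(h)$ as a Salié-type sum $\sum_{1 \le m \le R}\bigl(e_r(h\sqrt{jm})+e_r(-h\sqrt{jm})\bigr)$ over the arithmetic interval $[1,R]$ and apply Weyl/van der Corput differencing. Each differencing step converts a portion of $|S(h)|^2$ into an incomplete character sum over $\mathbb{F}_r$, to which Weil's bound applies thanks to the primality of $r$, producing an estimate $|S(h)|\ll R^{\alpha}r^{-\beta}$ with appropriate exponents $\alpha,\beta$. A dyadic partition of the $h$-sum according to the magnitude of $|S(h)|$, combined with the two regimes of Theorem \ref{E2theo} (the $R^{3/2}/r^{1/2}$ regime and the trivial regime), will produce the three mixed terms $R^{5/8+6}/r^{1/8}$, $R^{11/2+6}/r^{1/2}$ and $R^{3+6}/r^{1/4}$ in \eqref{E4result}; the diagonal contribution at $h \equiv 0$ and the lower-order $R^{2+\varepsilon}$-regime of Theorem \ref{E2theo} are absorbed into the residual $R^{5+\varepsilon}$.

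The main obstacle will be producing a pointwise estimate $T$ with exponents sharp enough to generate the leading saving $R^{5/8}/r^{1/8}$. A single round of Weyl differencing achieves only the classical square-root saving in the length of summation, which falls short; one likely has to iterate the differencing and couple it with a Cauchy--Schwarz step that reintroduces $E_2$, thereby bootstrapping Theorem \ref{E2theo} back into the pointwise bound itself. The primality of $r$ is essential throughout, both for invoking Weil's bound on the differenced character sums and for any auxiliary $\mathbb{F}_r$-incidence estimates needed in the dyadic optimisation.
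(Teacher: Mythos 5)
The paper does not prove Theorem \ref{E4theo}; it is imported verbatim from Theorem 1.2 of \cite{KSSZ}, so there is no proof in the paper to compare against. Your proposal is therefore an attempt to reconstruct an external result.

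Your Plancherel bookkeeping and the inequality $E_4 \le T^4 E_2 + |\mathcal{A}|^8/r$ obtained by peeling off the $h\equiv 0$ frequency are correct, but the crucial ingredient---a pointwise (or level-set) estimate on $|S(h)|$ that saves a power of $R$ in the range $R\le r^{1/2}$---is missing, and this is exactly the range in which Theorem \ref{E4theo} improves on the trivial bound $E_4\ll R^4E_2$. Completing $S(h)$ via Gauss sums gives only $T\ll r^{1/2}\log r$, which for $R\le r^{1/2}$ is \emph{worse} than the trivial $T\le|\mathcal{A}|\ll R$, so no saving results. One round of Weyl differencing turns $\sum_{m\le R}e_r(h\sqrt{jm})$ into incomplete Sali\'e-type sums over $[1,R]$, a range far below $r^{1/2}$, where Weil's bound yields nothing; and the Cauchy--Schwarz ``bootstrap'' you suggest reintroduces the energy $E_2$, not the large-value estimate you need, so the argument circles back on itself. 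You flag this obstruction yourself, but it is fatal as written. In fact the proof in \cite{KSSZ} does not run through an $L^\infty$--$L^4$ interpolation at all: it is built on incidence-geometric and energy-spectral machinery in the style of Rudnev, Stevens--de~Zeeuw and Shkredov, applied to configurations arising from the modular square-root relation and combined with their $E_2$ bound. The exponents $5/8$, $1/8$ and $1/4$ in \eqref{E4result} are fingerprints of those incidence/energy estimates, not of Weyl differencing.
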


In \cite{SSZ}, Shkredov, Shparlinski and Zaharescu showed that for almost all primes $r$ and all $R<r$ and $j$ coprime to $r$, the expected bound
\begin{equation} \label{2expect}
E_2(R;j,r)\ll \left(\frac{R^{4}}{r}+R^2\right)r^{\varepsilon}
\end{equation}
holds. This may be conjectured to hold for {\it all} primes $r$: Heuristically, the probability of a random 4-tuple $(m_1,m_2,m_3,m_4)\in \{1,...,R\}^4$ to satisfy the congruence 
$$
\sqrt{jm_1}+\sqrt{jm_2}\equiv \sqrt{jm_3}+\sqrt{jm_4}\bmod r
$$
for a suitable choice of modular square roots of $jm_i$ (if existent) is $1/r$. This yields the term $R^4/r$ in \eqref{2expect}. Moreover, we have a diagonal contribution coming from the 4-tuples of the form $(m_1,m_2,m_1,m_2)$, which yields the term $R^2$ in \eqref{2expect}. Theorem \ref{E2theo} establishes this for $R\le r^{1/3}$. Similarly, we may expect a bound of 
\begin{equation} \label{4expect}
E_4(R;j,r)\ll \left(\frac{R^{8}}{r}+R^4\right)r^{\varepsilon}
\end{equation}
for the four-fold additive energy, where the term $R^8/r$ arises from the probability $1/r$ of a random 8-tuples $(m_1,...,m_8)$ to satisfy the congruence
$$
\sqrt{jm_1}+\cdots +\sqrt{jm_4}\equiv \sqrt{jm_5}+\cdots +\sqrt{jm_8}\bmod r
$$
and the term $R^4$ accounts for the diagonal contribution of 8-tuples of the form $(m_1,...,m_4,m_1,...,m_4)$. 

It is conceivable that the methods used to prove the above Theorems \ref{E2theo} and \ref{E4theo} in \cite{KSSZ} can be made work for all moduli $r$, giving results similar to \eqref{E2result} and \eqref{E4result} without assuming $r$ to be a prime. Care needs to be taken, though: Whereas for prime moduli $r$, the number of modular square roots of a given $m \bmod r$ is bounded by 2, this number may be considerably larger if $r$ is not a prime. For example, if $r=p^2$ is the square of a prime, then $m=0$ has precisely $p$ square roots modulo $r$, namely $\sqrt{m}=kp$ with $k=0,...,p-1$. More generally, let $r=p^{\alpha}$ be a prime power and $0\le m=m_1p^{\beta}<r$ with $(m_1,p)=1$. If $m\equiv k^2\bmod{r}$, then $k=k_1p^{\lceil \beta/2 \rceil}$ for some $k_1\bmod{p^{\alpha-\lceil \beta/2\rceil}}$, and the congruence reduces to $m_1\equiv k_1^2p^{2\lceil \beta/2\rceil-\beta} \bmod{p^{\alpha-\beta}}$. If $m\not=0$, then $\beta<\alpha$ and $\beta$ is necessarily even. It follows that $m_1\equiv k_1^2\bmod{p^{\alpha-\beta}}$. Using Hensel's lemma, the number of solutions $k_1\bmod{p^{\alpha-\beta/2}}$ of this congruence can be as large as $2p^{\beta/2}$ if $p$ is odd and $2^{\beta/2+2}$ if $p=2$. In the case when $m=0$, there are exactly $p^{\lfloor \alpha/2\rfloor}$ square roots of $m$ modulo $r$. Now using the Chinese remainder theorem, for general moduli $r$, the number of modular square roots of any $m\bmod{r}$ is bounded by $O_{\varepsilon}\left(r^{\varepsilon}(m,r)^{1/2}\right)$. It follows that the number of modular square roots modulo $r$ of all $m$ in the range $1\le m\le R$ is bounded by
$$
\ll r^{\varepsilon}\sum\limits_{1\le m\le R} (m,r)^{1/2}\ll Rr^{2\varepsilon},
$$  
where we use Lemma \ref{gcdsums} below. This is only slightly larger than $R$. So even if $r$ is not a prime, we may still expect the bounds \eqref{2expect} and \eqref{4expect} to hold. Thus, we propose the following hypotheses. 

\begin{Hypothesis} \label{H1}  Let $r$ be a natural number. Then the bound \eqref{2expect} holds for every $j\in \mathbb{Z}$ coprime to $r$ and any integer $R\le r$.
\end{Hypothesis}

\begin{Hypothesis}  \label{H2} Let $r$ be a natural number. Then the bound \eqref{4expect} holds for every $j\in \mathbb{Z}$ coprime to $r$ and any integer $R\le r$.
\end{Hypothesis}

Here it is essential that we have restricted the $m_i$'s to the interval $1\le m_i\le R$, i.e., we have excluded the possibility that $m_i=0$. If we would include $m_i=0$, then the above hypotheses were not true - we would get extra terms of sizes $r^2$ and $r^4$, respectively.    

We note that whereas Theorem \ref{E2theo} establishes Hypothesis \ref{H1} for an amazingly large range $R\le r^{1/3}$ in the case of prime moduli $r$, Theorem \ref{E4theo} is quite far away from Hypothesis \ref{H2}. However, these results in \cite{KSSZ} are very recent, so there may be space for improvements. 

Since it will be crucial to use Weyl differencing in this paper, we will also face additive energies of differences of modular square roots of the form
$$
f_{j,h}(m):=\sqrt{j(m+h)}-\sqrt{jm} \bmod{r},
$$    
if existent, where $h$ is a non-zero integer. Again, by abuse of notation, $f_{j,h}(m)$ really denotes the {\it collection} of differences between all possible modular square roots of $j(m+h)$ and $jm$. So we define 
\begin{equation} \label{F2def}
F_2(R;j,h,r):=\sum\limits_{\substack{1\le m_1,m_2,m_3,m_4\le R\\ f_{j,h}(m_1)+f_{j,h}(m_2)\equiv f_{j,h}(m_3)+f_{j,h}(m_4)\bmod r}} 1 = \sum\limits_{\substack{(k_1,\tilde{k}_1,k_2,\tilde{k}_2,k_3,\tilde{k}_3,k_4,\tilde{k}_4)\bmod r\\ (\tilde{k}_1-k_1)+(\tilde{k}_2-k_2)\equiv (\tilde{k}_3-k_3)+(\tilde{k}_4-k_4)\bmod{r}\\ \tilde{k}_i^2\equiv j(m_i+h) \text{ and } k_i^2\equiv jm_i \text{ for some } m_i\in \mathbb{N}\cap [1,R]}} 1.
\end{equation}
On the basis of similar heuristic arguments as above, we conjecture the following. 

 \begin{Hypothesis} \label{H3}  Let $r$ be a natural number. Then the bound 
\begin{equation} \label{3expect}
F_2(R;j,h,r)\ll \left((h,r)\cdot \frac{R^{4}}{r}+R^2\right)r^{\varepsilon}
\end{equation}
holds for every $j\in \mathbb{Z}$ coprime to $r$ and any integer $R\le r$.
\end{Hypothesis}

In the following, we indicate why we include an extra factor of $(h,r)$ in \eqref{3expect}. If $r$ is odd, then by the considerations at the beginning of the appendix (section 14), for any given $b \bmod{r}$ with $(b,r)=d$, the system of congruences  
\begin{equation*}
\begin{cases}
b\equiv \tilde{k}-k\bmod{r}\\
k^2\equiv ja\bmod{r}\\ \tilde{k}^2 \equiv j(a+h)\bmod{r}
\end{cases}
\end{equation*}
is solvable for $k,\tilde{k},a$ if and only if $d|h$. In this case, by the said considerations in the appendix, making a change of variables $c=\tilde{k}+k$, this system reduces to a single congruence of the form
$$
c\equiv \overline{b_1}jh_1\bmod{r_1} \quad \mbox{with } b_1=\frac{b}{d}, h_1=\frac{h}{d} \mbox{ and } r_1=\frac{r}{d},
$$
and we therefore get exactly $d=r/r_1$ solutions $(m_1,m_2,a) \bmod{r}$. In particular, if $(b,r)=d=(h,r)$, then the modulus $r$ reduces to $r_1=r/(h,r)$, which is the least possible modulus of the above congruence. We therefore need to replace the quotient $R^4/r$ in our heuristic by $(h,r)R^4/r$ because the counting problem is really to be taken modulo $r/(h,r)$. In particular, if $h=0$, we get a term of size $R^4$, which is the correct order of magnitude of $F_2(R;j,h,r)$ in this case because $f_{j,h}(m)\equiv 0\bmod{r}$ for all $m \bmod{r}$. If $r$ is even, then the arguments are similar. 

To the author's knowledge, there is no unconditional result on the additive energy $F_2(R;j,h,r)$ in the literature. 
Our main result in this article is the following.

\begin{Theorem} \label{mainresult} Let $Q\in \mathbb{N}$, $M\in \mathbb{Z}$, $N=Q^3$ and $a_n$ a sequence of complex numbers. Suppose that Hypotheses \ref{H2} and \ref{H3} hold. Then
\begin{equation*}
\sum\limits_{q\le Q}\sum\limits_{\substack{a=1\\ (a,q)=1}}^{q^2} \left|\sum\limits_{M<n\le M+N} a_ne\left(\frac{na}{q^2}\right)\right|^2\ll Q^{7/2-1/135}\sum\limits_{M<n\le M+N} |a_n|^2.
\end{equation*}
\end{Theorem}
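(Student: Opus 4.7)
The plan is to attack this via the standard large sieve duality, reducing the desired bound to a count of near-coincidences of Farey fractions with square denominators. After dualising, one needs to bound $\sum_n \bigl|\sum_{q\le Q}\sum_a c_{q,a} e(na/q^2)\bigr|^2$ in terms of $\sum|c_{q,a}|^2$ for $n\in(M,M+N]$. Using a standard smooth majorant of the indicator of $(M,M+N]$ with Fourier transform concentrated on $|t|\ll 1/N$, the off-diagonal contribution reduces to a weighted sum over pairs $(q_1,a_1,q_2,a_2)$ with $|a_1/q_1^2-a_2/q_2^2|\ll 1/N$, i.e.\ $|a_1q_2^2-a_2q_1^2|\ll q_1^2q_2^2/N$. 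Writing $\ell:=a_1q_2^2-a_2q_1^2$ and, after a dyadic reduction in $d=(q_1,q_2)$, passing to the coprime case, the congruence $a_1q_2^2\equiv\ell\bmod q_1^2$ forces $q_2^2\equiv\ell\overline{a_1}\bmod q_1^2$, so $q_2$ is precisely a modular square root of $\ell\overline{a_1}$ modulo $q_1^2$. This is the route through which the objects of Section~2 enter.

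The next step is to exploit this modular square root structure through an eighth-moment argument. Applying Hölder's inequality of order $8$ to the $q_2$-sum, together with a dyadic decomposition in $|\ell|\sim R$, reduces the relevant count to a quantity of the form
\begin{equation*}
\#\Bigl\{(k_1,\ldots,k_8)\bmod q_1^2:\ k_1+\cdots+k_4\equiv k_5+\cdots+k_8\bmod q_1^2,\ k_i^2\equiv jm_i\bmod q_1^2,\ 1\le m_i\le R\Bigr\}
\end{equation*}
with $j=\overline{a_1}$, which is exactly $E_4(R;j,q_1^2)$; invoking Hypothesis~\ref{H2} then yields a bound strictly stronger than the Cauchy--Schwarz-from-$E_2$ estimate underlying \cite{BaiZhao1}.

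A direct use of $E_4$ alone, however, still does not break the $Q^{7/2}$ barrier, because the diagonal term $R^4$ in \eqref{4expect} saturates in the critical range $N=Q^3$. The extra saving will come from Weyl differencing in the variable $m$: replace $\sqrt{jm}$ by the difference $f_{j,h}(m)=\sqrt{j(m+h)}-\sqrt{jm}$ for shifts $|h|\sim H$ in a suitable dyadic range, then bound the resulting fourth moment using Hypothesis~\ref{H3} on $F_2(R;j,h,q_1^2)$. The factor $(h,q_1^2)$ in \eqref{3expect} is precisely what shrinks the would-be diagonal contribution below $R^2$ on average over $h$, producing a genuine saving beyond $E_4$. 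One then decomposes dyadically in $d$, $R$, and $H$, and balances the three resulting exponents.

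The main obstacle is this multi-parameter optimisation: the small exponent $1/135$ arises as the solution of a linear programming problem with several boundary constraints coming from the $R^8/q_1^2$, $R^4$, $(h,q_1^2)R^4/q_1^2$, and $R^2$ terms in the two hypotheses, together with the summation in $\ell$ and $h$. A secondary complication is that Hypotheses~\ref{H2} and~\ref{H3} have to be applied for composite moduli $r=q_1^2$; the enlarged number of modular square roots costs only $Q^{\varepsilon}$ via the GCD-sum estimate used heuristically in Section~2, but this must be tracked carefully through every step. The non-coprime case $d>1$ is reduced to the coprime one by the standard change of variables $q_i=dr_i$, with the contribution of large $d$ being of lower order.
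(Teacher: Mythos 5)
Your proposal takes a route that the paper explicitly identifies as a dead end. After dualizing, you reduce to counting near-coincidences $|a_1/q_1^2 - a_2/q_2^2|\ll 1/N$ and interpret $q_2$ as a modular square root \emph{modulo $q_1^2\asymp Q^2$}. The paper's Remark at the end of Section~7.1 addresses exactly this: if one bypasses Diophantine approximation and works with modular square roots modulo $q_0^2$ directly, ``calculations show that in this case, our hypotheses on additive energies just lead us to a bound of $\ll Q^{1/2}$ for the said number of Farey fractions if $Q^3=N$, and therefore, no progress has been made along these lines.'' The missing idea is Wolke's device: approximate the target real number $x$ by a reduced fraction $b/r$ with $r\le\sqrt N=Q^{3/2}$ via the Dirichlet approximation theorem, and then recast the problem as counting squares in short segments of residue classes modulo this \emph{much smaller} $r$. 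Only after that reduction do the energy hypotheses on modular square roots (now modulo $r$, with $R\asymp Q^2 rz$, $R$ roughly $r^{1/3}$ in the critical regime) produce a genuine power saving over the trivial count. Working modulo $q_1^2$, the ratio of range to modulus is too unfavorable and the diagonal term $R^4$ in \eqref{4expect} swallows the saving, precisely as the paper warns.

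There is a second, structural gap: even after the Dirichlet reduction, the paper must split into four regimes of $r$ and treat each with a different tool. Your combination of an eighth moment ($E_4$ via Hypothesis~\ref{H2}) and Weyl differencing ($F_2$ via Hypothesis~\ref{H3}) covers the analogues of the large and medium $r$ ranges, but you say nothing about the small and very small $r$ ranges. For small $r$ the paper completes the $m$-sum modulo $r$ via Poisson summation and needs the explicit bound of Lemma~\ref{expsumbound} on the complete exponential sum $\mathcal{E}_{j,h}(l,n)$, proved in the Appendix via Bombieri and Cochrane--Zheng. For very small $r$ the paper \emph{uses}, rather than removes, the oscillation of the archimedean phase, invoking the van der Corput type estimate Proposition~\ref{analyticexpsumbound}. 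Both of these regimes are essential for the final optimization that yields the exponent $133/270 = 1/2 - 1/135$; omitting them, the argument is incomplete even if the starting point were fixed. Finally, the $1/135$ does not arise from the linear programming problem you describe in the dual variables: it comes from the transition thresholds between the four $r$-regimes in Proposition~\ref{Propmain}, specifically the crossing $r\asymp Q^{31/54}$ between the medium and small $r$ estimates in the case $\Delta = Q^{-3}$.
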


This saves a factor of $Q^{1/135}$ over all known bounds if $N=Q^3$. Our method also gives a conditional improvement of the best known bound \eqref{tildeDelta'} if $\log_Q N$ is in some neighborhood of $3$, but we have not worked out the details. One can infer such a result from the later Proposition \ref{Propmain}, but the calculations are complicated.    \\ \\
{\bf Acknowledgements.} The author wishes to thank the anonymous referee for carefully going through this article and the Ramakrishna Mission Vivekananda Educational and Research Institute for providing excellent working conditions.  
  
\section{Preliminaries from the theory of exponential sums}
The following preliminaries are used in this article.

To reduce the sizes of amplitude functions, we will employ the following version of Weyl differencing for weighted exponential sums.

\begin{Proposition}[Weyl differencing] \label{Weyldif}
Let $I=(a,b]$ be an interval of length $|I|=b-a\ge 1$. Let $f: I \rightarrow \mathbb{R}$ and $\Phi:\mathbb{R}\rightarrow \mathbb{C}$ be functions, where $|\Phi(x)|\le 1$ for all $x\in \mathbb{R}$. Then for every $H\in \mathbb{R}$ with $1\le H\le |I|$, we have the bound
$$
\left|\sum\limits_{n\in I} \Phi(n) e(f(n))\right|^2\ll \frac{|I|^2}{H}+\frac{|I|}{H}\cdot \sum\limits_{1\le h\le H} \left|\sum\limits_{n\in I_h} \Phi(n) \Phi(n+h) e(f(n+h)-f(n))\right|,
$$ 
where $I_h:=(a,b-h]$. 
\end{Proposition}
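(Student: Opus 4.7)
The plan is to apply the classical shift-and-Cauchy--Schwarz maneuver. Writing $H' = \lfloor H \rfloor \ge 1$ and $S = \sum_{n \in I} \Phi(n) e(f(n))$, I would first express $H' S$ as a double sum over shifts, $H' S = \sum_{h=0}^{H'-1} \sum_{n\in I} \Phi(n) e(f(n))$. Under the substitution $n' = n + h$ in the inner sum (so $n = n' - h$), this becomes $\sum_{n' \in I + h} \Phi(n' - h) e(f(n' - h))$. Swapping the order of summation yields
$$H' S = \sum_{n' \in J} T(n'), \qquad T(n') := \sum_{h=0}^{H'-1} \Phi(n' - h)\, e(f(n' - h))\, \mathbf{1}[n' - h \in I],$$
where $J = \bigcup_{h=0}^{H'-1}(I + h)$ is an interval of length at most $|I| + H' - 1 \le 2|I|$.

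Next I would apply Cauchy--Schwarz in the outer sum to get $(H')^2 |S|^2 \le 2|I| \sum_{n' \in J} |T(n')|^2$. Expanding $|T(n')|^2$ as a double sum over $(h_1, h_2) \in \{0,\dots,H'-1\}^2$, interchanging this with the sum over $n'$, and then setting $n := n' - h_2$ and $h := h_2 - h_1$, the result reorganizes as a sum over $h$ with $|h| < H'$. The diagonal $h = 0$ contributes at most $H' \cdot |I|$ since $|\Phi| \le 1$. For each fixed nonzero $h$ with $|h| < H'$, there are at most $H'$ pairs $(h_1, h_2)$ with $h_2 - h_1 = h$, and the contributions for $h < 0$ are the complex conjugates of those for $h > 0$. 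Taking absolute values and applying the triangle inequality, the off-diagonal terms contribute
$$\ll H' \sum_{1 \le h < H'} \left|\sum_{n \in I_h} \Phi(n+h)\, \overline{\Phi(n)}\, e(f(n+h) - f(n))\right|.$$
Dividing through by $(H')^2$ and noting that $H \asymp H'$ (using $1 \le H \le |I|$ to absorb boundary losses into the implied constant) yields the claimed bound.

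The proof is a textbook manipulation with no essential obstacle. The only points demanding a little care are the bookkeeping on the boundary---the indicator constraints $n' - h_1, n' - h_2 \in I$ after the change of variables translate into $n \in I$ and $n + h \in I$, i.e., $n \in I_h$---and the reconciliation of the notation $\Phi(n)\Phi(n+h)$ appearing in the stated inequality with the product $\Phi(n+h)\overline{\Phi(n)}$ that naturally emerges from expanding $|T(n')|^2$. Since the inner sum sits inside an absolute value and $|\Phi| \le 1$ everywhere, this distinction is immaterial for the quantitative estimate.
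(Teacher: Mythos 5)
Your argument is the standard shift--Cauchy--Schwarz derivation of van der Corput's inequality, i.e.\ precisely the proof of \cite[Lemma 2.5]{GrKo}, which is all the paper itself invokes; the bookkeeping (range $J$ of length $\le 2|I|$, diagonal $h=0$ giving $\ll |I|^2/H$, at most $H'$ pairs $(h_1,h_2)$ per nonzero difference $h$, and the reduction from $|h|<H'$ to $1\le h\le H$ by conjugate symmetry) is all correct. The one flaw is your closing remark: the claim that replacing $\overline{\Phi(n)}\,\Phi(n+h)$ by $\Phi(n)\Phi(n+h)$ is ``immaterial'' because the inner sum sits inside an absolute value is not valid for complex $\Phi$ --- one cannot exchange a factor for its conjugate inside $\bigl|\sum_n(\cdots)\bigr|$ without changing the value (take $\Phi(n)=e(g(n))$ and $f=-g$: the left-hand side is $|I|^2$ while the conjugate-free inner sums become $\sum_n e(2g(n))$, which can be $o(|I|)$). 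What your argument actually proves is the inequality with $\overline{\Phi(n)}\,\Phi(n+h)$, which is the correct general statement (and the one in \cite{GrKo}); the version without the conjugate, as printed in the Proposition, is legitimate only because every weight $\Phi$ (namely $V_h$, built from the real-valued $\Phi_5$ with $0\le\Phi_5\le 1$) to which it is applied in this paper is real, so $\overline{\Phi}=\Phi$. You should either state the conclusion with the conjugate or note the reality of $\Phi$ explicitly rather than appeal to the absolute value.
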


\begin{proof}
This is a consequence of \cite[Lemma 2.5.]{GrKo}.
\end{proof} 

For a convenient application of the Poisson summation formula, we will use smooth weight functions $\Phi:\mathbb{R} \rightarrow \mathbb{C}$ which fall into the Schwartz class (for details on the Schwartz class, see \cite{StSh}). For a Schwartz class function as above, we define its Fourier transform $\hat{\Phi}:\mathbb{R} \rightarrow \mathbb{C}$ as 
$$
\hat{\Phi}(y):=\int\limits_{\mathbb{R}} \Phi(x)e(-xy){\rm d}x.
$$ 
Below is a version of the Poisson summation formula in which the sum on the left-hand side runs over a residue class.

\begin{Proposition}[Poisson summation] \label{Poisum} Let $\Phi:\mathbb{R}\rightarrow \mathbb{C}$ be a Schwartz class function. Let $L\ge 1$, $M\in \mathbb{R}$, $r\in \mathbb{N}$ and $a\in \mathbb{Z}$. Then
$$
\sum\limits_{n\equiv a\bmod{r}} \Phi\left(\frac{n-M}{L}\right)=\frac{L}{r} \sum\limits_{n\in \mathbb{Z}} \hat{\Phi}\left(\frac{nL}{r}\right) e_r\left(n(a-M)\right). 
$$
\end{Proposition}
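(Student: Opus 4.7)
The plan is to reduce to the classical Poisson summation formula on $\mathbb{Z}$ by parametrizing the arithmetic progression explicitly. Every integer $n$ with $n\equiv a\bmod r$ can be written uniquely as $n=a+rk$ with $k\in\mathbb{Z}$, so the sum on the left equals
$$
\sum_{k\in\mathbb{Z}} g(k), \qquad \text{where } g(k):=\Phi\!\left(\frac{a+rk-M}{L}\right).
$$
Since $\Phi$ is Schwartz and $g$ is obtained from $\Phi$ by an affine change of variable on $\mathbb{R}$, the function $g$ (viewed as a function on $\mathbb{R}$) is also Schwartz. Therefore the standard Poisson summation formula $\sum_{k\in\mathbb{Z}} g(k) = \sum_{m\in\mathbb{Z}} \hat g(m)$ applies; I would take this as a known fact from \cite{StSh} rather than reprove it.

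Next I would compute $\hat g(m)$ by hand. Writing
$$
\hat g(m) = \int_{\mathbb{R}} \Phi\!\left(\frac{a+rx-M}{L}\right) e(-mx)\,{\rm d}x,
$$
the substitution $y=(a+rx-M)/L$, i.e.\ $x=(Ly+M-a)/r$ with ${\rm d}x=(L/r)\,{\rm d}y$, converts the exponential into
$$
e(-mx) = e\!\left(-\frac{mLy}{r}\right) e\!\left(-\frac{m(M-a)}{r}\right) = e\!\left(-\frac{mLy}{r}\right) e_r\!\bigl(m(a-M)\bigr),
$$
where the second factor is independent of $y$ and can be pulled out of the integral. What remains is $(L/r)\int_{\mathbb{R}} \Phi(y)\,e(-(mL/r)y)\,{\rm d}y = (L/r)\hat\Phi(mL/r)$, giving
$$
\hat g(m) = \frac{L}{r}\,\hat\Phi\!\left(\frac{mL}{r}\right) e_r\!\bigl(m(a-M)\bigr).
$$
Summing over $m\in\mathbb{Z}$ yields exactly the right-hand side of the claimed identity.

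There is no serious obstacle here: the result is an immediate rescaling of the classical Poisson formula. The only points to be careful about are (i) verifying that the Schwartz condition is preserved by the affine change $k\mapsto (a+rk-M)/L$ so that the classical formula applies (both sums and the Fourier-series inversion converge absolutely), and (ii) tracking the normalization factor $L/r$ and the phase $e_r(m(a-M))$ correctly through the substitution. Both steps are routine, and together they give the proposition.
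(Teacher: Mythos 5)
Your argument is correct and is exactly the paper's proof: the paper likewise derives the identity by a linear change of variables from the basic Poisson summation formula $\sum_{n}F(n)=\sum_{n}\hat F(n)$ for Schwartz functions, citing \cite{StSh}; you have simply written out the parametrization $n=a+rk$ and the computation of $\hat g(m)$ explicitly. The substitution, the factor $L/r$, and the phase $e_r(m(a-M))$ all check out.
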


\begin{proof} This arises by a linear change of variables from the well-known basic version of the Poisson summation formula which states that
$$
\sum\limits_{n\in \mathbb{Z}} F(n)=\sum\limits_{n\in \mathbb{Z}} \hat{F}(n)
$$
for any Schwartz class function $F:\mathbb{R}\rightarrow \mathbb{C}$ (see \cite{StSh}).
\end{proof}

In certain cases, we will estimate exponential sums using the following result.

\begin{Proposition} \label{analyticexpsumbound} Let $I=(a,b]$ be an interval of length $|I|=b-a\ge 1$ and $k$ be a positive integer. Suppose that $f: I \rightarrow \mathbb{R}$ is a function with $k+2$ continuous derivatives on $I$. Suppose also that there exist $\lambda>0$ and $\alpha\ge 1$  satisfying
$$
\lambda\le |f^{(k+2)}(x)|\le \alpha\lambda 
$$
for all $x\in I$. Set $K:=2^k$. Then we have the bound
$$
\sum\limits_{n\in I} e(f(n))\ll |I|(\alpha^2\lambda)^{1/(4K-2)}+|I|^{1-1/(2K)}\alpha^{1/(2K)}+|I|^{1-2/K+1/K^2}\lambda^{-1/(2K)}.
$$
\end{Proposition}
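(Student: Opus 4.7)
The plan is to prove this by induction on $k$, which is the standard structure of van der Corput's $(k+2)$-nd derivative test. I take as base case $k=0$, $K=1$, where the claimed inequality reduces to the classical second derivative test $\sum_{n\in I}e(f(n))\ll |I|(\alpha^2\lambda)^{1/2}+|I|^{1/2}\alpha^{1/2}+\lambda^{-1/2}$, obtained from Kusmin--Landau together with Euler summation after splitting $I$ into $O(1)$ subintervals where $f''$ has constant sign; the $k=1$ statement of the proposition then arises as the first inductive step.

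For the inductive step from $k-1$ to $k$ (so $K=2K'$ with $K'=2^{k-1}$), I apply Proposition~\ref{Weyldif} with $\Phi\equiv 1$ and a parameter $H\in[1,|I|]$ to obtain
$$\Bigl|\sum_{n\in I}e(f(n))\Bigr|^2\ll\frac{|I|^2}{H}+\frac{|I|}{H}\sum_{h=1}^{H}|S_h|,\qquad S_h:=\sum_{n\in I_h}e\bigl(f(n+h)-f(n)\bigr).$$
After partitioning $I$ into pieces on which $f^{(k+2)}$ has constant sign, the mean value theorem gives $h\lambda\le|(f(\cdot+h)-f(\cdot))^{(k+1)}(x)|\le h\alpha\lambda$, so the inductive hypothesis with $K\mapsto K/2$ and $\lambda\mapsto h\lambda$ bounds each $|S_h|$ by a three-term expression whose $h$-dependences are $h^{1/(2K-2)}$, $h^{0}$ and $h^{-1/K}$ respectively. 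Summing over $1\le h\le H$ and multiplying by $|I|/H$ yields the four-term estimate
$$\Bigl|\sum_{n\in I}e(f(n))\Bigr|^2\ll\frac{|I|^2}{H}+|I|^2(\alpha^2\lambda)^{1/(2K-2)}H^{1/(2K-2)}+|I|^{2-1/K}\alpha^{1/K}+|I|^{2-4/K+4/K^2}\lambda^{-1/K}H^{-1/K}.$$

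The final step is to optimize $H$. The natural choice $H=(\alpha^2\lambda)^{-1/(2K-1)}$ (truncated to $[1,|I|]$) balances the first two terms at $|I|^2(\alpha^2\lambda)^{1/(2K-1)}$, whose square root is the first target term, while the third term already has the form of the squared second target. The main obstacle will be the fourth term: with the above $H$ it equals $|I|^{2-4/K+4/K^2}\lambda^{-1/K}(\alpha^2\lambda)^{1/(K(2K-1))}$, and this is dominated by the squared third target $|I|^{2-4/K+2/K^2}\lambda^{-1/K}$ only when $\alpha^2\lambda\le|I|^{-2(2K-1)/K}$. I therefore anticipate a case split on the relative sizes of $\alpha^2\lambda$, $|I|^{-2(2K-1)/K}$ and $1$: in the small-$\alpha^2\lambda$ regime the above choice handles all four terms at once, in the large-$\alpha^2\lambda$ regime the trivial bound $|S|\le |I|$ is already absorbed by the first target term, and in any intermediate regime one must re-optimize $H$ (for example by enlarging it toward $|I|^{2/K}$) and check term by term that the four-term bound is majorized by the three-term target. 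Carrying out this case analysis cleanly, tracking how the exponents halve under $K\mapsto K/2$ so that after taking square roots one recovers exactly the stated three-term form, is the delicate point of the argument; it is standard in the van der Corput literature, for instance Theorem 2.8 of Graham--Kolesnik.
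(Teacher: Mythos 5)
The paper gives no proof of this proposition beyond the citation ``This is [GrKo, Theorem 2.8]'', so you are in effect reconstructing the proof of the cited result. Your outline is the standard and correct one. The inductive step via Proposition~\ref{Weyldif} and the mean value theorem does yield exactly the four-term bound you write down: the $h$-exponents $h^{1/(2K-2)}$, $h^{0}$, $h^{-1/K}$ and the resulting powers of $|I|$ all check out. (Two small remarks: no partition into intervals where $f^{(k+2)}$ has constant sign is needed, since $|f^{(k+2)}|\ge\lambda>0$ and continuity already force constant sign; and the base case is usually run by splitting $I$ according to the values of $f'$ modulo $1$ before applying Kusmin--Landau. In the base case the middle term is indeed redundant, being the geometric mean of the outer two.)

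The gap is that the optimization over $H$ --- which you yourself flag as ``the delicate point'' --- is not carried out, and your proposed fix for the intermediate regime is not the right one. Taking $H=|I|^{2/K}$ does make the fourth term equal to the squared third target, but it pushes the second term $|I|^2(\alpha^2\lambda)^{1/(2K-2)}H^{1/(2K-2)}$ above $|I|^2(\alpha^2\lambda)^{1/(2K-1)}$ precisely in the regime $\alpha^2\lambda>|I|^{-2(2K-1)/K}$ you are trying to handle. The correct move there is to balance the second and fourth terms: at their crossing point the powers of $\lambda$ cancel and the common value is
$$
|I|^{(6K-8+4/K)/(3K-2)}\,\alpha^{2/(3K-2)}\;\le\;|I|^{2-1/K}\alpha^{1/K}\qquad(K\ge 2),
$$
i.e.\ exactly the square of the middle target term --- which is why that term, superfluous for $k=0$, appears in the statement at all. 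With this substitution (plus the boundary checks $1\le H\le|I|$ and the trivial bound when $\alpha^2\lambda\ge1$) your case analysis closes. As written, however, the decisive step is deferred to ``Theorem 2.8 of Graham--Kolesnik'', i.e.\ to the very statement being proved, so the proposal is an accurate sketch rather than a complete proof.
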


\begin{proof}
This is \cite[Theorem 2.8.]{GrKo}.
\end{proof}

To estimate certain bilinear exponential sums coming up in our method, we use the double large sieve. 

\begin{Proposition}[Double large sieve] \label{doublelargesieve} Let $\alpha_1,...,\alpha_K$ and $\beta_1,...,\beta_L$ be real numbers satisfying $|\alpha_k|\le A$ and $|\beta_l|\le B$ for $1\le k\le K$ and $1\le l\le L$. Then for any complex numbers $a_1,...,a_K$ and $b_1,...,b_L$, we have
$$
\bigg|\sum\limits_{k=1}^{K} \sum\limits_{l=1}^L a_kb_le\left(\alpha_k\beta_l\right)\bigg|\le 5(AB+1)^{1/2} \bigg(\sum\limits_{\substack{1\le k_1,k_2\le K\\ |\alpha_{k_1}-\alpha_{k_2}|<1/B}} |a_{k_1}a_{k_2}|\bigg)^{1/2} \bigg(\sum\limits_{\substack{1\le l_1,l_2\le L\\ |\beta_{l_1}-\beta_{l_2}|<1/A}} |b_{l_1}b_{l_2}|\bigg)^{1/2}.
$$
\end{Proposition}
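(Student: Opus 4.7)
My plan is to prove Proposition~\ref{doublelargesieve} by the classical Fourier-analytic method of Bombieri and Iwaniec. The argument rests on the construction of an even Schwartz majorant $\phi\colon\mathbb{R}\to[0,\infty)$ whose Fourier transform is also nonnegative and compactly supported, and on using this majorant to dominate $|S|^2$ by a manifestly nonnegative quadratic form which then factorizes under Cauchy--Schwarz into the two near-diagonal energy sums appearing in the conclusion.

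Concretely, I would first fix $\phi$ with $\phi(x)\ge 1$ for $|x|\le 1$, $\hat{\phi}\ge 0$, and $\operatorname{supp}\hat{\phi}\subseteq[-1,1]$. A rescaled Fej\'er-type kernel such as $\phi(x)=\tfrac{\pi^2}{4}\bigl(\sin(\pi x/2)/(\pi x/2)\bigr)^{2}$, whose Fourier transform is a triangle function on $[-1/2,1/2]$, meets these requirements; the explicit numerical constant $5$ in the statement is then calibrated to the Fourier normalization of this choice.

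The main step is to introduce the weighted expression
\[
\Sigma := \sum_{k_1,k_2,l_1,l_2} a_{k_1}\bar a_{k_2}\,b_{l_1}\bar b_{l_2}\,e\bigl(\alpha_{k_1}\beta_{l_1}-\alpha_{k_2}\beta_{l_2}\bigr)\,\phi\!\left(\tfrac{\beta_{l_1}-\beta_{l_2}}{2B}\right)\phi\!\left(\tfrac{\alpha_{k_1}-\alpha_{k_2}}{2A}\right)
\]
and to prove $|S|^2\le\Sigma$. Substituting the Fourier inversion identity $\phi(y/(2B))=2B\!\int\hat{\phi}(2Bu)e(yu)\,du$, and the analogous one with $A$, and interchanging summation with integration rewrites
\[
\Sigma = 4AB\iint \hat{\phi}(2Bu)\,\hat{\phi}(2Av)\,|S_{u,v}|^{2}\,du\,dv, \qquad S_{u,v}:=\sum_{k,l} a_kb_l\,e\bigl((\alpha_k+u)(\beta_l+v)\bigr),
\]
after using $(\alpha_k+u)(\beta_l+v)=\alpha_k\beta_l+\beta_lu+\alpha_kv+uv$ and absorbing the phase $e(uv)$ on both copies of $S_{u,v}$. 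This exhibits $\Sigma\ge 0$; a preparatory Cauchy--Schwarz, together with positivity of $\hat{\phi}$ on its support, then yields the comparison $|S|^2\le\Sigma$.

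With this in hand I would expand $\Sigma$ once more in its original bilinear form and apply Cauchy--Schwarz to separate the $(k_1,k_2)$-factor from the $(l_1,l_2)$-factor. Because $\hat{\phi}$ is supported in $[-1,1]$, the smoothing weight $\phi\bigl((\alpha_{k_1}-\alpha_{k_2})/(2A)\bigr)$ has Fourier content concentrated at scale $1/A$ so that the effective near-diagonal constraint on the $\beta$-pairs is $|\beta_{l_1}-\beta_{l_2}|<1/A$, and, symmetrically, the constraint on the $\alpha$-pairs is $|\alpha_{k_1}-\alpha_{k_2}|<1/B$. The $(AB+1)^{1/2}$ prefactor arises from the normalization $4AB$ in the Fourier representation of $\Sigma$, with the $+1$ handling the degenerate regime $AB<1$. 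The main technical obstacle is the legitimate derivation of $|S|^2\le\Sigma$: a naive insertion of $\phi\cdot\phi\ge 1$ into the complex sum $|S|^2$ is not valid, so the comparison must be extracted from the Fourier-integral representation together with positivity of $\hat{\phi}$; tracking all numerical constants down to exactly $5$ is then routine but delicate bookkeeping of the Fourier normalizations.
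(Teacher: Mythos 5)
Your proposal has a genuine gap, in fact two, both located at the heart of the argument. First, the pivotal comparison $|S|^2\le\Sigma$ is never actually proved, and it does not follow from the properties you impose on $\phi$. Your Fourier representation $\Sigma=4AB\iint\hat{\phi}(2Bu)\hat{\phi}(2Av)|S_{u,v}|^2\,du\,dv$ is correct and shows $\Sigma\ge 0$, but $|S|^2=|S_{0,0}|^2$ is the \emph{pointwise} value of the band-limited function $(u,v)\mapsto S_{u,v}$, while $\Sigma$ is a weighted local average of $|S_{u,v}|^2$ over a box of size $\asymp 1/B\times 1/A$ with bounded total weight. Bounding a point value by such a local average is a reverse-type inequality for exponential sums with frequencies in $[-B,B]\times[-A,A]$; it is equivalent to the kernels $\phi\bigl((\beta_{l_1}-\beta_{l_2})/(2B)\bigr)-1$ and $\phi\bigl((\alpha_{k_1}-\alpha_{k_2})/(2A)\bigr)-1$ being positive semidefinite on point sets of $[-B,B]$, resp.\ $[-A,A]$, which is a nontrivial extremal statement. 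The majorant property $\phi\ge 1$ on $[-1,1]$ only helps when the coefficients multiplying $\phi$ are nonnegative, which they are not here (they are $a_{k_1}\bar a_{k_2}b_{l_1}\bar b_{l_2}e(\cdots)$), and "positivity of $\hat\phi$ plus a preparatory Cauchy--Schwarz" does not supply the missing step. In the genuine proofs this passage is done by a Gallagher/Sobolev-type sampling inequality (fundamental theorem of calculus plus Bernstein-type control of $H'$), not by positivity alone.

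Second, even if $|S|^2\le\Sigma$ were granted, your $\Sigma$ cannot yield the stated bound, because the scales are mismatched: the weight on the $\beta$-differences is $\phi\bigl((\beta_{l_1}-\beta_{l_2})/(2B)\bigr)$, which is $\asymp 1$ on the \emph{entire} range $|\beta_{l_1}-\beta_{l_2}|\le 2B$ (and likewise for the $\alpha$-weight on $|\alpha_{k_1}-\alpha_{k_2}|\le 2A$), so taking absolute values in $\Sigma$ gives only the trivial bound $\bigl(\sum_k|a_k|\bigr)^2\bigl(\sum_l|b_l|\bigr)^2$. The near-diagonal conditions $|\beta_{l_1}-\beta_{l_2}|<1/A$ and $|\alpha_{k_1}-\alpha_{k_2}|<1/B$ require a \emph{cross}-coupling of scales: the $\beta$-differences must be tested against a function whose Fourier transform is supported at scale $1/A$, which in the Bombieri--Iwaniec argument (this is the proof behind the cited Theorem 7.2 of Iwaniec--Kowalski; the paper itself gives no proof but only this reference) arises from a nonnegative majorant of the indicator of the $\alpha$-range $[-A,A]$, inserted after a first Cauchy--Schwarz with overlap-counting weights in the other variable. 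Your claim that the constraint on the $\beta$-pairs "emerges" from the weight $\phi\bigl((\alpha_{k_1}-\alpha_{k_2})/(2A)\bigr)$ is not a mechanism: that weight depends only on the $\alpha$'s, and no application of Cauchy--Schwarz to $\Sigma$ as defined transfers it to the $\beta$'s. (Minor further points: your $\phi$ is not Schwartz, only of quadratic decay, and is not compactly supported, so the weights never literally vanish off any diagonal; this is harmless in the correct argument but compounds the problem in yours.)
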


\begin{proof} This is \cite[Theorem 7.2.]{IwKo}.\end{proof}

We also need to bound complete exponential sums over $\mathbb{Z}/r\mathbb{Z}$ with rational function entries.
We will divide them into exponential sums over $\mathbb{Z}/p^m\mathbb{Z}$, where $p$  is a prime and $p^m$ is its largest power dividing $r$. Then we consider two cases: $m=1$ and $m>1$. For the case $m=1$, we use the following bound which is a consequence of a result of Bombieri \cite{Bom}.

\begin{Proposition}[Bombieri] \label{primemoduli} Let $p$ be a prime and $f=f_1/f_2$ be a nonconstant rational function, where $f_1,f_2\in \mathbb{Z}[X]$. Let $d_p(f_1)$ and $d_p(f_2)$ be the degrees of $f_1$ and $f_2$ over $\mathbb{Z}/p\mathbb{Z}$, respectively. Let $d_p(f):=d_p(f_1)+d_p(f_2)$ be the total degree of $f$ over $\mathbb{Z}/p\mathbb{Z}$. Set 
$$
S(f,p):=\sum\limits_{\substack{n\bmod{p}\\ f_2(n)\not\equiv 0 \bmod{p}}} e_{p}\left(f(n)\right).
$$
Then 
$$
|S(f,p)|\le 2d_p(f)p^{1/2}.
$$
\end{Proposition}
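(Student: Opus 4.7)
The bound is the Bombieri--Weil estimate for complete exponential sums with rational-function entries, and the plan is to deduce it from the Riemann hypothesis for curves over finite fields via the Artin--Schreier construction attached to $f$, much as in Bombieri's original 1966 paper. Specifically, I associate to the additive character $\psi(u) = e_p(u)$ and the rational function $f = f_1/f_2$ the rank-one $\ell$-adic Artin--Schreier sheaf $\mathcal{L}_\psi(f)$ on $U = \mathbb{A}^1_{\mathbb{F}_p} \setminus \{f_2 = 0\}$, realised geometrically as the $\psi$-isotypic piece of the pushforward from the $\mathbb{Z}/p\mathbb{Z}$-cover $C: y^p - y = f(x) \to U$.

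The first step is the Grothendieck--Lefschetz trace formula
\[
S(f, p) \;=\; -\sum_{i} (-1)^i \operatorname{tr}\!\Big(\operatorname{Frob}_p \;\Big|\; H^i_c(U_{\overline{\mathbb{F}}_p}, \mathcal{L}_\psi(f))\Big).
\]
Since $\psi\circ f$ is nontrivial on $U$, the groups $H^0_c$ and $H^2_c$ vanish, leaving only $H^1_c$. By Deligne's purity theorem (Weil~II), every Frobenius eigenvalue on this $H^1_c$ has absolute value $p^{1/2}$, so $|S(f,p)| \le \dim H^1_c \cdot p^{1/2}$ and the problem reduces to an upper bound on $\dim H^1_c$.

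The second step is an Euler--Poincar\'e calculation. For the rank-one sheaf $\mathcal{L}_\psi(f)$ on $U$, the Euler characteristic formula for the smooth projective completion $\mathbb{P}^1 = U \sqcup S$ reads
\[
\dim H^1_c \;=\; (\#S - 2) + \sum_{x \in S} \operatorname{Sw}_x(\mathcal{L}_\psi(f)),
\]
where $S$ consists of the zeros of $f_2$ together with $\infty$. At a pole of $f$ of order $m$ with $\gcd(m,p)=1$, the Swan conductor equals $m$. The total pole divisor of $f$ on $\mathbb{P}^1$ has degree $\max(d_p(f_1),d_p(f_2)) \le d_p(f)$, and $\#S \le d_p(f_2) + 1$. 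Summing, one obtains $\dim H^1_c \le 2d_p(f) - 1$, and hence the claimed bound $|S(f,p)| \le 2d_p(f)\,p^{1/2}$ with a little slack.

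The principal technical obstacle is handling poles whose order is divisible by $p$, where the naive Swan conductor computation fails because the defining Artin--Schreier equation $y^p - y = f(x)$ is not in reduced form. The standard remedy is to perform a local substitution $y \mapsto y + R_P(x)$, with $R_P$ chosen so that $R_P^{\,p} - R_P$ annihilates the $p$-th-power portion of the principal part of $f$ at each such $P$; the Hasse--Arf theorem then yields the correct Swan conductor for the reduced equation. The combinatorial bookkeeping required to verify that the global estimate $\dim H^1_c \le 2d_p(f) - 1$ survives these local adjustments is the delicate heart of Bombieri's argument, although in the regime $d_p(f) < p$ (relevant to our applications) every pole order is automatically coprime to $p$ and this difficulty does not arise.
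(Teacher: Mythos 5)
Your proposal is sound, but it is worth saying up front that the paper offers no proof at all here: it simply cites inequality (1.4) of Cochrane--Liu--Zheng, which in turn rests on Bombieri's 1966 theorem. You have therefore gone one level deeper and sketched the actual proof of the cited result. Your route --- Grothendieck--Lefschetz trace formula for the Artin--Schreier sheaf $\mathcal{L}_\psi(f)$, Weil II purity on $H^1_c$, and the Grothendieck--Ogg--Shafarevich count $\dim H^1_c=(\#S-2)+\sum_{x\in S}\mathrm{Sw}_x(\mathcal{L}_\psi(f))\le (d_p(f_2)-1)+\max(d_p(f_1),d_p(f_2))\le 2d_p(f)-1$ --- is the modern packaging of exactly the curve $y^p-y=f(x)$ that Bombieri handles with Weil's Riemann hypothesis, and the arithmetic checks out. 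One step needs more care than you give it: the vanishing of $H^2_c$ does not follow from ``$\psi\circ f$ is nontrivial on $U$'' as a function of $\mathbb{F}_p$-points; it requires the sheaf to be \emph{geometrically} nontrivial, i.e.\ $f\not\equiv g^p-g+c\pmod p$ for any rational $g$ and constant $c$. In that degenerate case the sum genuinely has size about $p$ and the cohomological argument as written breaks down; the proposition is rescued only because such an $f$ has polar degree at least $p$ on $\mathbb{P}^1$, so $d_p(f)\ge p$ and the claimed bound $2d_p(f)p^{1/2}\ge 2p^{3/2}>p$ holds trivially. This exceptional case should be split off explicitly; note it is the same phenomenon as the wild-pole reduction you discuss at the end, and in the regime $d_p(f)<p$ (which covers the paper's application, where the degrees are absolute constants) both issues disappear, since a pole of order prime to $p$ forces wild ramification and hence geometric nontriviality. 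Two cosmetic remarks: the trace formula should read $S(f,p)=\sum_i(-1)^i\mathrm{tr}(\mathrm{Frob}_p\mid H^i_c)$ without the extra outer sign (harmless here since only $i=1$ survives), and the paper's statement contains a typo --- the summation should be over $n\bmod p$, not $n\bmod p^m$, as is clear from how the proposition is invoked in the appendix.
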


\begin{proof} This follows from \cite[inequality (1.4)]{Coch}.
\end{proof}

To handle the case $m>1$, we use a result of Cochrane and Zheng below. To formulate this result, we first introduce some notations. For a polynomial $f$ with integer coefficients, we define its order $\text{ord}_p(f)$ modulo $p$ as the largest exponent $k$ such that $p^k$ divides all the coefficients of $f$. If $f=f_1/f_2\in \mathbb{Z}(X)$ with $f_1,f_2\in \mathbb{Z}[X]$ is a rational function over $\mathbb{Z}$, then we define its order modulo $p$ as $\text{ord}_p(f):=\text{ord}_p(f_1)-\text{ord}_p(f_2)$. Set $t:=\text{ord}_p(f')$. A critical point $\alpha$ of $f$ modulo $p$ is a zero of $p^{-t}f'$ over $\mathbb{Z}/p\mathbb{Z}$, i.e. $p^{-t}f'(\alpha)\equiv 0\bmod{p}$. By $\nu_p(f,\alpha)$ we denote its multiplicity. If $\alpha$ is not a critical point of $f$ modulo $p$, then we set $\nu_p(f,\alpha):=0$. 

\begin{Proposition}[Cochrane-Zheng] \label{primepowermoduli}
Let $p$ be a prime and $f=f_1/f_2$ be a nonconstant rational function, where $f_1,f_2\in \mathbb{Z}[X]$. Set $t:=\text{\rm ord}_p(f')$. Suppose that $\alpha\in \mathbb{Z}$ such that $f_2(\alpha)\not\equiv 0\bmod{p}$. Set $\nu:=\nu_p(f,\alpha)$. Let $m$ be a positive integer. Suppose that $m\ge t+2$ if $p$ is odd and $m\ge t+3$ if $p=2$. Set 
$$
S_{\alpha}(f,p^m):=\sum\limits_{\substack{n\bmod{p^m}\\ n\equiv \alpha\bmod{p}}} e_{p^m}\left(f(n)\right).
$$  
Then $S_\alpha(f,p^m)=0$ if $\nu=0$ and 
$$
\left|S_\alpha(f,p^m)\right|\le \nu p^{t/(\nu+1)}p^{m(1-1/(\nu+1))} 
$$
if $\nu\ge 1$. 
\end{Proposition}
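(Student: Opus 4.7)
The plan is to follow the Hensel-type lifting argument originally developed by Cochrane and Zheng, whose paper on exponential sums modulo prime powers establishes exactly this bound. I would organize the argument into three main steps.

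First, I would set up the local Taylor expansion at $\alpha$. Since $p \nmid f_2(\alpha)$, every derivative value $f^{(k)}(\alpha)/k!$ lies in $\mathbb{Z}_p$, so after substituting $n = \alpha + p\eta$ with $\eta$ running modulo $p^{m-1}$ one obtains
$$
f(\alpha + p\eta) \equiv \sum_{k=0}^{K} \frac{(p\eta)^k}{k!} f^{(k)}(\alpha) \pmod{p^m}
$$
for a suitable finite cutoff $K$. The hypothesis $m \geq t+2$ (respectively $m \geq t+3$ when $p = 2$) is exactly what guarantees that the linear-in-$\eta$ term, which carries the information about $p^{-t}f'$, controls the expansion after a further digit split.

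Second, I would isolate the linear contribution by writing $\eta = u + p^{m-1-j} v$ with the parameter $j = \lceil (m-t)/(\nu+1) \rceil$, where $u$ ranges through a transversal modulo $p^{m-1-j}$ and $v$ ranges modulo $p^j$. For this choice of $j$, all terms of degree at least $2$ in $v$ in the Taylor expansion vanish modulo $p^m$, so the inner sum in $v$ collapses to a complete additive character sum modulo $p^j$. Its frequency is a $p$-adic multiple of $(p^{-t} f')(\alpha + pu)$, and so it vanishes unless $(p^{-t} f')(\alpha + pu)$ has sufficiently high $p$-adic valuation; when it does, the inner sum contributes exactly $p^j$.

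Third, I would count the surviving coarse residues $u$ using the multiplicity $\nu$. The condition that $(p^{-t}f')(\alpha + pu)$ has high $p$-adic valuation forces $\alpha + pu$ to be an approximate critical point. Since $\alpha$ is a zero of $p^{-t} f'$ of multiplicity exactly $\nu$ modulo $p$, a Hensel-type counting argument shows that the number of admissible $u$ is at most $\nu$ times an appropriate power of $p$ determined by the lifting depth. Multiplying the count of admissible $u$'s by the $p^j$ from the inner sum and substituting the chosen $j$ produces the claimed bound $\nu p^{t/(\nu+1)} p^{m(1-1/(\nu+1))}$ when $\nu \geq 1$. When $\nu = 0$, no admissible $u$ exists, so $S_\alpha(f, p^m) = 0$ outright.

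The main technical obstacle is the case $p = 2$. The factors $1/k!$ in the Taylor expansion introduce extra $2$-adic denominators that shift the delicate balance between the linear and higher-order terms; this is precisely why the hypothesis must be strengthened from $m \geq t+2$ to $m \geq t+3$ there. Tracking these $2$-adic valuations uniformly in $p$, and verifying that the Hensel-lifting step contributes the factor $\nu$ rather than $\nu+1$, is the most delicate part of the argument; everything else is careful bookkeeping around the choice of $j$.
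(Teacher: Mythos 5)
The paper does not actually prove this proposition; it is quoted verbatim from Cochrane--Liu--Zheng \cite[Theorem 3.1]{Coch}, so any self-contained argument is necessarily ``different from the paper.'' Your sketch does follow the right general method (the Cochrane--Zheng stationary-phase reduction), but as written it contains a quantitative gap that prevents it from reaching the stated exponent.

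The problem is in the interaction between your second and third steps. After the split $n=\alpha+pu+p^{m-j}v$, the inner sum over $v \bmod p^{j}$ forces $p^{\,j-t}\mid (p^{-t}f')(\alpha+pu)$, and you then bound the outer sum by the \emph{number} of admissible $u \bmod p^{m-1-j}$. Counting via the multiplicity-$\nu$ lifting lemma, a solution class modulo $p^{\,j-t}$ lifts to $p^{(m-j)-(j-t)}$ admissible values of $\alpha+pu$ modulo $p^{m-j}$, so the total is at most $\nu p^{(j-t)(1-1/\nu)}p^{\,m-2j+t}$ and the resulting bound is $|S_\alpha|\le \nu p^{\,m-(j-t)/\nu}$. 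To match the target $\nu p^{\,m-(m-t)/(\nu+1)}$ you would need $j=t+\nu(m-t)/(\nu+1)\ge (m+t)/2$, but the truncation of the Taylor expansion after the linear term requires roughly $j\le m/2$; for $\nu\ge 2$ (and already for $\nu=1$ when $t>0$) these are incompatible. With your actual choice $j=\lceil (m-t)/(\nu+1)\rceil$ the exponent you obtain is $m-(m-t(\nu+2))/(\nu(\nu+1))$, e.g.\ $p^{5m/6}$ instead of the required $p^{2m/3}$ when $\nu=2$, $t=0$. The missing idea is that one must not bound the sum over the surviving coarse residues $u$ by its cardinality: those residues cluster over the critical points of $f$ lifted to higher powers of $p$, and the sum over each cluster is again a sum of the form $S_\beta(f,p^{m'})$ with $m'<m$ (after recentering). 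Cochrane--Zheng's proof is an induction on $m$ built on this recursive reduction, and it is this iteration that converts the one-step exponent $1-1/(2\nu)$ into the stated $1-1/(\nu+1)$. Your treatment of the $\nu=0$ case and your remarks on the $2$-adic denominators of $1/k!$ are fine, but the core estimate as sketched would not compile into the claimed bound without the inductive step.
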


\begin{proof} This is found in \cite[Theorem 3.1]{Coch}.
\end{proof}

At some point, we will have to evaluate quadratic Gauss sums. We recall the following result.

\begin{Proposition}[Quadratic Gauss sums]\label{Gausssums}
For $q\in\mathbb{N}$ and $a,b\in \mathbb{Z}$, define
$$
G(q;a,b):=\sum\limits_{n=1}^q e_q\left(an^2+bn\right).
$$
Suppose that $q$ is odd.
Then
$$
G(q;a,b)=0 \quad \mbox{ if } (a,q)\nmid b
$$
and 
$$
G(q;a,b)=\epsilon_q \cdot e_q\left(-\overline{4a}b^2\right)\cdot \left(\frac{a}{q}\right)\cdot \sqrt{q} \quad \mbox{ if } (a,q)=1, 
$$
where $\left(\frac{\cdot}{q}\right)$ is the Jacobi symbol and
$$
\epsilon_q:=\begin{cases} 1 & \mbox{ if } q\equiv 1\bmod{4},\\ i & \mbox{ if } q\equiv 3\bmod{4}.\end{cases}
$$
\end{Proposition}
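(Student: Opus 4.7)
My plan is to combine Poisson summation with Weyl differencing to transform the large sieve sum into counting configurations of modular square roots, then invoke Hypotheses \ref{H2} and \ref{H3} and optimize parameters.

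\emph{Step 1.} Insert a smooth Schwartz weight $\Phi$ on $(M, M+N]$ and apply Poisson summation (Proposition \ref{Poisum}) to the inner sum $\sum_n a_n e(na/q^2)$, split by residues modulo $q^2$. The complete Gauss sum that arises is evaluated using Proposition \ref{Gausssums}, producing a phase of the shape $e_{q^2}(-\overline{4a}\,m^2)$ together with a Jacobi symbol and the usual $\epsilon_q$. Making the change of variable $k\equiv \sqrt{jm}\bmod q^2$ (for a suitable auxiliary $j$) converts the original sum over Farey fractions $a/q^2$ into a sum whose index set consists of modular square roots, placing the problem squarely inside the framework of Section 2.

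\emph{Step 2.} Apply Weyl differencing (Proposition \ref{Weyldif}) in the dual variable $m$ with a parameter $H$. The squared sum splits into a diagonal term of order $N^2/H$ and an off-diagonal piece whose phase involves $\sqrt{j(m+h)}-\sqrt{jm}\bmod q$, i.e.\ the function $f_{j,h}(m)$ from \eqref{F2def}. A further application of Cauchy--Schwarz plus orthogonality converts this off-diagonal contribution into a count of quadruples whose $f_{j,h}$-sums match modulo $q$, which is exactly $F_2(R;j,h,q)$, where $R$ is the effective length after Poisson; Hypothesis \ref{H3} then gives $F_2 \ll ((h,q)R^4/q + R^2)q^\varepsilon$. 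In the complementary range where $F_2$ is not strong enough, I would iterate Cauchy--Schwarz once more to pass to $E_4$ and apply Hypothesis \ref{H2} for $E_4 \ll (R^8/q + R^4) q^\varepsilon$.

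\emph{Step 3.} Sum over $h \in [1,H]$, dyadically decompose both $q \le Q$ and the divisor $(h,q)$ (to handle the $(h,q)$-degradation of $F_2$), specialize $N=Q^3$, and optimize in $H$. This should produce a general ``master'' estimate of the form $\bigl(N^2/H + G(H,Q)\bigr) Z$, which is essentially the Proposition \ref{Propmain} referenced at the end of Section 2; substituting $N=Q^3$ and balancing yields the claimed bound $Q^{7/2-1/135}Z$.

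The main obstacle is Step 3. Each of Hypotheses \ref{H2} and \ref{H3} contributes two competing terms (a ``random'' term in $R^8/q$ or $(h,q)R^4/q$, and a ``diagonal'' term of size $R^4$ or $R^2$), and the exponent $1/135$ emerges only from the solution of a small linear program in $\log H$, $\log q$, $\log R$ and $\log(h,q)$ that simultaneously beats the barrier $Q^{7/2}$ in every range. Secondary difficulties include controlling the leftover byproducts of Step 1, namely the Jacobi symbol, the $\epsilon_q$-phase, and the tails of $\hat\Phi$, for which the double large sieve (Proposition \ref{doublelargesieve}) and the complete-sum bounds of Propositions \ref{primemoduli}--\ref{primepowermoduli} are the natural tools; these bounds must be absorbed into the $Q^\varepsilon$ factor without eating into the saving.
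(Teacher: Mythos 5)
Your proposal does not address the statement at all. The statement to be proved is Proposition \ref{Gausssums}, the classical evaluation of the quadratic Gauss sum $G(q;a,b)=\sum_{n=1}^q e_q(an^2+bn)$ for odd $q$. What you have written instead is an outline of the proof of the paper's main result, Theorem \ref{mainresult} (Poisson summation, Weyl differencing, Hypotheses \ref{H2} and \ref{H3}, the exponent $1/135$, etc.). None of that machinery is relevant here; indeed Proposition \ref{Gausssums} is one of the classical \emph{inputs} to that argument, so invoking the large-sieve apparatus to prove it would be circular in spirit and is in any case a non sequitur.

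The actual content required is elementary and standard. For the vanishing case, set $d=(a,q)$ and observe that replacing $n$ by $n+q/d$ leaves $e_q(an^2)$ unchanged (using that $q$ is odd so the cross term $2an(q/d)$ is divisible by $q$ together with $a(q/d)^2\equiv 0\bmod q$) but multiplies the summand by $e_d(b)$; hence $G(q;a,b)=e_d(b)\,G(q;a,b)$, which forces $G(q;a,b)=0$ unless $d\mid b$. For $(a,q)=1$, complete the square: since $q$ is odd, $2$ and $4a$ are invertible modulo $q$, and
$$
an^2+bn\equiv a\left(n+\overline{2a}\,b\right)^2-\overline{4a}\,b^2 \bmod q,
$$
so $G(q;a,b)=e_q\left(-\overline{4a}\,b^2\right)G(q;a,0)$, and the evaluation $G(q;a,0)=\left(\frac{a}{q}\right)\epsilon_q\sqrt{q}$ is the classical Gauss--Schaar formula. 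The paper itself simply cites Graham--Kolesnik and Berndt--Evans--Williams for exactly these facts. You should either reproduce this short completing-the-square argument or cite the standard references; the large-sieve outline you have given proves nothing about $G(q;a,b)$.
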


\begin{proof} This follows from the results in \cite[subsection 7.4]{GrKo} and \cite[Theorem 1.1.5 and Lemma 1.2.1]{BEW}.
\end{proof}

Finally, we will need the following standard estimate for sums of greatest common divisors.

\begin{Lemma} \label{gcdsums}
Let $0<\sigma\le 1$, $H\ge 1$ and $r\in \mathbb{N}$. Then
$$
\sum\limits_{1\le h\le H} (h,r)^{\sigma}\ll_{\varepsilon} Hr^{\varepsilon}.
$$
\end{Lemma}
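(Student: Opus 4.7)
The plan is to split the sum according to the value of $d=(h,r)$, which must be a divisor of $r$. Writing
$$
\sum_{1\le h\le H}(h,r)^{\sigma}=\sum_{d\mid r} d^{\sigma}\cdot \#\{h:1\le h\le H,\ (h,r)=d\},
$$
I would first bound the counting function: substituting $h=dk$, the constraint $(h,r)=d$ forces $(k,r/d)=1$ and $1\le k\le H/d$, so in particular the count is at most $H/d$. Hence
$$
\sum_{1\le h\le H}(h,r)^{\sigma}\le H\sum_{d\mid r}d^{\sigma-1}.
$$

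Next I would exploit the hypothesis $0<\sigma\le 1$: since every divisor $d$ of $r$ satisfies $d\ge 1$ and the exponent $\sigma-1$ is non-positive, we have $d^{\sigma-1}\le 1$. Consequently
$$
\sum_{d\mid r} d^{\sigma-1}\le \tau(r),
$$
where $\tau$ is the usual divisor function. The final step is the classical divisor bound $\tau(r)\ll_{\varepsilon}r^{\varepsilon}$, which is proved by writing $\tau$ as a multiplicative function and checking that $\tau(p^a)/p^{a\varepsilon}$ is bounded uniformly in primes $p$ and exponents $a$ once $p^{\varepsilon}\ge 2$, so that only finitely many small primes contribute. Combining everything gives the claimed estimate $\sum_{1\le h\le H}(h,r)^{\sigma}\ll_{\varepsilon} Hr^{\varepsilon}$.

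There is no serious obstacle here: the lemma is a standard packaging of the divisor bound together with the trivial fact that $(h,r)^{\sigma}$ is no larger than $(h,r)$ when $\sigma\le 1$. The only genuine input is the divisor bound, and the only thing to verify carefully is that bounding $\#\{h\le H:(h,r)=d\}$ by $H/d$ (rather than requiring the sharper condition $(k,r/d)=1$) is sufficient, which it is because we are after an upper bound of the stated shape.
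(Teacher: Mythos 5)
Your proof is correct and follows essentially the same route as the paper: decompose by $d=(h,r)$, bound the count of such $h$ by $H/d$, use $d^{\sigma-1}\le 1$ for $\sigma\le 1$, and finish with the divisor bound $\tau(r)\ll_{\varepsilon}r^{\varepsilon}$.
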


\begin{proof} We divide the sum over $h$ according to the value of $(r,h)$, getting
$$
\sum\limits_{1\le h\le H} (h,r)^{\sigma}=\sum\limits_{d|r} d^{\sigma}\sum\limits_{\substack{1\le h\le H\\ (h,r)=d}} 1\le \sum\limits_{d|r} d^{\sigma}\sum\limits_{\substack{1\le h\le H\\ d|h}} 1\le H\sum\limits_{d|r} d^{\sigma-1}\le H\tau(r)\ll_{\varepsilon} Hr^{\varepsilon}.
$$
\end{proof}

\section{Wolke's approach} \label{WolApp}
In this section, we describe Wolke's approach to the case of prime moduli and its extension to square moduli. 

\subsection{General approach}
It is easy to prove (see \cite[Lemma 1]{BaiZhao1}) that for any finite sequence $\alpha_1,...,\alpha_K$ of real numbers, one has a bound of the form
\begin{equation} \label{lsreduce}
\sum\limits_{k=1}^K \left|\sum\limits_{M<n\le M+N} a_ne(n\alpha_k)\right|^2 \ll 
\max\limits_{x\in \mathbb{R}} \sharp\left\{k\in \{1,...,K\} : ||\alpha_k-x||\le \frac{1}{N}\right\}\cdot NZ,
\end{equation}
where for any real number $z$, $||z||$ denotes the distance of $z$ to the nearest integer. Throughout the sequel,
we will write
\begin{equation} \label{Deltadef}
\Delta:=\frac{1}{N}.
\end{equation}
As a consequence, for any subset $\mathcal{S}$ of the positive integers,  \eqref{lssparse} holds for
$$
\Delta_{\mathcal{S}}(Q,N)=N\cdot \max\limits_{x\in [0,1]} \sharp \left\{\frac{a}{q} : q\in \mathcal{S},\ q\le Q,\ 1\le a\le q,\ (a,q)=1,\ \left|\left|\frac{a}{q}-x\right|\right|\le \Delta\right\}.
$$ 
This links large sieve bounds with bounds for the number of Farey fractions in short intervals. 

\subsection{Wolke's treatment of prime moduli} 
For the case when $\mathcal{S}=\mathbb{P}$ is the set of primes, one therefore needs to estimate the quantity
$$
M:= \max\limits_{x\in [0,1]} \sharp \left\{\frac{a}{p} : p\in \mathbb{P},\ p\le Q,\ 1\le a\le p-1,\ \left|\left|\frac{a}{p}-x\right|\right|\le \Delta\right\}.
$$ 
In his treatment of prime moduli, Wolke \cite{Wol} used the Dirichlet approximation theorem to approximate $x\in [0,1]$ above by a fraction $b/r$ so that 
\begin{equation} \label{appro}
1\le r\le \tau, \quad (b,r)=1, \quad x=\frac{b}{r}+z \mbox{ with } |z|\le \frac{1}{r\tau}.
\end{equation}
Here and in the sequel, we set
$$
\tau=\left[\sqrt{N}\right]=\left[\frac{1}{\sqrt{\Delta}}\right].
$$
(With this choice of $\tau$, we have $1/(r\tau)\ge \Delta$ if $1\le r\le \tau$, which is essential in Wolke's method.) 
Now it turns out that counting Farey fractions $a/p$ in small intervals of the form 
$$
[x-\Delta,x+\Delta]=\left[\frac{b}{r}+z-\Delta,\frac{b}{r}+z+\Delta\right]
$$ 
relates to counting primes $p$ in small segments of residue classes modulo $r$. This can be done using the powerful Brun-Titchmarsh theorem. Along these lines, Wolke obtained a bound of the form
$$
\sum\limits_{\substack{p\in \mathbb{P}\\ p\le Q}} \sum\limits_{a=1}^{p-1} \left|\sum\limits_{M<n\le M+N} a_ne\left(\frac{na}{p}\right)\right|^2\ll \frac{Q^2\log\log Q}{\log(Q/\sqrt{N})}\cdot Z
$$ 
if $\sqrt{N}<Q\le N$, as mentioned in subsection \ref{knownresults}. 

\subsection{Application of Wolke's approach to square moduli} It turns out that Wolke's approach also gives fairly good large sieve bounds for the case of square moduli. In this case, one needs an upper bound for the quantity
$$
\tilde{M}:=\max\limits_{x\in [0,1]} P(x),
$$
where 
\begin{equation} \label{Pxdef}
P(x):= \sharp \left\{\frac{a}{q^2} : Q<q\le 2Q,\ 1\le a\le q,\ (a,q)=1,\ \left|\left|\frac{a}{q^2}-x\right|\right|\le \Delta\right\}.
\end{equation}
Note that for convenience, above we have restricted $q$ to a dyadic interval. 
Using \eqref{lsreduce} and a division of the $q$-range into such dyadic intervals, we have the following. If a bound of the form
$$
\tilde{M}\ll F(Q,N)
$$
with a function $F(Q,N)$ that is non-decreasing in $Q$ holds, then \eqref{Zhaobound} with 
\begin{equation} \label{tildeDeltaconcrete}
\tilde{\Delta}(Q,N)=NF(Q,N)\log 2Q
\end{equation}
follows. 
To bound $P(x)$, we again approximate $x$ in the form in \eqref{appro}. It is easy to see that one may assume $|z|\ge \Delta$ without loss of generality. Moreover, the case of negative $z$ works in a similar way as that of positive $z$. So we assume that 
\begin{equation} \label{zrange}
\Delta\le z\le \frac{1}{r\tau}=\frac{1}{r\left[\sqrt{N}\right]}=\frac{1}{r\left[1/\sqrt{\Delta}\right]}
\end{equation}
throughout the sequel. In \cite{Bai1} and \cite{BaiZhao1} we started by bounding $P(x)$ in a convenient way using smooth weight functions. This can be formulated in the following form (see \cite[section 5]{BaiZhao1}, in particular, with some modifications).

\begin{Proposition} \label{iniPx}
Assume that the condition \eqref{QNcond} is satisfied. 
Assume also that $x\in [0,1]$ satisfies an approximation of the form in \eqref{appro} and $z$ lies in the range in \eqref{zrange}. Then
\begin{equation} \label{Pxintbound}
P(x)\ll 1+\delta^{-1}\int\limits_{\mathbb{R}} \Psi\left(\frac{y}{Q^2}\right)\Omega(\delta,y){\rm d}y.
\end{equation}
Here $\delta$ is any parameter satisfying
\begin{equation} \label{deltarange}
\frac{Q^2\Delta}{z}\le \delta\le Q^2,
\end{equation}
$\Psi$ with $0\le \Psi\le 1$ is a Schwartz class function with compact support $\mathcal{C}$ in the range
$$
\left[\frac{3}{4},\frac{9}{4}\right]\subseteq \mathcal{C}\subseteq \left[\frac{1}{2},\frac{5}{2}\right],
$$
and
\begin{equation} \label{Omegadef}
\Omega(\delta,y):=\sum\limits_{q\in \mathbb{Z}} \Phi_1\left(\frac{q-\sqrt{y}}{\delta/Q}\right)\sum\limits_{\substack{m\in \mathbb{Z}\\ m\equiv -bq^2\bmod{r}}} \Phi_2\left(\frac{m-yrz}{\delta rz}\right),
\end{equation}
$\Phi_1$ and $\Phi_2$ with $0\le \Phi_1,\Phi_2\le 1$ being suitable Schwartz class functions with compact support in 
$\mathbb{R}_{>0}$.  
\end{Proposition}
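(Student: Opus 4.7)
My plan is to pass from the discrete count defining $P(x)$ to a count of integer pairs $(q,m)$ via the Diophantine approximation \eqref{appro}, and then majorize this count by the smoothed integral in \eqref{Pxintbound}.

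For the translation, writing $x = b/r + z$ and unpacking the distance $\|a/q^2 - x\| \le \Delta$, one finds that this inequality is equivalent to the existence of an integer $k$ with
$$|ar - bq^2 - krq^2 - zrq^2| \le rq^2\Delta.$$
Setting $m := ar - bq^2 - krq^2$, we obtain automatically $m \equiv -bq^2 \pmod r$ together with $|m - zrq^2| \le rq^2\Delta$. Conversely, a pair $(q,m)$ satisfying these two constraints uniquely recovers the residue $a \bmod q^2$ through $m + bq^2 = r(a - kq^2)$. Because $2rq^2\Delta = 2rq^2/N \ll rq^2$, each $a$ produces at most one valid $m$, and hence
$$P(x) \le \sum_{Q < q \le 2Q} \sharp\{m \in \mathbb{Z} : m \equiv -bq^2 \bmod r,\ |m - zrq^2| \le rq^2\Delta\}.$$

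Next, I would insert the smooth weights. Choose $\Phi_1,\Phi_2,\Psi$ as in the statement, additionally requiring $\Phi_1,\Phi_2 \ge c > 0$ on sufficiently wide sub-intervals of their supports and $\Psi \ge c' > 0$ on $[3/4,9/4]$. The change of variables $u = (q-\sqrt y)/(\delta/Q)$ gives $|dy/du| = 2\sqrt y\, \delta/Q \asymp \delta$ on the support of the integrand, whence the crucial lower bound
$$\int_{\mathbb{R}} \Psi(y/Q^2)\,\Phi_1\!\bigl((q - \sqrt y)/(\delta/Q)\bigr)\, dy \gg \delta$$
holds uniformly for $q$ in an inner dyadic subinterval of $(Q,2Q]$. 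The at most $O(1)$ boundary fractions $a/q^2$ with $q$ in a narrow strip near $Q$ or $2Q$ account for the ``$1$'' term in \eqref{Pxintbound}. For the inner range, this yields a majorization of $\mathbf{1}[Q < q \le 2Q]$ by $\delta^{-1}$ times the above integral. Separately, on the support of $\Phi_1((q-\sqrt y)/(\delta/Q))\Psi(y/Q^2)$ one has $|y - q^2| \ll \delta$ and hence $|yrz - zrq^2| \ll \delta rz$; combined with the hypothesis $\delta \ge Q^2\Delta/z$, which guarantees $\delta rz \gg rq^2\Delta$, this allows the indicator $\mathbf{1}[|m - zrq^2| \le rq^2\Delta]$ to be dominated (on the support of the integrand) by a constant multiple of $\Phi_2((m - yrz)/(\delta rz))$. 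Inserting both majorizations, swapping the discrete sums with the integral, and enlarging the $q$-range trivially from $(Q,2Q]$ to $\mathbb{Z}$ delivers the bound \eqref{Pxintbound}.

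The main technical obstacle is to arrange the one-sided supports of $\Phi_1,\Phi_2$ in $\mathbb{R}_{>0}$ together with their positivity ranges so that the shift between the ``natural'' centre $zrq^2$ and the ``integrated'' centre $yrz$ is absorbed within the lower-bound region of $\Phi_2$. This requires choosing the support of $\Phi_2$ sufficiently far from the origin and sufficiently wide, or alternatively splitting the symmetric indicator into positive and negative halves with separate majorants. The lower bound $\delta \ge Q^2\Delta/z$ in \eqref{deltarange} is precisely what is needed so that $\Phi_2$ at scale $\delta rz$ dominates the original window of size $rq^2\Delta$, while the upper bound $\delta \le Q^2$ ensures that the $q$-smoothing window $\delta/Q$ does not swamp the dyadic range, so that the integral representation remains meaningful.
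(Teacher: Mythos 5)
Your proposal is essentially the correct derivation and matches the approach the paper imports from \cite{BaiZhao1}: unfold $\|a/q^2 - x\| \le \Delta$ via the approximation $x = b/r + z$ into the pair-counting condition $m \equiv -bq^2 \bmod r$, $|m - zrq^2| \le rq^2\Delta$ with $m := ar - bq^2 - krq^2$, then majorize the sharp indicator by an averaged smooth weight. The two delicate points you flag — (i) the lower bound $\int \Psi(y/Q^2)\Phi_1\bigl((q-\sqrt y)/(\delta/Q)\bigr)\,dy \gg \delta$ on an inner subrange of $q$, via the change of variables $u = (q-\sqrt y)Q/\delta$, and (ii) the alignment of the centres $zrq^2$ and $yrz$, using $\delta \ge Q^2\Delta/z$ to keep the offset $O(\delta)$ and choosing the supports of $\Phi_1,\Phi_2$ wide enough within $\mathbb{R}_{>0}$ — are exactly where the ``suitable Schwartz functions'' clause of the statement is invoked, and your sketch of how to resolve them is sound. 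One small imprecision: the relevant injectivity for $P(x)\le \sum_{q}\sharp\{m\}$ is that each admissible $(q,m)$ determines at most one $a\bmod q^2$ (via $(m+bq^2)/r = a - kq^2$), not that each $a$ yields at most one $m$; both are true and for the same reason ($2rq^2\Delta < rq^2$), but the map whose injectivity is needed goes in the direction $a/q^2 \mapsto (q,m)$.
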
  

Essentially, this means is that $q$ runs over a short interval of length $\delta/Q$ around $\sqrt{y}$, $m$ runs over a short interval of length $\delta rz$ around $yrz$, these two variables are linked by the congruence $m\equiv -bq^2\bmod{r}$, and the count of pairs $(m,q)$ in the resulting small box is averaged over the variable $y$ of size $Q^2$. This average is scaled by a factor of $\delta^{-1}$. Note that with increasing $\delta$, the area of the box increases by a factor of $\delta^2$. So it appears that taking $\delta$ as small as possible in \eqref{deltarange} is optimal. However, after Fourier analytic transformations, it may be better to choose $\delta$ differently. So it helps to have flexibility in the choice of $\delta$. Writing the congruence in the form $q^2\equiv -\overline{b}m \bmod{r}$, the task becomes to count squares in small segments of residue classes modulo $r$, which is a situation similar to that considered by Wolke for primes.     

\section{Previous ways to estimate $P(x)$}
In this section, we review how the quantity $P(x)$ was estimated in \cite{Bai1} and \cite{BaiZhao1}, indicate advantages and disadvantages of these methods and describe how we plan to overcome the disadvantages in this article. 
\subsection{Direct count} A simple way to estimate $P(x)$ is to directly count pairs $(m,q)$ satisfying the congruence
$$
q^2\equiv -\overline{b}m\bmod r
$$
in the boxes described in the previous subsection, followed by an averaging over $y$ in a range of size $\asymp Q^2$. It is easy to work out that for a given $m$, we get
$$
O\left(\left(1+\frac{\delta/Q}{r}\right)r^{\varepsilon}\right)
$$
possible $q$'s in the said box. The number of possible $m$'s is clearly bounded by 
$$
O\left(1+\delta rz\right). 
$$
This gives 
\begin{equation} \label{firstbound}
P(x)\ll 1+\delta^{-1}Q^2\left(1+\delta rz\right)\left(1+\frac{\delta/Q}{r}\right)r^{\varepsilon}.
\end{equation}
Following the method in \cite{Bai1}, it turns out that the averaging over $y$ allows one to remove the term "$1+$" in "$1+\delta rz$" above. Consequently, we  get
$$
P(x)\ll 1+\left(Q^2rz+\delta Qz\right)r^{\varepsilon}.
$$
Choosing $\delta$ as small as possible in \eqref{deltarange}, i.e. $\delta=Q^2\Delta/z$, it follows that
\begin{equation} \label{fromtrivial0}
P(x)\ll 1+\left(Q^2rz+Q^3\Delta\right)r^{\varepsilon}
\end{equation}
and hence 
\begin{equation} \label{fromtrivial}
P(x)\ll \left(1+Q^2\sqrt{\Delta}+Q^3\Delta\right)N^{\varepsilon},
\end{equation}
which yields the large sieve bound \eqref{Zhaobound} with 
\begin{equation} \label{lstrivial}
\tilde{\Delta}(Q,N)=\left(N+Q^2\sqrt{N}+Q^3\right)N^{\varepsilon}
\end{equation}
achieved in \cite{Bai1}. We note that using $\Delta=1/N$ and our condition \eqref{QNcond}, the bound \eqref{fromtrivial} simplifies into
\begin{equation} \label{Pxsimpler}
P(x)\ll Q^2\Delta^{1/2}N^{\varepsilon}.
\end{equation}
This is the estimate which we seek to beat in the sequel. In what follows, we will assume that 
\begin{equation} \label{fundcond}
Q^2rz\ge N^{\varepsilon}.
\end{equation}
If this is not satisfied, then \eqref{fromtrivial0} yields the estimate
\begin{equation} \label{bestposs}
P(x)\ll \left(1+Q^3\Delta\right)N^{2\varepsilon},
\end{equation}
which is the best possible we can hope for and consistent with Zhao's conjecture \eqref{conj}. Let us record the established bound \eqref{fromtrivial0} in a slightly modified form as an extra lemma below.

\begin{Lemma} \label{previousPxbound}
Under the condition \eqref{QNcond}, we have
$$
P(x)\ll (1+Q^2rz+Q^3\Delta)N^{\varepsilon}.
$$
\end{Lemma}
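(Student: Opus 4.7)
The statement is essentially a reformulation of the estimate \eqref{fromtrivial0} derived in the preceding subsection, with the $r^{\varepsilon}$ there replaced by $N^{\varepsilon}$; this replacement is harmless since $r\le \tau\le \sqrt{N}$. My plan is therefore to follow the direct-counting approach already sketched, making two steps precise.

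Starting from Proposition \ref{iniPx}, I would first estimate $\Omega(\delta,y)$ pointwise in $y$. For each integer $m$ in the window of length $O(\delta rz)$ around $yrz$, the congruence $q^2 \equiv -\overline{b}m \pmod{r}$ has at most $\ll r^{\varepsilon}(m,r)^{1/2}$ solutions modulo $r$, so the number of admissible $q$'s in the interval of length $O(\delta/Q)$ around $\sqrt{y}$ is
\[
\ll \left(1+\frac{\delta}{Qr}\right) r^{\varepsilon}(m,r)^{1/2},
\]
with Lemma \ref{gcdsums} absorbing the $\gcd$-factor once we sum over $m$. Bounding the number of $m$'s trivially by $O(1+\delta rz)$ and taking the product produces
\[
\Omega(\delta,y) \ll (1+\delta rz)\left(1+\frac{\delta}{Qr}\right) r^{\varepsilon}.
\]

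The second, more delicate step is to show that the $y$-averaging replaces the factor $1+\delta rz$ by $\delta rz$. For each fixed $q$, the residue class $-\overline{b}q^2 \bmod r$ meets the window of length $\delta rz$ centred at $yrz$ for only a proportion $\asymp\delta z$ of the $y\asymp Q^2$: if $\delta rz \ge r$ the window genuinely contains $\asymp \delta z$ admissible $m$'s, while if $\delta rz < r$ it contains at most one, with density $\asymp \delta z$ as $y$ varies. I would make this rigorous by applying Poisson summation (Proposition \ref{Poisum}) to the inner $m$-sum in \eqref{Omegadef}: the zero frequency contributes exactly $\delta z\,\hat{\Phi}_2(0)$ per $q$, while the non-zero frequencies introduce oscillating factors $e(-nyz)$ which, combined with the Schwartz decay of $\hat{\Phi}_2$, render the resulting $y$-integral negligible after repeated integration by parts.

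Inserting the improved pointwise estimate into Proposition \ref{iniPx} and integrating $y$ over a range of size $Q^2$ yields
\[
P(x) \ll 1 + \delta^{-1} Q^2 \cdot \delta rz \cdot \left(1+\frac{\delta}{Qr}\right) r^{\varepsilon} = 1 + \bigl(Q^2 r z + \delta z Q\bigr) r^{\varepsilon}.
\]
Choosing $\delta = Q^2\Delta/z$, the smallest value permitted by \eqref{deltarange} (which is admissible because $z\ge\Delta$ forces $Q^2\Delta/z\le Q^2$), converts $\delta z Q$ into $Q^3\Delta$ and gives \eqref{fromtrivial0}. Replacing $r^{\varepsilon}$ by $N^{\varepsilon}$ then yields the lemma. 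I expect the averaging step to be the main obstacle: it is the only non-trivial use of the Schwartz structure of $\Phi_2$, and one must verify that the non-zero Poisson frequencies are genuinely negligible across the $y$-interval of length $\asymp Q^2$, which requires a careful accounting of the oscillation rate $|n|z$ against $Q^2$.
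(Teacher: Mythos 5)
The paper proves this lemma simply by citing \cite[Lemma 6]{Bai1}; your proposal instead tries to reconstruct the argument from the ``direct count'' sketch in the preceding subsection. That is the right framework, but the reconstruction as written has two genuine gaps that you would need to close.

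The first is the gcd bookkeeping. You bound the number of admissible $q$'s per $m$ by $(1+\delta/(Qr))\,r^{\varepsilon}(m,r)^{1/2}$ and then cite Lemma \ref{gcdsums} to absorb $\sum_m(m,r)^{1/2}$. But Lemma \ref{gcdsums} treats sums over $1\le h\le H$, whereas here the sum runs over a short \emph{shifted} window of length $\asymp\delta rz$ around $yrz$. The same divisor argument applied to a window $(M,M+H]$ gives $\sum_m(m,r)^{1/2}\ll(H+r^{1/2})r^{\varepsilon}$, and the extra $r^{1/2}$ — coming from divisors $d>H$ of $r$, each of which may hit the window in a single $m$ — is not negligible when $\delta rz\ll r^{1/2}$, a regime the lemma must cover since $\delta rz$ can be as small as $Q^2r\Delta$. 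One needs an additional argument, for instance exploiting $(m,r)=(q^2,r)$ together with the constraint $q\asymp Q$, to show the large-gcd contributions are acceptable. The second gap is that your Steps 1 and 2 decompose the double sum in incompatible directions: Step 1 fixes $m$ and counts $q$'s (carrying the per-$m$ weight $(m,r)^{1/2}$), while Step 2 fixes $q$, applies Poisson to the $m$-sum and averages in $y$. Your final display then multiplies the Step-2 output $\delta rz$ with the Step-1 factor $(1+\delta/(Qr))r^{\varepsilon}$ as though they belonged to the same decomposition, but the Poisson computation as you set it up controls only the inner $m$-sum for one $q$ at a time and says nothing about the $q$-count per $m$; once you try to reattach that count the gcd issue from the first gap reappears. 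Closing both gaps essentially amounts to reproducing the careful analysis of \cite[Lemma 6]{Bai1}, which is exactly what the paper defers to rather than re-derives.
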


\begin{proof}
This can be found in \cite[Lemma 6]{Bai1}. 
\end{proof}

\subsection{Poisson summation in $m$ and $q$} One may make progress on the problem of bounding $P(x)$ by using Fourier analytic tools. In \cite{Bai1}, the double sum over $q$ and $m$ in \eqref{Omegadef} was transformed by applying the Poisson summation formula in both variables. This led to an expression involving quadratic Gauss sums and exponential integrals. Bounding them from above gave the estimate (see \cite[inequality (49)]{Bai1})
$$
P(x)\ll \left(1+\delta Qz+\frac{\delta}{Qr^{1/2}}+\frac{Qr^{1/2}}{\delta^{1/2}}\right)N^{\varepsilon},
$$
which yielded an improvement of the large sieve bound in \eqref{lstrivial} if $Q\ge N^{5/14+\varepsilon}$ and established Zhao's conjecture \eqref{conj} if $Q\ge N^{5/12+\varepsilon}$. In \cite{BaiZhao1}, this approach was developed further. The authors evaluated the said quadratic Gauss sums and exponential integrals explicitly, which resulted in a dual expression. Next, they applied Weyl differencing, which led to a new counting problem. This gave an improvement if $r\le Q$. For $r>Q$, they applied a more direct Fourier analytic approach following \cite{Zhao1}, which also involved Poisson summation and Weyl differencing. Altogether, they obtained a bound of 
$$
P(x)\ll \left(Q^3\Delta+Q^{1/2}\right)(QN)^{\varepsilon}
$$  
if $Q\ge N^{1/3}$, giving \eqref{Zhaobound} with 
\begin{equation} \label{lsimp}
\tilde{\Delta}(Q,N)=\left(Q^3+Q^{1/2}N\right)(QN)^{\varepsilon}.
\end{equation}
Combining this with \eqref{lstrivial}, which is stronger if $Q\le N^{1/3}$,  implies \eqref{Zhaobound} with $\tilde{\Delta}(Q,N)$ as in \eqref{tildeDelta'}.

\subsection{Advantages and disadvantages} A direct count is simple but unable to avoid the contribution "$1+$" in the term 
$$
1+\frac{\delta/Q}{r}
$$
appearing in  \eqref{firstbound}, which leads to the extra term $Q^2N^{1/2}$ on the right-hand side of \eqref{lstrivial}, as compared to Zhao's conjecture \eqref{conj}. The situation may improve if we apply Fourier analytic tools. However, a double application of Poisson summation in both variables $q$ and $m$ results in complete exponential sums modulo $r$ (in this case, quadratic Gauss sums). A bound of $\ll r^{1/2}$ is the best we can hope for when estimating a complete exponential sum modulo $r$ (except this exponential sum vanishes). This creates a barrier which can be suboptimal if at least one of the initial sums over $q$ and $m$ is short, as compared to the modulus $r$. In particular, in the case when $N=Q^3$ and $r$ and $z$ are as large as possible (i.e., $r=Q^{3/2}$ and $z=1/Q^{3}$), it turns out that $m\asymp r^{1/3}$ and $q\asymp r^{2/3}$. Since $r^{1/3}$ is much smaller and $r^{2/3}$ is much larger than the square root of the modulus $r$, Poisson summation will be effective in the variable $q$ but wasteful in the variable $m$. Consequently, in this article, we overcome both of the issues raised above by only applying Poisson summation in $q$ and then arguing differently than in \cite{BaiZhao1}. Here, the distribution of square roots modulo $r$ will come into the picture.      

\section{Poisson summation only in $q$}
\subsection{Modification of the basic estimate}
In the following, we modify our bound for $P(x)$ in Proposition \ref{iniPx} in such a way that we can conveniently apply Poisson summation in the variable $q$ and calculate Fourier integrals arising from the integration in \ref{iniPx}. First, we replace the integration variable $y$ on the right-hand side of \eqref{Pxintbound} by $w^2$, thus obtaining 
\begin{equation} \label{mo}
P(x)\ll 1+\delta^{-1}\int\limits_{\mathbb{R}} 2w\Psi\left(\left(\frac{w}{Q}\right)^2\right)\Omega(\delta,w^2){\rm d}w\ll  1+\delta^{-1}Q\int\limits_{\mathbb{R}} \tilde{\Psi}\left(\frac{w}{Q}\right)\Omega(\delta,w^2){\rm d}w,
\end{equation}
where  $\tilde{\Psi}(u):=\Psi(u^2)$ for all $u\in \mathbb{R}$. So $w\asymp Q$ above. By the definition of $\Omega(\delta,y)$ in \eqref{Omegadef}, we have  
\begin{equation*}
\Omega(\delta,w^2):=\sum\limits_{q\in \mathbb{Z}} \Phi_1\left(\frac{q-w}{\delta/Q}\right)\sum\limits_{\substack{m\in \mathbb{Z}\\ m\equiv -bq^2\bmod{r}}} \Phi_2\left(\frac{m-w^2rz}{\delta rz}\right).
\end{equation*}
Recalling \eqref{deltarange}, we also see that $m\asymp w^2rz \asymp Q^2rz$ above.  
Moreover, we observe that  
$$
\frac{m-w^2rz}{\delta rz}=\frac{\sqrt{m}-w\sqrt{rz}}{(\delta rz)/(\sqrt{m}+w\sqrt{rz})} \asymp  \frac{\sqrt{m}-w\sqrt{rz}}{(\delta rz)/(w\sqrt{rz})} \asymp  \frac{\sqrt{m}-w\sqrt{rz}}{\delta\sqrt{rz}/Q}.
$$
Considering this and dropping the term $ \tilde{\Psi}\left(w/Q\right)$ in \eqref{mo}, we may replace the bound \eqref{Pxintbound} by 
\begin{equation} \label{Pxmodified}
P(x)\ll 1+\delta^{-1}Q\int\limits_{\mathbb{R}} \tilde{\Omega}(\delta,w){\rm d}w, 
\end{equation}
where
\begin{equation} \label{Omegamodified}
\tilde{\Omega}(\delta,w):=\sum\limits_{q\in \mathbb{Z}} \Phi_3\left(\frac{q-w}{\delta/Q}\right)\sum\limits_{\substack{m\in \mathbb{Z}\\ m\equiv -bq^2\bmod{r}}} \Phi_4\left(\frac{\sqrt{m}-w\sqrt{rz}}{\delta \sqrt{rz}/Q}\right) \Phi_5\left(\frac{m}{Q^2rz}\right)
\end{equation}
for suitable Schwartz class functions $\Phi_3$, $\Phi_4$ and $\Phi_5$ satisfying $0\le \Phi_i\le 1$ and having compact support in $\mathbb{R}_{>0}$. 
Furthermore, to apply Poisson summation in the variable $q$, we rewrite the congruence $m\equiv -bq^2\bmod{r}$ above  in the form
$$
q\equiv \sqrt{jm}\bmod{r} \quad \mbox{with } j:=-\overline{b},
$$
provided a square root $\sqrt{jm}$ of $jm$ modulo $r$ exists. (We recall that we really mean the collection of {\it all} modular square roots of $jm$ modulo $r$ rather than a single one.) Now re-arranging the summations on the right-hand side of \eqref{Omegamodified}, we get 
\begin{equation} \label{tildeOmega}
\tilde{\Omega}(\delta,w)=\sum\limits_{m\in \mathbb{Z}}\Phi_5\left(\frac{m}{Q^2rz}\right) \Phi_4\left(\frac{\sqrt{m}-w\sqrt{rz}}{\delta \sqrt{rz}/Q}\right) \sum\limits_{\substack{q\in \mathbb{Z}\\ q\equiv \sqrt{jm}\bmod{r}}}  \Phi_3\left(\frac{q-w}{\delta/Q}\right).
\end{equation}
We point out that the square root of $m$ appearing in the argument of $\Phi_4$ above is the usual analytic square root of $m$, whereas the square root of $jm$ appearing in the summation condition for the variable $q$ is a modular square root.  

\subsection{Poisson summation and evaluation of Fourier integrals}
Applying Proposition \ref{Poisum} to the inner-most sum over $q$ in \eqref{tildeOmega} gives
$$
 \sum\limits_{\substack{q\in \mathbb{Z}\\ q\equiv \sqrt{jm}\bmod{r}}}  \Phi_3\left(\frac{q-w}{\delta/Q}\right)=\frac{\delta}{Qr}\cdot \sum\limits_{l\in \mathbb{Z}} \hat{\Phi}_3\left(\frac{l\delta}{Qr}\right)e_r\left(l(\sqrt{jm}-w)\right),
$$
where again the square root in the exponential denotes a modular square root modulo $r$. 
Plugging this into \eqref{tildeOmega}, re-arranging the $m$- and $l$-summations, using \eqref{Pxmodified} and pulling in the integral, we have
$$
P(x)\ll 1+\frac{1}{r} \sum\limits_{l\in \mathbb{Z}} \hat{\Phi}_3\left(\frac{l\delta}{Qr}\right)\sum\limits_{m\in \mathbb{Z}}\Phi_5\left(\frac{m}{Q^2rz}\right)  e_r\left(l\sqrt{jm}\right) \int\limits_{\mathbb{R}}\Phi_4\left(\frac{\sqrt{m}-w\sqrt{rz}}{\delta \sqrt{rz}/Q}\right) e\left(-\frac{l}{r}\cdot w\right){\rm d}w. 
$$ 
The Fourier integral above equals
$$
\int\limits_{\mathbb{R}}\Phi_4\left(\frac{\sqrt{m}-w\sqrt{rz}}{\delta \sqrt{rz}/Q}\right) e\left(-\frac{l}{r}\cdot w\right){\rm d}w=-\frac{\delta}{Q} \cdot \hat{\Phi}_4\left(-\frac{l\delta}{Qr}\right)e\left(-\frac{l\sqrt{m}}{r^{3/2}z^{1/2}}\right). 
$$
We conclude that
\begin{equation} \label{afterPoisson}
P(x)\ll 1+\frac{\delta}{Qr} \left|\sum\limits_{l\in \mathbb{Z}} W\left(\frac{l\delta}{Qr}\right)\sum\limits_{m\in \mathbb{Z}}\Phi_5\left(\frac{m}{Q^2rz}\right) e\left(-\frac{l\sqrt{m}}{r^{3/2}z^{1/2}}\right) e_r\left(l\sqrt{jm}\right)\right|, 
\end{equation}
where $W(u):=\hat{\Phi}_3(u)\hat{\Phi}_4(-u)$ for all $u\in \mathbb{R}$.

\subsection{Discussion of the new bound for $P(x)$}
Bounding the sum on the right-hand side of \eqref{afterPoisson} trivially gives
$$
P(x)\ll 1+\frac{\delta}{Qr}\cdot \left(1+\frac{Qr}{\delta}\right)\left(1+Q^2rz\right)=1+\frac{\delta}{Qr}+Q\delta z+Q^2rz.
$$
Taking $\delta$ as small as possible, i.e. $\delta:=Q^2\Delta/z$, it follows that
$$
P(x)\ll 1+\frac{Q\Delta}{rz}+Q^3\Delta+Q^2rz\ll 1+\frac{Q\Delta}{rz}+Q^3\Delta+Q^2\sqrt{\Delta}.
$$ 
If $\sqrt{\Delta}/\sqrt{Q}\le rz\le \sqrt{\Delta}$, then the above bound for $P(x)$ is by a factor of $N^{\varepsilon}$ stronger than \eqref{fromtrivial}. If $rz<\sqrt{\Delta}/\sqrt{Q}$, then Lemma \ref{previousPxbound} implies 
$$
P(x)\ll \left(1+Q^{3/2}\Delta^{1/2}+Q^3\Delta\right)N^{\varepsilon}\ll  \left(1+Q^3\Delta\right)N^{\varepsilon},
$$
which is the expected bound for $P(x)$, consistent with Zhao's conjecture \eqref{conj}. We therefore aim to exploit cancellations in the sum on the right-hand side of \eqref{afterPoisson} to obtain a non-trivial bound for this sum. Since \eqref{fromtrivial} (respectively, \eqref{Pxsimpler}) is the best we have so far in the range $N^{1/4}\le Q\le N^{1/3}$, any improvement of this bound by a factor of $Q^{-\eta}$ for some $\eta>0$ would be a substantial progress on the problem.  

\subsection{Focus on a particular situation} As remarked in subsection \ref{known}, there has been no progress at the point $Q=N^{1/3}$ since Zhao's initial result in \cite{Zhao1} on the large sieve for square moduli. It is therefore of importance to pay special attention to this particular point. Moreover, the worst case in the estimate in Lemma \ref{previousPxbound} occurs when $rz$ is as large as possible, i.e. $rz=\Delta^{1/2}$. Note that if $Q=N^{1/3}$, then $\Delta^{1/2}=N^{-1/2}=Q^{-3/2}$. For this reason, we should keep track of what happens if 
\begin{equation} \label{specialsituation}
Q=N^{1/3} \quad \mbox{and} \quad rz=Q^{-3/2}.
\end{equation}
In this case, our bound for $P(x)$ in \eqref{afterPoisson} turns into
\begin{equation} \label{specialPx}
P(x)\ll 1+\frac{\delta}{Qr} \left|\sum\limits_{l\in \mathbb{Z}} W\left(\frac{l\delta}{Qr}\right)\sum\limits_{m\in \mathbb{Z}}\Phi_5\left(\frac{m}{Q^{1/2}}\right) e\left(-\frac{l\sqrt{m}}{r}\cdot Q^{3/4}\right) e_r\left(l\sqrt{jm}\right)\right|. 
\end{equation}
The following sections are organized as follows. We successively treat different ranges of $r$ by different means. For each range, we first investigate the special situation in which the equations in \eqref{specialsituation} hold and then handle the general case. Our investigation of the special situation will be sketchy, omitting some technical details, and the treatment of the general case will be precise. This procedure will help us to gain an intuition of what needs to be done before carrying out the calculations in full detail.

\section{Handling large $r$'s}
\subsection{Sketch for our particular situation} \label{sketchlarge} Let us look first at the situation when the equations in \eqref{specialsituation} are satisfied and $r$ is as large as possible, i.e. $r=N^{1/2}=Q^{3/2}$. In this case, taking \eqref{zrange} and \eqref{deltarange} into account, we necessarily have $z=\Delta$ and $\delta=Q^2$,
and \eqref{specialPx} turns into
\begin{equation} \label{specialPxlarger}
P(x)\ll 1+\frac{1}{Q^{1/2}} \left|\sum\limits_{l\in \mathbb{Z}} W\left(\frac{l}{Q^{1/2}}\right)\sum\limits_{m\in \mathbb{Z}}\Phi_5\left(\frac{m}{Q^{1/2}}\right) e\left(-\frac{l\sqrt{m}}{Q^{3/4}}\right) e_r\left(l\sqrt{jm}\right)\right|. 
\end{equation}
Bounding the right-hand side trivially gives
$$
P(x)\ll Q^{1/2},
$$
which is, up to a factor of $N^{\varepsilon}$, the same as \eqref{Pxsimpler}.
Our goal is to beat this bound, i.e. to show that $P(x)\ll Q^{1/2-\eta}$ for some $\eta>0$. Since $W$ has rapid decay and $\Phi_5$ has compact support in $\mathbb{R}_{>0}$, the right-hand side of \eqref{specialPxlarger} behaves essentially like 
\begin{equation} \label{essential1}
1+\frac{1}{Q^{1/2}} \left| \sum\limits_{|l|\ll Q^{1/2}}   \sum\limits_{m\asymp Q^{1/2}} e\left(-\frac{l\sqrt{m}}{Q^{3/4}}\right) e_r\left(l\sqrt{jm}\right)\right|.
\end{equation}
In the following, we call the exponential
\begin{equation} \label{analyticterm}
e\left(-\frac{l\sqrt{m}}{Q^{3/4}}\right)
\end{equation}
above the "analytic term" and the exponential 
\begin{equation} \label{arithmeticterm}
e_r\left(l\sqrt{jm}\right)
\end{equation}
the "arithmetic term". We observe that under the summation conditions on $l$ and $m$ in \eqref{essential1}, we have 
$$
\frac{l\sqrt{m}}{Q^{3/4}}\ll 1,
$$
and hence the analytic term does not oscillate. So we can remove it by partial summation without costs. It remains to bound the sum
$$
\Sigma=\sum\limits_{|l|\ll L}   \sum\limits_{m\asymp Q^{1/2}} e_r\left(l\sqrt{jm}\right)
$$ 
with $L\le Q^{1/2}$ nontrivially by $O(Q^{1-\eta})$ for some $\eta>0$. The above is an incomplete exponential sum in two variables modulo $r$. Completion techniques (such as Poissson summation) will not be efficient if the ranges of the variables are much shorter than $r^{1/2}$. In our case, the summation ranges are of size about $r^{1/3}$, which is very short. In this case, a reasonable idea is to create new points using the Cauchy-Schwarz inequality. This gives
\begin{equation} \label{postCS}
|\Sigma|^2\ll Q^{1/2} \sum\limits_{|l|\ll Q^{1/2}}  \left| \sum\limits_{m\asymp Q^{1/2}} e_r\left(l\sqrt{jm}\right)\right|^2\ll Q^{1/2}  \sum\limits_{l\in \mathbb{Z}}  V\left(\frac{l}{Q^{1/2}}\right) \left| \sum\limits_{m\asymp Q^{1/2}} e_r\left(l\sqrt{jm}\right)\right|^2,
\end{equation}
where $V$ is a suitable Schwartz class function satisfying $0\le V\le 1$. Expanding the modulus square and pulling in the sum over $l$, we deduce that
$$
|\Sigma|^2\ll Q^{1/2} \sum\limits_{m_1,m_2\asymp Q^{1/2}} \sum\limits_{l\in \mathbb{Z}} V\left(\frac{l}{Q^{1/2}}\right) e_r\left(l(\sqrt{jm_1}-\sqrt{jm_2})\right).
$$
Applying the Poisson summation formula, Proposition \ref{Poisum} with $L=r/Q^{1/2}=Q$ and $M=0$, it follows that
$$
|\Sigma|^2\ll Q\sum\limits_{m_1,m_2\asymp Q^{1/2}} \sum\limits_{\substack{n\in \mathbb{Z}\\ n\equiv\sqrt{jm_1}-\sqrt{jm_2} \bmod{r} }} \hat{V}\left(\frac{Q^{1/2}n}{r}\right).
$$
Using the rapid decay of $\hat{V}$, essentially only $n$'s with $n\ll r/Q^{1/2}$ contribute significantly, and hence we essentially get a bound of
\begin{equation} \label{Sigmabound}
|\Sigma|^2\ll Q \sum\limits_{\substack{m_1,m_2\asymp Q^{1/2}\\ \left|\left|(\sqrt{jm_1}-\sqrt{jm_2})/r\right|\right|\ll Q^{-1/2}}} 1.
\end{equation}
The same can be achieved via an application of the double large sieve, Proposition \ref{doublelargesieve}, with the convention that the modular square root $\sqrt{jm}$ lies in the range $0\le \sqrt{jm}<r$ and so $0\le \sqrt{jm}/r<1$. 
Discarding the summation condition $\left|\left|(\sqrt{jm_1}-\sqrt{jm_2})/r\right|\right|\ll Q^{-1/2}$, we get the trivial bound $|\Sigma|^2\ll Q^2$. Hence, we need to make use of this condition to beat the trivial bound. Heuristically, we expect that the "event" $\left|\left|(\sqrt{jm_1}-\sqrt{jm_2})/r\right|\right|\ll Q^{-1/2}$ occurs with a probability of $Q^{-1/2}$ when picking $m_1$ and $m_2$ randomly. Trivially, this event occurs when $m_1=m_2$. So it is reasonable to conjecture that 
$$
|\Sigma|^2\ll Q^{3/2}.
$$
However, it is difficult to detect the said event. A natural idea is to write the sum on the right-hand side of \eqref{Sigmabound} as 
$$
\Sigma'=\sum\limits_{\substack{m_1,m_2\asymp Q^{1/2}\\ \left|\left|(\sqrt{jm_1}-\sqrt{jm_2})/r\right|\right|\ll Q^{-1/2}}} 1
= \sum\limits_{\substack{\lambda\in \mathbb{Z}\\ |\lambda|\ll Q^{-1/2}r}} \sum\limits_{\substack{m_1,m_2\asymp Q^{1/2}\\ \sqrt{jm_1}-\sqrt{jm_2}\equiv \lambda \bmod{r}}} 1
$$
and apply Cauchy-Schwarz again to get
\begin{equation*}
\begin{split}
|\Sigma'|^2\ll & \frac{r}{Q^{1/2}}  \sum\limits_{\substack{\lambda\in \mathbb{Z}\\ |\lambda|\ll Q^{-1/2}r}} \bigg| \sum\limits_{\substack{m_1,m_2\asymp Q^{1/2}\\ \sqrt{jm_1}-\sqrt{jm_2}\equiv \lambda \bmod{r}}} 1\bigg|^2\\
\le & Q \sum\limits_{\lambda\bmod{r}} \bigg| \sum\limits_{\substack{m_1,m_2\asymp Q^{1/2}\\ \sqrt{jm_1}-\sqrt{jm_2}\equiv \lambda \bmod{r}}} 1\bigg|^2\\
= & Q  \sum\limits_{\substack{m_1,m_2,m_3,m_4\asymp Q^{1/2}\\ \sqrt{jm_1}-\sqrt{jm_2}\equiv \sqrt{jm_3}-\sqrt{jm_4} \bmod{r}}} 1\\
\ll & Q  E_2(cQ^{1/2};j,r),
\end{split}
\end{equation*}
and hence
$$
|\Sigma|^4\ll Q^3E_2(cQ^{1/2};j,r),
$$
where $c>0$ is a suitable constant and $E_2\left(cQ^{1/2};j,r\right)$ is the additive energy defined in \eqref{AE}. Of course, the above extension of the sum over $\lambda$ to all residue classes modulo $r$ comes with a cost: Now applying the conjectural bound in \eqref{2expect} only yields the trivial bound
$$
|\Sigma|^4\ll Q^{4+\varepsilon}. 
$$
So we need to work harder to obtain a saving. We may try to relate our original sum $\Sigma$ to a higher additive energy. This can be achieved by an initial application of  H\"older's inequality with the exponents $4/3$ and $4$ (or twice Cauchy-Schwarz), and then using Poisson summation and Cauchy-Schwarz again, as above. H\"older's inequality (or Cauchy-Schwarz applied to \eqref{postCS}) gives 
$$
|\Sigma|^4\ll Q^{3/2} \sum\limits_{|l|\ll Q^{1/2}}  \left| \sum\limits_{m\asymp Q^{1/2}} e_r\left(l\sqrt{jm}\right)\right|^4\ll Q^{3/2}  \sum\limits_{l\in \mathbb{Z}}  V\left(\frac{l}{Q^{1/2}}\right) \left| \sum\limits_{m\asymp Q^{1/2}} e_r\left(l\sqrt{jm}\right)\right|^4.
$$ 
Expanding the fourth power above and pulling in the sum over $l$, we deduce that
$$
|\Sigma|^4\ll Q^{3/2} \sum\limits_{m_1,m_2,m_3,m_4\asymp Q^{1/2}} \sum\limits_{l\in \mathbb{Z}} V\left(\frac{l}{Q^{1/2}}\right) e_r\left(l(\sqrt{jm_1}+\sqrt{jm_2}-\sqrt{jm_3}-\sqrt{jm_4})\right).
$$
Applying the Poisson summation formula, Proposition \ref{Poisum} with $L=r/Q^{1/2}=Q$ and $M=0$, as above, it follows that
$$
|\Sigma|^4\ll Q^2\sum\limits_{m_1,m_2,m_3,m_4\asymp Q^{1/2}} \sum\limits_{\substack{n\in \mathbb{Z}\\ n\equiv\sqrt{jm_1}+\sqrt{jm_2}-\sqrt{jm_3}-\sqrt{jm_4} \bmod{r} }} \hat{V}\left(\frac{Q^{1/2}n}{r}\right).
$$
Using the rapid decay of $\hat{V}$, we now get essentially  
\begin{equation*}
|\Sigma|^4\ll Q^2\sum\limits_{\substack{\lambda\in \mathbb{Z}\\ |\lambda|\ll Q^{-1/2}r}} \sum\limits_{\substack{m_1,m_2,m_3,m_4\asymp Q^{1/2}\\ \sqrt{jm_1}+\sqrt{jm_2}-\sqrt{jm_3}-\sqrt{jm_4}\equiv \lambda \bmod{r}}} 1.
\end{equation*}
Applying Cauchy-Schwarz again, in a similar fashion as above, we deduce that
\begin{equation*}
\begin{split}
|\Sigma|^8\ll & Q^5  \sum\limits_{\lambda\bmod{r}} \bigg| \sum\limits_{\substack{m_1,m_2,m_3,m_4\asymp Q^{1/2}\\ \sqrt{jm_1}+\sqrt{jm_2}-\sqrt{jm_3}-\sqrt{jm_4}\equiv \lambda \bmod{r}}} 1\bigg|^2\\
= & Q^5  \sum\limits_{\substack{m_1,m_2,m_3,m_4,m_5,m_6,m_7,m_8\asymp Q^{1/2}\\ \sqrt{jm_1}+\sqrt{jm_2}-\sqrt{jm_3}-\sqrt{jm_4}\equiv \sqrt{jm_5}+\sqrt{jm_6}-\sqrt{jm_7}-\sqrt{jm_8} \bmod{r}}} 1\\
\ll & Q^5  E_4(cQ^{1/2};j,r),
\end{split}
\end{equation*}
where $c>0$ is a suitable constant and $E_4\left(cQ^{1/2};j,r\right)$ is the higher additive energy defined in \eqref{AE4}. Now applying the conjectural bound in \eqref{4expect} yields 
$$
|\Sigma|^8\ll Q^{15/2+\varepsilon}
$$
upon recalling that $r=Q^{3/2}$. 
This is non-trivial! In fact, any improvement by a factor of $R^{-\eta}$ with $\eta>0$ of the established estimate \eqref{establishedbound} for $R=r^{1/3}$ would imply a non-trivial bound for $\Sigma$ in our special situation when \eqref{specialsituation} and $r=Q^{3/2}$ hold.  Theorem \ref{E4theo} gives such an improvement if $R$ is small compared to $r^{1/12}$. This range of improvement is not sufficient to obtain an {\it unconditional} non-trivial bound for $\Sigma$.  Thus, so far we get only a {\it conditional} improvement under the Hypothesis \ref{H2}. Combining everything above, the final bound is 
$$
P(x)\ll Q^{7/16+\varepsilon}
$$ 
in our situation. (Recall that the trivial bound is $P(x)\ll Q^{1/2}$.) If  the equations in \eqref{specialsituation} hold and $r$ is smaller than $Q^{3/2}$, then the analytic term \eqref{analyticterm} oscillates, so removing it by partial summation comes with a cost. Proceeding along the same lines as above, it turns out that these costs are moderate enough to still get a non-trivial bound under the Hypothesis \ref{H2} if $Q= N^{1/3}$ and $Q^{1+\eta}\le r\le Q^{3/2}$, where $\eta>0$ is arbitrarily small but fixed. In the next subsection, we will work out the details of these calculations for large $r$'s and general $Q$, $N$ and $z$. \\ \\
{\bf Remark.} If one avoids Wolke's approach using Diophantine approximation by fractions $b/r$, then one needs to bound the number of Farey fractions $a/q^2$ with $q\sim Q$ in intervals of the form $[a_0/q_0^2-\Delta,a_0/q_0^2+\Delta]$, where $q_0\sim Q$ is fixed. One may approach this problem in the same way as above via modular square roots modulo $q_0^2$. However, calculations show that in this case, our hypotheses on additive energies just lead us to a bound of $\ll Q^{1/2}$ for the said number of Farey fractions if $Q^3=N$, and therefore, no progress has been made along these lines. This indicates that an initial Diophantine approximation by Farey fractions $b/r$ is indeed essential. It allows us to replace the fractions $a_0/q_0^2$ by fractions $b/r$ with much smaller denominators. 

\subsection{Conditional treatment of the general case} \label{1} In this subsection we determine the $r$-range in which we get a conditional improvement of the known bound \eqref{Pxsimpler} along the lines above for general parameters $z$, $Q$ and $N$ satisfying \eqref{QNcond} and \eqref{zrange}.
Recall our bound \eqref{afterPoisson} for $P(x)$. We begin by cutting off the summation over $l$ at $N^{\varepsilon}Qr/\delta$, obtaining 
\begin{equation} \label{Pxcutoff}
P(x)\ll 1+\frac{\delta}{Qr} \left|\sum\limits_{|l| \le N^{\varepsilon}Qr/\delta} \sum\limits_{m\in \mathbb{Z}}W\left(\frac{l\delta}{Qr}\right)\Phi_5\left(\frac{m}{Q^2rz}\right) e\left(-\frac{l\sqrt{m}}{r^{3/2}z^{1/2}}\right) e_r\left(l\sqrt{jm}\right)\right|, 
\end{equation}
where we take the rapid decay of $W$ into account. Further, since $\Phi_5$ has compact support $[C_0,C_1]$ for suitable constants $C_1>C_0>0$, the $m$-range can be restricted to $M_0\le m\le M_1$ with 
\begin{equation} \label{Midef}
M_0:=C_0Q^2rz \quad \mbox{and} \quad M_1:=C_1Q^2rz.
\end{equation}
Next, we remove the oscillatory weight 
$$
W\left(\frac{l\delta}{Qr}\right)\Phi_5\left(\frac{m}{Q^2rz}\right) e\left(-\frac{l\sqrt{m}}{r^{3/2}z^{1/2}}\right)
$$
using partial summation in $l$ and $m$, getting
\begin{equation} \label{prelimPx}
\begin{split}
P(x)\ll & 1+\frac{N^{\varepsilon}Q}{r} \cdot \max\limits_{L\le N^{\varepsilon}Qr/\delta} \max\limits_{M_0\le M\le M_1} \left|
\sum\limits_{|l| \le L}  \sum\limits_{M_0\le m\le M} e_r\left(l\sqrt{jm}\right)\right|\\
\ll &  1+\frac{N^{\varepsilon}Q}{r} \cdot \max\limits_{M\le CQ^2rz} \Sigma(M),
\end{split}
\end{equation}
where 
$$
\Sigma(M):=\sum\limits_{|l| \le N^{\varepsilon}Qr/\delta} \left|   \sum\limits_{M_0\le m\le M} e_r\left(l\sqrt{jm}\right)\right|.
$$
To minimize the $l$-range above, we take $\delta$ as large as possible, i.e. $\delta:=Q^2$, so that
$$
\Sigma(M):=\sum\limits_{|l| \le L_1} \left|   \sum\limits_{M_0\le m\le M} e_r\left(l\sqrt{jm}\right)\right|
$$
with 
\begin{equation} \label{L0def}
L_1:=\frac{N^{\varepsilon}r}{Q}.
\end{equation}
Now applying the Cauchy-Schwarz inequality, we have
\begin{equation*}
\begin{split}
\Sigma(M)^2\ll & (L_1+1) \sum\limits_{|l| \le L_1} \left|   \sum\limits_{M_0\le m\le M} e_r\left(l\sqrt{jm}\right)\right|^2\\ 
= & (L_1+1)\sum\limits_{|l| \le L_1}  \sum\limits_{M_0\le m_1,m_2\le M} e\left(l\cdot \frac{\sqrt{jm_1}-\sqrt{jm_2}}{r}\right).
\end{split}
\end{equation*}
By convention, we take the modular square root of $jm$ always in such a way that $0\le \sqrt{jm}/r<1$. To bound the double sum above, we now apply the double large sieve, Proposition \ref{doublelargesieve}. Here we let the $\alpha_k$'s run over all integers with modulus not exceeding $L_1$ and the $\beta_l$'s over all real numbers $(\sqrt{jm_1}-\sqrt{jm_2})/r$ arising from pairs of integers $(m_1,m_2)$ with $M_0\le m_1,m_2\le M$. Taking $A:=L_1$ and $B:=1$ and setting all coefficients equal to 1, Proposition \ref{doublelargesieve} yields
$$
\Sigma(M)^2\ll \left(L_1+1\right)^2\cdot  \bigg(\sum\limits_{\substack{M_0\le m_1,m_2,m_3,m_4\le M\\ |(\sqrt{jm_1}-\sqrt{jm_2})-(\sqrt{jm_3}-\sqrt{jm_4})|/r\le 1/L_1}} 1 \bigg)^{1/2}.
$$
If 
\begin{equation} \label{rcond1}
r\ge Q,
\end{equation} 
this implies
$$
\Sigma(M)^4\ll L_1^4 \sum\limits_{\substack{1\le m_1,m_2,m_3,m_4\le M\\ |(\sqrt{jm_1}-\sqrt{jm_2})-(\sqrt{jm_3}-\sqrt{jm_4})|\le Q}} 1.
$$
Rewriting the right-hand side above, it follows that
\begin{equation*}
|\Sigma(M)|^4\ll L_1^4\sum\limits_{\substack{\lambda\in \mathbb{Z}\\ |\lambda|\le Q}} \sum\limits_{\substack{1\le m_1,m_2,m_3,m_4\le M\\ (\sqrt{jm_1}-\sqrt{jm_2})-(\sqrt{jm_3}-\sqrt{jm_4})\equiv \lambda \bmod{r}}} 1.
\end{equation*}
Applying Cauchy-Schwarz again, we deduce that
\begin{equation*}
\begin{split}
|\Sigma(M)|^8\ll & L_1^8Q  \sum\limits_{\lambda\bmod{r}} \bigg| \sum\limits_{\substack{1\le m_1,m_2,m_3,m_4\le M\\ (\sqrt{jm_1}-\sqrt{jm_2})-(\sqrt{jm_3}-\sqrt{jm_4})\equiv \lambda \bmod{r}}} 1\bigg|^2\\
= & L_1^8Q  \sum\limits_{\substack{1\le m_1,m_2,m_3,m_4,m_5,m_6,m_7,m_8\le M\\ (\sqrt{jm_1}-\sqrt{jm_2})-(\sqrt{jm_3}-\sqrt{jm_4})\equiv (\sqrt{jm_5}-\sqrt{jm_6})-(\sqrt{jm_7}-\sqrt{jm_8}) \bmod{r}}} \\
\ll & L_1^8Q  E_4(M;j,r),
\end{split}
\end{equation*}
where $E_4\left(M;j,r\right)$ is defined as in \eqref{AE4}. Now Hypothesis \ref{H2} gives
\begin{equation*}
|\Sigma(M)|^8\ll  L_1^8Q \left(\frac{M^8}{r}+M^4\right)N^{\varepsilon}
\end{equation*}
and hence
\begin{equation} \label{Sigma8est}
|\Sigma(M)|\ll \frac{N^{2\varepsilon}r}{Q^{7/8}}\left(\frac{Q^2\Delta^{1/2}}{r^{1/8}}+Q\Delta^{1/4}\right)
\end{equation}
using \eqref{L0def}, \eqref{Midef} and $rz\le \Delta^{1/2}$. Combining \eqref{prelimPx} and \eqref{Sigma8est}, we obtain
\begin{equation} \label{Pxlarge}
P(x)\ll \left(1+Q^{1/8}\left(\frac{Q^{2}\Delta^{1/2}}{r^{1/8}}+Q\Delta^{1/4}\right)\right)N^{\varepsilon}
\end{equation}
upon redefining $\varepsilon$. This is stronger than \eqref{Pxsimpler} if 
$$
N\le Q^{7/2}N^{-\eta}
$$
and  
\begin{equation} \label{thefirstrcond}
r\ge QN^{\eta}
\end{equation}
for some $\eta>0$. Note that \eqref{thefirstrcond}
is more restrictive than \eqref{rcond1}.
Recalling that $r\le N^{1/2}$, we may summarize the results in this subsection by saying that we beat the bound \eqref{Pxsimpler} if 
\begin{equation} \label{largercond}
Q^2\le N\le Q^{7/2}N^{-\eta} \quad \mbox{and} \quad  QN^{\eta}\le r\le N^{1/2}.
\end{equation}

\section{Handling medium-sized $r$'s}
\subsection{Sketch for our particular situation}
Let us return to our special situation when the equations in \eqref{specialsituation} hold. As seen in the previous subsection, if $r$ is smaller than $QN^{\eta}$, then the method above does not give a saving. The reason is that the oscillations of the analytic term become too large. It is therefore reasonable to remove these oscillations using Weyl differencing and then proceed in a similar way as before. Let us here focus on the case when $r=Q$. Then \eqref{specialPx} turns into 
\begin{equation} \label{specialPxmedium}
P(x)\ll 1+\frac{\delta}{Q^2} \left|\sum\limits_{l\in \mathbb{Z}} W\left(\frac{l\delta}{Q^2}\right)\sum\limits_{m\in \mathbb{Z}}\Phi_5\left(\frac{m}{Q^{1/2}}\right) e\left(-\frac{l\sqrt{m}}{Q^{1/4}}\right) e_r\left(l\sqrt{jm}\right)\right|. 
\end{equation}
The right-hand side behaves essentially like 
$$
E=1+\frac{\delta}{Q^2} \left|\sum\limits_{|l|\ll Q^2/\delta} \sum\limits_{m\asymp Q^{1/2}} e\left(-\frac{l\sqrt{m}}{Q^{1/4}}\right) e_r\left(l\sqrt{jm}\right)\right|. 
$$
Our treatment begins with an application of the Cauchy-Schwarz inequality, obtaining
$$
|E|^2\ll 1+\frac{\delta}{Q^2} \sum\limits_{|l|\ll Q^2/\delta}  \left| \sum\limits_{m\asymp Q^{1/2}} e\left(-\frac{l\sqrt{m}}{Q^{1/4}}\right) e_r\left(l\sqrt{jm}\right)\right|^2.
$$
Next, we apply Weyl differencing, Proposition \ref{Weyldif}, to deduce that
$$
|E|^2\ll \frac{Q}{H}+\frac{\delta}{HQ^{3/2}}\sum\limits_{1\le h\le H}\sum\limits_{|l|\ll Q^2/\delta} |\Sigma(h,l)|,
$$
where $H\ll Q^{1/2}$ is a suitable parameter which will be fixed later, and 
\begin{equation} \label{Sigmalh}
\Sigma(h,l):=\sum\limits_{m\asymp Q^{1/2}} e\left(-\frac{l(\sqrt{m+h}-\sqrt{m})}{Q^{1/4}}\right) e_r\left(l(\sqrt{j(m+h)}-\sqrt{jm})\right).
\end{equation}
(To be precise, $m$ runs over some interval $I_h$ depending on $h$ with endpoints in a range of $\asymp Q^{1/2}$.) Our target is to derive a non-trivial estimate of the form
$$
\sum\limits_{|l|\ll Q^2/\delta} |\Sigma(h,l)| \ll \frac{Q^{5/2-\eta}}{\delta}
$$
for some $\eta>0$.
We observe that the amplitude in the analytic term  on the right-hand side of \eqref{Sigmalh} is bounded by
$$
\frac{l(\sqrt{m+h}-\sqrt{m})}{Q^{1/4}}\ll \frac{HQ^{3/2}}{\delta}.
$$ 
It is therefore advisable to choose $\delta$ and $H$ in such a way that 
\begin{equation} \label{Hdelta}
 \frac{HQ^{3/2}}{\delta}\asymp 1,
\end{equation}
so that the analytic term does not oscillate. (Of course, we need to make sure that this consists with the conditions on $\delta$ in \eqref{deltarange}.) Under this choice, our bound for $|E|^2$ simplifies into
 $$
|E|^2\ll \frac{Q}{H}+\sum\limits_{1\le h\le H}\sum\limits_{|l|\ll Q^{1/2}/H} |\Sigma(h,l)|,
$$
and $\Sigma(h,l)$ can essentially be replaced by 
$$
\Sigma'(h,l):=\sum\limits_{m\asymp Q^{1/2}}  e_r\left(l(\sqrt{j(m+h)}-\sqrt{jm})\right).
$$
Now our target becomes to derive a non-trivial estimate of the form
$$
\sum\limits_{1\le h\le H}
\sum\limits_{|l|\ll Q^{1/2}/H} |\Sigma'(h,l)| \ll Q^{1-\eta}.
$$
Applying the double large sieve in a similar fashion as in the previous section, we have
\begin{equation*}
\begin{split}
\left(\sum\limits_{|l|\ll Q^{1/2}/H} |\Sigma'(h,l)|\right)^2 =& \left(\sum\limits_{|l|\ll Q^{1/2}/H} \epsilon_{h,l}\Sigma'(h,l)\right)^{2}\\
\ll & \frac{Q}{H^2} \sum\limits_{\substack{m_1,m_2\asymp Q^{1/2}\\ ||(f_{j,h}(m_1)-f_{j,h}(m_2))/r||\ll H/Q^{1/2}}}  1,
\end{split}
\end{equation*}
where $\epsilon_{h,l}$ are suitable complex number of modulus 1 and 
\begin{equation} \label{fhdef}
f_{j,h}(jm):=\sqrt{j(m+h)}-\sqrt{jm}. 
\end{equation}
Another application of Cauchy-Schwarz gives
\begin{equation*}
\begin{split}
\bigg(\sum\limits_{\substack{m_1,m_2\asymp Q^{1/2}\\ ||(f_{j,h}(m_1)-f_{j,h}(m_2))/r||\le H/Q^{1/2}}}  1\bigg)^2 =& \bigg(\sum\limits_{|\lambda|\ll rH/Q^{1/2}} \sum\limits_{\substack{m_1,m_2\asymp Q^{1/2}\\ f_{j,h}(m_1)-f_{j,h}(m_2)\equiv \lambda\bmod{r}}}  1\bigg)^2\\
\ll &\frac{rH}{Q^{1/2}}\cdot  \sum\limits_{\lambda\bmod r} \bigg( \sum\limits_{\substack{m_1,m_2\asymp Q^{1/2}\\ f_{j,h}(m_1)-f_{j,h}(m_2)\equiv \lambda\bmod{r}}}  1\bigg)^2\\
\ll & \frac{rH}{Q^{1/2}}\cdot  F_2(cQ^{1/2};j,h,r)\\ = & HQ^{1/2} F_2(cQ^{1/2};j,h,r),
\end{split}
\end{equation*}
where $c>0$ is a suitable constant and $F_2(cQ^{1/2};j,h,r)$ is the additive energy defined in \eqref{F2def}. Under  Hypothesis \ref{H3}, this implies 
\begin{equation*} 
\begin{split}
\sum\limits_{\substack{m_1,m_2\asymp Q^{1/2}\\ ||(f_{j,h}(m_1)-f_{j,h}(m_2))/r||\le H/Q^{1/2}}}  1\ll
 H^{1/2}Q^{1/4} \left((h,r)\cdot \frac{Q^2}{r}+Q\right)^{1/2}N^{\varepsilon}\ll H^{1/2}Q^{3/4}N^{\varepsilon}(h,r)^{1/2},
\end{split}
\end{equation*}
and hence, using Lemma \ref{gcdsums}
$$
\sum\limits_{1\le h\le H}
\sum\limits_{|l|\ll Q^{1/2}/H} |\Sigma'(h,l)| \ll \frac{N^{\varepsilon}Q^{7/8}}{H^{3/4}}\cdot \sum\limits_{1\le h\le H} (h,r)^{1/2}\ll Q^{7/8}H^{1/4}(rN)^{\varepsilon},
$$
which is non-trivial if $H$ is small. Altogether, upon redefining $\varepsilon$, we arrive at an estimate of 
$$
P(x)^2\ll \frac{Q}{H}+H^{1/4}Q^{7/8}N^{\varepsilon}. 
$$
The optimal choice is $H:=Q^{1/10}$, giving
$$
P(x)\ll Q^{9/20}N^{\varepsilon}. 
$$
Recalling \eqref{Hdelta}, we take
$$
\delta:=Q^{8/5}.
$$
This is consistent with \eqref{deltarange} because in our situation,
$$
\frac{Q^2\Delta}{z}=\frac{Q^2\Delta}{\Delta^{1/2}/r}=Q^3\Delta^{1/2}=Q^{3/2}< Q^{8/5}=\delta< Q^2.
$$
In the next subsection, we will work out the details of the above calculations for medium-sized $r$'s and general $Q$, $N$ and $z$. It will turn out that if the equations in \eqref{specialsituation} hold, we obtain a non-trivial bound for $P(x)$ if $Q^{1/2+\eta}\le r\le Q^{9/8}$ along these lines.

\subsection{Conditional treatment of the general case} \label{mediumgen} Applying the Cauchy-Schwarz inequality to \eqref{Pxcutoff}, and recalling that $m$ can be restricted to the range $M_0\le m\le M_1$ with $M_0$, $M_1$ as given in \eqref{Midef}, we obtain
\begin{equation*}
P(x)^2\ll 1+\frac{N^{2\varepsilon}}{L_2} \sum\limits_{|l| \le L_2} \left|\sum\limits_{M_0\le m\le M_1} \Phi_5\left(\frac{m}{Q^2rz}\right) e\left(-\frac{l\sqrt{m}}{r^{3/2}z^{1/2}}\right) e_r\left(l\sqrt{jm}\right)\right|^2,
\end{equation*} 
where 
\begin{equation} \label{L2def}
L_2:=\frac{N^{\varepsilon}Qr}{\delta}.
\end{equation}
If 
\begin{equation} \label{rangecond}
L_2\ge 2 \quad \mbox{and} \quad H\le M_1-M_0,
\end{equation}
then using Weyl differencing, Proposition \ref{Weyldif}, we deduce that
\begin{equation} \label{afterWeyl}
P(x)^2\ll \frac{N^{2\varepsilon} Q^4r^2z^2}{H} +\frac{N^{2\varepsilon}Q^2rz}{HL_2} \cdot \sum\limits_{1\le h\le H} 
  \sum\limits_{|l| \le L_2} |\Sigma(h,l)|
\end{equation}
with  
\begin{equation} \label{Sigmahldef}
\Sigma(h,l):=
 \sum\limits_{m\in \mathbb{Z}} V_h\left(\frac{m}{Q^2rz}\right)e\left(-\frac{l(\sqrt{m+h}-\sqrt{m})}{r^{3/2}z^{1/2}}\right) e_r\left(l(\sqrt{j(m+h)}-\sqrt{jm})\right),
\end{equation}
where 
$$
V_h(y):=\Phi_5(y)\Phi_5\left(y+\frac{h}{Q^2rz}\right).
$$
We note that like $\Phi_5$, this function $V_h$ has compact support in $[C_0,C_1]\subset \mathbb{R}_{>0}$.  
We remove the weight
$$
 V_h\left(\frac{m}{Q^2rz}\right)e\left(-\frac{l(\sqrt{m+h}-\sqrt{m})}{r^{3/2}z^{1/2}}\right)
$$
using partial summation in $l$ and $m$, getting
\begin{equation*} 
\sum\limits_{|l| \le L_2} |\Sigma(h,l)| \ll \left(1+\frac{HL_2}{Qr^2z}\right)\cdot  \max\limits_{M_0\le M\le M_1} \Sigma_h(M),
\end{equation*}
where
\begin{equation*}
\Sigma_h(M)= \sum\limits_{|l|\le L_2} \left| \sum\limits_{M_0\le m\le M} e_r\left(l(\sqrt{j(m+h)}-\sqrt{jm})\right)\right|. 
\end{equation*}
We shall choose $\delta$ and $H$ in such a way that 
\begin{equation} \label{L2choice}
\frac{HL_2}{Qr^2z}= 1,
\end{equation}
getting
\begin{equation} \label{remove}
\sum\limits_{|l| \le L_2} |\Sigma(h,l)| \ll \max\limits_{M_0\le M\le M_1} \Sigma_h(M).
\end{equation}
Applying the double large sieve, Proposition \ref{doublelargesieve}, in a similar way as in the previous section, we obtain
\begin{equation} \label{newmark}
|\Sigma_h(M)|^2\ll L_2^2 \sum\limits_{\substack{M_0\le m_1,m_2\le M\\ ||(f_{j,h}(m_1)-f_{j,h}(m_2))/r||\le 1/L_2}} 1, 
\end{equation}
where $f_{j,h}(m)$ is defined as in \eqref{fhdef}. If
\begin{equation} \label{L2condi}
L_2\le r,
\end{equation}
then another application of Cauchy-Schwarz now gives
\begin{equation*}
\begin{split}
|\Sigma_h(M)|^4
=& L_2^4\bigg(\sum\limits_{|\lambda|\le r/L_2} \sum\limits_{\substack{M_0\le m_1,m_2\le M\\ f_{j,h}(m_1)-f_{j,h}(m_2)\equiv \lambda\bmod{r}}}  1\bigg)^2\\
\le &rL_2^3 \sum\limits_{\lambda\bmod r} \bigg( \sum\limits_{\substack{M_0\le m_1,m_2\le M\\ f_{j,h}(m_1)-f_{j,h}(m_2)\equiv \lambda\bmod{r}}}  1\bigg)^2\\
\le & rL_2^3 F_2(M;j,h,r),
\end{split}
\end{equation*}
where the additive energy $F_2(M;j,h,r)$ is defined as in \eqref{F2def}. Under Hypothesis \ref{H3}, a bound of the form
\begin{equation} \label{afterH3}
|\Sigma_h(M)|^4\ll N^{\varepsilon}rL_2^3 \left((h,r)\cdot \frac{M^4}{r}+M^2\right)
\end{equation}
follows. Combining \eqref{afterWeyl}, \eqref{remove} and \eqref{afterH3}, and using Lemma \ref{gcdsums}, we arrive at the bound
\begin{equation*}
\begin{split}
P(x)^2\ll & \frac{N^{2\varepsilon} Q^4r^2z^2}{H} +\frac{N^{2\varepsilon}Q^2rz}{HL_2} \cdot
 N^{\varepsilon/4}r^{1/4}L_2^{3/4} \sum\limits_{1\le h\le H} \left((h,r)\cdot \frac{M_1}{r^{1/4}}+M_1^{1/2}\right)\\
\ll & \frac{N^{2\varepsilon} Q^4r^2z^2}{H} +\frac{N^{3\varepsilon}Q^2rz}{L_2^{1/4}}\cdot  
 \left(M_1+M_1^{1/2}r^{1/4}\right),
\end{split}
\end{equation*}
subject to the conditions in \eqref{rangecond}, \eqref{L2choice} and \eqref{L2condi}. Since
\begin{equation} \label{L2setting}
L_2=\frac{Qr^2z}{H}
\end{equation}
from \eqref{L2choice}, we thus obtain
\begin{equation*}
\begin{split}
P(x)^2\ll & \frac{N^{2\varepsilon} (Q^2rz)^2}{H} +N^{3\varepsilon}Q^{7/4}r^{1/2}z^{3/4}H^{1/4}
 \left(M_1+M_1^{1/2}r^{1/4}\right)\\ \ll & N^{3\varepsilon}\left(\frac{(Q^2rz)^2}{H} +Q^{15/4}r^{3/2}z^{7/4}H^{1/4}+Q^{11/4}r^{5/4}z^{5/4}H^{1/4}\right),
\end{split}
\end{equation*}
where for the second line, we have used \eqref{Midef}. Now we choose $H$ in such a way that the first and second terms above are balanced, i.e.
\begin{equation} \label{H82}
H:=Q^{1/5}r^{2/5}z^{1/5},
\end{equation}
yielding
\begin{equation*}
P(x)^2\ll N^{3\varepsilon}\left(Q^{19/5}r^{8/5}z^{9/5}+Q^{14/5}r^{27/20}z^{13/10}\right)
\end{equation*}
and so 
\begin{equation} \label{Pxmediumbound}
P(x)\ll N^{2\varepsilon}\left(Q^{19/10}\Delta^{9/20}r^{-1/10}+Q^{7/5}\Delta^{13/40}r^{1/40}\right)
\end{equation}
using $rz\le \Delta^{1/2}$ and taking square root. The choice of $H$ in \eqref{H82} together with \eqref{L2choice} give
\begin{equation} \label{L82}
L_2=Q^{4/5}r^{8/5}z^{4/5}.
\end{equation}
Recalling \eqref{Midef} and \eqref{L2setting}, the above choice of $H$ is consistent with the conditions in \eqref{rangecond} if 
\begin{equation} \label{rz82}
rz\ge D\max\left\{Q^{-9/4}r^{1/4},Q^{-1}r^{-1}\right\}
\end{equation}
for a suitable constant $D>0$. Also, \eqref{L2condi} holds if 
$$
L_2=Q^{4/5}r^{8/5}z^{4/5}\le r.
$$
Using $rz\le \Delta^{1/2}$ and $\Delta\le Q^{-2}$, it is readily checked that this holds for all $r\ge 1$.
It remains to check that \eqref{L2setting} is consistent with \eqref{deltarange}, keeping in mind that the relation between $L_2$ and $\delta$ is given by \eqref{L2def}. So we have 
$$
\delta=\frac{N^{\varepsilon}Qr}{L_2},
$$
and hence \eqref{deltarange} translates into the condition
$$
\frac{N^{\varepsilon}r}{Q}\le L_2=Q^{4/5}r^{8/5}z^{4/5}\le \frac{N^{\varepsilon}rz}{Q\Delta}.
$$
This holds iff 
\begin{equation} \label{rzcondi}
rz\ge \max\left\{Q^{9}\Delta^5r^4, N^{5\varepsilon/4}r^{1/4}Q^{-9/4}\right\}. 
\end{equation}
Combining this with \eqref{rz82}, we get a condition of 
\begin{equation} \label{rzmediumsizecondition}
rz\ge \max\left\{Q^{9}\Delta^5r^4, N^{5\varepsilon/4}r^{1/4}Q^{-9/4},Q^{-1}r^{-1}\right\},
\end{equation}
which we keep in mind for our later calculation of the final bound for $P(x)$. In the remainder of this subsection, we check in which $r$-range we get a non-trivial bound for $P(x)$ in our special situation when the equations in \eqref{specialsituation} hold. In this case, \eqref{rzmediumsizecondition} turns into 
$$
Q^{1/2}\le r\le Q^{9/8}. 
$$
Moreover, \eqref{Pxmediumbound} is non-trivial (i.e., it gives a bound for $P(x)$ which is much smaller than $\ll Q^{1/2}$) if $r\ge Q^{1/2+30\varepsilon}$. So we obtain a conditional improvement if 
$$
Q^{1/2+\eta}\le r\le Q^{9/8}
$$ 
for some $\eta>0$ in this situation. 

\section{Handling small $r$'s}
\subsection{Sketch for our particular situation} Again, we return to the situation given in \eqref{specialsituation}. We may view the $m$-sum in \eqref{afterWeyl} as an incomplete weighted exponential sum modulo $r$. If the $m$-range is much larger than the square root $r^{1/2}$ of the modulus, then completing these exponential sums potentially gives a saving. A standard way to do this is to break the $m$-summation into residue classes modulo $r$ and use the Poisson summation formula. In this subsection, we want to look at the particular case when $r=Q^{1/2}$. Recall that the $m$-range is of size $\asymp Q^2rz=Q^2\Delta^{1/2}=Q^{1/2}$ if the equations in \eqref{specialsituation} hold. So in this case, the $m$-sum is nearly a complete exponential sum.  Under the above conditions, \eqref{afterWeyl} turns into
\begin{equation*}
P(x)^2\ll \frac{N^{2\varepsilon} Q}{H} +\frac{N^{2\varepsilon}Q^{1/2}}{HL_2} \cdot \sum\limits_{1\le h\le H} 
  \sum\limits_{|l| \le L_2} |\Sigma(h,l)|
\end{equation*}
with  
\begin{equation} \label{L2reminder}
L_2=\frac{N^{\varepsilon}Q^{3/2}}{\delta}\ge 2, \quad 1\le H\ll Q^{1/2}
\end{equation}
and 
\begin{equation*}
\Sigma(h,l):=
 \sum\limits_{m\in \mathbb{Z}} V_h\left(\frac{m}{Q^{1/2}}\right)e\left(-l(\sqrt{m+h}-\sqrt{m})Q^{1/4}\right) e_r\left(l(\sqrt{j(m+h)}-\sqrt{jm})\right).
\end{equation*}
Breaking the $m$-summation into residue classes modulo $r$ gives
\begin{equation} \label{Sigmanew}
\Sigma(h,l)= \sum\limits_{a=1}^r e_r\left(l(\sqrt{j(a+h)}-\sqrt{ja})\right) \cdot
 \sum\limits_{m\equiv a\bmod{r}} V_h\left(\frac{m}{Q^{1/2}}\right)e\left(-l(\sqrt{m+h}-\sqrt{m})Q^{1/4}\right). 
\end{equation}
We set 
$$
W_h(y):=V_h(y)e\left(-l\left(\sqrt{y+hQ^{-1/2}}-\sqrt{y}\right)Q^{1/2}\right)
$$
to write the $m$-sum above as 
\begin{equation} \label{Whnew}
\sum\limits_{m\equiv a\bmod{r}} V_h\left(\frac{m}{Q^{1/2}}\right)e\left(-l(\sqrt{m+h}-\sqrt{m})Q^{1/4}\right)=
\sum\limits_{m\equiv a\bmod{r}} W_h\left(\frac{m}{Q^{1/2}}\right).
\end{equation}
Now Proposition \ref{Poisum} with $L=Q^{1/2}=r$ and $M=0$ implies that 
\begin{equation} \label{Poissonnew}
\sum\limits_{m\equiv a\bmod{r}} W_h\left(\frac{m}{Q^{1/2}}\right)=\frac{Q^{1/2}}{r} \sum\limits_{n\in \mathbb{Z}} \hat{W}_h\left(\frac{nQ^{1/2}}{r}\right) e_r\left(na\right)= \sum\limits_{n\in \mathbb{Z}} \hat{W}_h(n) e_r\left(na\right),
\end{equation}
where 
\begin{equation*}
\hat{W}_h(n)= \int\limits_{\mathbb{R}} W_h(y)e(-ny){\rm d}y
= 
\int\limits_{\mathbb{R}} V_h(y)e\left(-l\left(\sqrt{y+hQ^{-1/2}}-\sqrt{y}\right)Q^{1/2}-ny\right){\rm d}y.
\end{equation*}
Since $V_h$ has compact support in $[C_0,C_1]\subset \mathbb{R}_{>0}$ and $hQ^{-1/2}\ll 1$, we have
$$
\left|\frac{{\rm d}}{{\rm d}y} \left(l\left(\sqrt{y+hQ^{-1/2}}-\sqrt{y}\right)Q^{1/2}\right)\right|\asymp |lh|\ll L_2H
$$
if $y\in [C_0,C_1]$. Hence, using
$L_2H\ge 1$ and integration by parts, the Fourier integral above becomes negligible if $|n|\ge N^{\varepsilon}L_2H$. Otherwise, this Fourier integral is trivially bounded by $O(1)$. Taking this into account, combining \eqref{Sigmanew}, \eqref{Whnew} and \eqref{Poissonnew}, and  interchanging summations, we arrive at
\begin{equation*}
\Sigma(h,l)\ll \sum\limits_{n\le N^{\varepsilon}L_2H} |\mathcal{E}_{j,h}(l,n)|, 
\end{equation*}
where $\mathcal{E}_h(n)$ is the complete exponential sum
\begin{equation} \label{complete}
\mathcal{E}_{j,h}(l,n):=\sum\limits_{a=1}^r e_r\left(l(\sqrt{j(a+h)}-\sqrt{ja})+na\right).  
\end{equation}
It follows that
\begin{equation*}
P(x)^2\ll \frac{N^{2\varepsilon} Q}{H} +\frac{N^{2\varepsilon}Q^{1/2}}{HL_2} \cdot \sum\limits_{1\le h\le H} 
  \sum\limits_{|l| \le L_2}\sum\limits_{|n|\le N^{\varepsilon}L_2H} |\mathcal{E}_{j,h}(l,n)|.
\end{equation*}
In the appendix, we will prove the following bound.

\begin{Lemma}\label{expsumbound}
Let $r\in \mathbb{N}$ and $j,h,l,n\in \mathbb{Z}$, where $(j,r)=1$. Then
\begin{equation} \label{theexpsumbound}
\mathcal{E}_{j,h}(l,n)\ll r^{4/5+\varepsilon}(h,r)(l,r)^{1/5}.
\end{equation}
\end{Lemma}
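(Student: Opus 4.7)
The plan is to reduce $\mathcal{E}_{j,h}(l,n)$ to a complete exponential sum in a single variable with a rational-function phase modulo $r$, and then to estimate it prime-power by prime-power using the Bombieri bound (Proposition \ref{primemoduli}) and the Cochrane--Zheng bound (Proposition \ref{primepowermoduli}). The starting move is to rewrite the outer sum over $a\bmod r$ as a sum over pairs $(k,\tilde k)\bmod r$ satisfying $\tilde k^2-k^2\equiv jh\pmod r$, using $a\equiv \overline{j}k^2$. Introducing $u=\tilde k-k$ and $v=\tilde k+k$, the constraint becomes $uv\equiv jh\pmod r$; for odd $r$ the map $(k,\tilde k)\mapsto (u,v)$ is a bijection, and using $uv\equiv jh$ to simplify $n\overline{j}k^2=n\overline{4j}(v-u)^2$ gives
\[
\mathcal{E}_{j,h}(l,n)\;=\;e_r(-nh\overline 2)\sum_{\substack{u,v\bmod r\\ uv\equiv jh}}e_r\!\bigl(lu+A u^2+A v^2\bigr),\qquad A=n\overline{4j}.
\]
Parametrizing by $u$ alone via $v\equiv jh\overline u\pmod{r/(u,r)}$, with multiplicity $(u,r)$ arising from the choices of $v$, this becomes $\sum_u e_r(lu+Au^2+B\overline u^{\,2})$ with $B=njh^2\overline 4$, taken with appropriate multiplicity. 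The cumulative multiplicity is $(h,r)$ in the worst case, and this produces the factor $(h,r)$ in the final bound.

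Next I would apply the Chinese remainder theorem to factor $\mathcal{E}_{j,h}(l,n)=\prod_p\mathcal{E}^{(p)}_{j,h}(l,n)$ along the prime-power decomposition $r=\prod p^{m_p}$, reducing to a local estimate of the form $|\mathcal{E}^{(p)}_{j,h}(l,n)|\ll p^{(4/5+\varepsilon)m_p}(h,p^{m_p})(l,p^{m_p})^{1/5}$. For $m_p=1$, Proposition \ref{primemoduli} applied to the rational function $f(u)=lu+Au^2+Bu^{-2}$ supplies a bound $\ll p^{1/2}$, easily sufficient. For $m_p\ge 2$, Proposition \ref{primepowermoduli} applies with $f'(u)=(2Au^4+lu^3-2B)/u^3$, so the critical points modulo $p$ are the roots of $g(u)=2Au^4+lu^3-2B$ in $(\mathbb Z/p\mathbb Z)^*$. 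A case analysis on the $p$-adic valuations of $A$, $B$ and $l$ (which are determined by those of $n$, $h$ and $l$) controls both the multiplicity $\nu$ of any critical point and the order $t=\mathrm{ord}_p(f')$ entering the Cochrane--Zheng estimate $p^{t/(\nu+1)+m(1-1/(\nu+1))}$; a bound of $\ll p^{4m_p/5}$ per critical point in the worst case yields the leading factor $r^{4/5+\varepsilon}$, while the additional dependence on $\mathrm{ord}_p(l)$ assembles into the $(l,r)^{1/5}$ factor after multiplying across primes.

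Two subcases require separate care. First, the case $p=2$, where the change of variables $k=(v-u)\overline 2$ is not available: one instead tracks the parities of $\tilde k$ and $k$ (which must be opposite when $h$ is odd), reduces to a sum on a modulus of half the size, and argues analogously. Second, the degenerate case $p^{\beta}\|h$ with $2\beta\ge m_p$, in which $B\equiv 0\pmod{p^{m_p}}$; the sum collapses to a quadratic Gauss sum estimated by Proposition \ref{Gausssums}, and the trivial bound is more than enough because $(h,p^{m_p})=p^{m_p}$ already absorbs $p^{4m_p/5}$. The bookkeeping for the multiplicity $(u,r)$ when splitting the one-variable sum according to $d=(u,r)$ is routine but has to respect these subcases, and Lemma \ref{gcdsums} is what allows the resulting $d$-sum to be absorbed into $r^{\varepsilon}$.

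The main obstacle I expect is the critical-point multiplicity analysis for $m_p\ge 2$: one must show that across all combinations of $\mathrm{ord}_p(l)$, $\mathrm{ord}_p(h)$ and $\mathrm{ord}_p(n)$ the pair $(\nu,t)$ satisfies
\[
\frac{t}{\nu+1}+m_p\Bigl(1-\frac{1}{\nu+1}\Bigr)\;\le\;\frac{4m_p}{5}+\frac{\mathrm{ord}_p(l)}{5}+O(\varepsilon m_p)
\]
uniformly, and to verify the analogous inequality after the modified parametrization in the $p=2$ case. Packaging all these prime-power estimates through CRT produces precisely $r^{4/5+\varepsilon}(h,r)(l,r)^{1/5}$, as claimed in \eqref{theexpsumbound}.
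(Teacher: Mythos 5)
Your overall strategy is essentially the paper's: the substitution $u=\tilde k-k$, $v=\tilde k+k$ turning the constraint into $uv\equiv jh\bmod r$, the reduction to a complete one-variable sum with phase $lu+Au^2+B\overline{u}^{\,2}$, multiplicativity/CRT, and then Bombieri (Proposition \ref{primemoduli}) together with Cochrane--Zheng (Proposition \ref{primepowermoduli}) at prime powers, where the degree-four numerator of $f'$ forces $\nu\le 4$ (hence the exponent $4/5$) and the gcd factor yields $(l,r)^{1/5}$. For the portion of the sum with $(u,r)=1$ your reduction is correct and coincides with the paper's sum $\mathcal{G}$.

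The genuine gap is in the terms with $d=(u,r)>1$, which contribute exactly when $d\mid (h,r)$. There $\overline{u}$ does not exist modulo $r$, so the claimed shape $\sum_u e_r(lu+Au^2+B\overline{u}^{\,2})$ ``taken with appropriate multiplicity'' is not even defined, and the multiplicity cannot simply be recorded as $(u,r)$ and bounded trivially: for fixed $d$ the block has about $r/d$ admissible $u$ and $d$ lifts of $v$ for each, so a trivial treatment gives $O(r)$, which already exceeds the target $r^{4/5+\varepsilon}(h,r)(l,r)^{1/5}$ in general. One needs cancellation both over the $v$-lifts and in the remaining $u$-sum. The paper's mechanism, which is the core of its proof and is absent from your outline, is: write $u=db_1$ with $(b_1,r/d)=1$; the sum over the $d$ lifts of $v$ is a quadratic Gauss sum which, after extracting $f=(n,d)$ and $g=(r_1,d_1)$, is evaluated exactly by Proposition \ref{Gausssums}, contributing only $\sqrt{d_2}$ together with a phase in $\overline{b_1}^{\,2}$ to a modulus $d_2\mid d$; the reciprocity $1/(r_2d_2)\equiv \overline{r_2}/d_2+\overline{d_2}/r_2\bmod 1$ for Kloosterman fractions then merges this phase with the phase modulo $r_2$ into a single one-variable sum of the same shape $\mathcal{G}$ modulo $r_1=r/d$, and only after estimating that sum by $r_1^{4/5+\varepsilon}(l,r_1)^{1/5}$ and summing over $d\mid(h,r)$ does the factor $(h,r)$ emerge. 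Calling this ``routine bookkeeping of the multiplicity'' skips the essential step. Two smaller omissions: the cases $t=\mathrm{ord}_p(f')=m$ and $t=m-1$, where Cochrane--Zheng requires $m\ge t+2$ (and $m\ge t+3$ for $p=2$), must be treated separately (trivially, respectively by dividing out $p^{t}$ and applying Bombieri); and in your degenerate case $2\beta\ge m_p$ one has $(h,p^{m_p})=p^{\beta}$, not $p^{m_p}$, though the trivial bound does still suffice there.
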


Moreover, if $l=0$, we trivially have
\begin{equation} \label{thetrivialexpsumbound}
\mathcal{E}_{j,h}(0,n)=\begin{cases} r \mbox{ if } n\equiv 0\bmod{r}\\ 0 \mbox{ otherwise.} \end{cases}
\end{equation}
From \eqref{theexpsumbound} and \eqref{thetrivialexpsumbound}, we deduce that
\begin{equation*}
\begin{split}
P(x)^2\ll N^{2\varepsilon}\bigg(\frac{Q}{H} +& \frac{Q^{1/2}}{HL_2} \cdot \sum\limits_{1\le h\le H} 
  \sum\limits_{1\le |l| \le L_2}\sum\limits_{|n|\le N^{\varepsilon}L_2H} r^{4/5+\varepsilon}(h,r)(l,r)^{1/5}+\\
&  \frac{Q^{1/2}}{L_2} \cdot r\left(1+\frac{N^{\varepsilon}L_2H}{r}\right)\bigg)
\end{split}
\end{equation*}
which, using Lemma \ref{gcdsums}, implies the bound 
\begin{equation*}
P(x)^2\ll N^{10\varepsilon}\left(\frac{Q}{H} +\frac{Q^{1/2}}{L_2} \cdot \left(r+L_2^2Hr^{4/5}\right)\right)=
N^{10\varepsilon}\left(\frac{Q}{H} +\frac{Q}{L_2} +L_2HQ^{9/10}\right),
\end{equation*} 
where we recall that we assumed $r=Q^{1/2}$. 
We balance terms above, choosing  
\begin{equation} \label{newL2choice}
H_2:=Q^{1/30} \quad \mbox{and} \quad L_2:=Q^{1/30},
\end{equation}
thus obtaining the non-trivial bound
$$
P(x)\ll N^{5\varepsilon}Q^{29/60}
$$
after taking square root.
We still need to check that the above choice of $L_2$ is consistent with the conditions on $\delta$ in \eqref{deltarange}. From \eqref{L2reminder} and \eqref{newL2choice}, we have 
$$
\delta=N^{\varepsilon}Q^{22/15},
$$
and in our situation the $\delta$-range in \eqref{deltarange} turns into
$$
\frac{Q^2\Delta}{z}=\frac{Q^2\Delta}{\sqrt{\Delta}/r}=Q\le \delta\le Q^2.
$$
This is fine if $\varepsilon$ is small enough, and once again, we have obtained a non-trivially bound for $P(x)$ - this time at the point $r=Q^{1/2}$. Again, we will carry out the above method for general $Q$, $N$ and $z$ in the following subsection. In the situation of \eqref{specialsituation}, we will obtain a non-trivial bound for $P(x)$ if $Q^{\eta}\le r\le Q^{15/26-\eta}$ for some $\eta>0$. 

\subsection{Treatment of the general case} \label{3} We begin like in subsection \ref{mediumgen}. 
Breaking the $m$-sum on the right-hand side of \eqref{Sigmahldef}  into residue classes modulo $r$ gives
\begin{equation} \label{Sigmanewgen}
\Sigma(h,l)= \sum\limits_{a=1}^r e_r\left(l(\sqrt{j(a+h)}-\sqrt{ja})\right) \cdot
 \sum\limits_{m\equiv a\bmod{r}} V_h\left(\frac{m}{Q^2rz}\right)e\left(-\frac{l(\sqrt{m+h}-\sqrt{m})}{r^{3/2}z^{1/2}}\right).
\end{equation}
We set 
$$
W_h(y):=V_h(y)e\left(-l\left(\sqrt{y+\frac{h}{Q^2rz}}-\sqrt{y}\right)\cdot \frac{Q}{r}\right)
$$
to write the $m$-sum on the right-hand side of \eqref{Sigmanewgen} as
\begin{equation} \label{Whnewgen}
\sum\limits_{m\equiv a\bmod{r}} V_h\left(\frac{m}{Q^2rz}\right)e\left(-\frac{l(\sqrt{m+h}-\sqrt{m})}{r^{3/2}z^{1/2}}\right)=\sum\limits_{m\equiv a\bmod{r}} W_h\left(\frac{m}{Q^2rz}\right).
\end{equation}
Now Proposition \ref{Poisum} with $L=Q^2rz$ and $M=0$ implies that 
\begin{equation} \label{Poissonnewgen}
\sum\limits_{m\equiv a\bmod{r}} W_h\left(\frac{m}{Q^{1/2}}\right)=Q^2z \sum\limits_{n\in \mathbb{Z}} \hat{W}_h\left(nQ^2z\right) e_r\left(na\right),
\end{equation}
where 
\begin{equation*}
\hat{W}_h(nQ^2z)= \int\limits_{\mathbb{R}} W_h(y)e(-nQ^2zy){\rm d}y
= 
\int\limits_{\mathbb{R}} V_h(y)e\left(-l\left(\sqrt{y+\frac{h}{Q^2rz}}-\sqrt{y}\right)\cdot \frac{Q}{r}-nQ^2zy\right){\rm d}y.
\end{equation*}
Since $V_h$ has compact support in $[C_0,C_1]\subset \mathbb{R}_{>0}$ and $h/(Q^2rz)\ll 1$, we have
$$
\left|\frac{{\rm d}}{{\rm d} y} \left(l\left(\sqrt{y+\frac{h}{Q^2rz}}-\sqrt{y}\right)\cdot \frac{Q}{r}\right)\right|\asymp \frac{|lh|}{Q^2rz}\cdot \frac{Q}{r}\ll \frac{L_2H}{Qr^2z}
$$
if $y\in [C_0,C_1]$. Suppose that
\begin{equation} \label{newL2Hcond}
\frac{L_2H}{Qr^2z}\ge 1.
\end{equation}
Under this condition, using integration by parts, the Fourier integral above becomes negligible if $|n|\ge N^{\varepsilon}L_2H/(Q^3r^2z^2)$. For the remaining $n$'s, this Fourier integral is trivially bounded by $O(1)$. Taking these observations into account, combining \eqref{Sigmanewgen}, \eqref{Whnewgen} and \eqref{Poissonnewgen}, and  interchanging summations, we arrive at
\begin{equation} \label{Sigmaboundgensmall}
\Sigma(h,l)\ll Q^2z\sum\limits_{n\le N^{\varepsilon}L_2H/(Q^3r^2z^2)} |\mathcal{E}_{j,h}(l,n)|, 
\end{equation}
where $\mathcal{E}_h(n)$ is the complete exponential sum defined in \eqref{complete}.
Combining \eqref{afterWeyl}, \eqref{theexpsumbound} and  \eqref{Sigmaboundgensmall}, we obtain
\begin{equation*}
\begin{split}
P(x)^2\ll  \frac{N^{2\varepsilon} Q^4r^2z^2}{H} + & \frac{N^{2\varepsilon}Q^4rz^2}{HL_2}  \cdot \sum\limits_{1\le h\le H} 
  \sum\limits_{1\le |l| \le L_2}\sum\limits_{|n|\le N^{\varepsilon}L_2H/(Q^3r^2z^2)}r^{4/5+\varepsilon}(h,r)(l,r)^{1/5}+\\ 
&  \frac{N^{2\varepsilon}Q^4rz^2}{L_2}  \cdot r\left(1+\frac{N^{\varepsilon}L_2H}{Q^3r^3z^2}\right).
\end{split}
\end{equation*}
Using Lemma \ref{gcdsums}, 
this implies the bound 
\begin{equation*}
P(x)^2\ll N^{10\varepsilon}\left(\frac{Q^4r^2z^2}{H} +\frac{Q^4rz^2}{L_2}  \cdot \left(r+\frac{L_2^2H}{Q^3r^2z^2}\cdot r^{4/5}\right)\right)= N^{10\varepsilon}\left(\frac{Q^4r^2z^2}{H} +\frac{Q^4r^2z^2}{L_2}+\frac{QL_2H}{r^{1/5}}\right).
\end{equation*} 
We balance the terms above, choosing 
\begin{equation} \label{L2smallr}
H:=Qr^{11/15}z^{2/3} \quad \mbox{and} \quad L_2:=Qr^{11/15}z^{2/3},
\end{equation}
thus obtaining 
\begin{equation} \label{finalboundsmall}
P(x)\ll N^{5\varepsilon}Q^{3/2}r^{19/30}z^{2/3}\le N^{5\varepsilon}Q^{3/2}\Delta^{1/3}r^{-1/30}.
\end{equation}
This beats \eqref{Pxsimpler} if 
$$
N^{4\varepsilon}Q^{3/2}\Delta^{1/3}r^{-1/30}\le Q^2\Delta^{1/2},
$$
i.e.,
\begin{equation} \label{r1}
r\ge N^{48\varepsilon}Q^{-15}\Delta^{-5}. 
\end{equation}
We still need to check for which $r$-ranges the above choices of $H$ and $L_2$ are consistent with \eqref{deltarange}, \eqref{rangecond} and \eqref{newL2Hcond}. Using \eqref{L2smallr}, the inequalities in \eqref{rangecond} hold if
\begin{equation} \label{rz1}
rz\ge D\max\left\{Q^{-3/2}r^{-1/10},Q^{-3}r^{1/5}\right\}
\end{equation}
for a suitable constant $D>0$. 
Again using \eqref{L2smallr}, the inequality \eqref{newL2Hcond} holds if
\begin{equation} \label{rz3}
rz\ge Q^{-3}r^{13/5}. 
\end{equation}
From \eqref{L2def} and \eqref{L2smallr}, we infer that
$$
\delta=\frac{N^{\varepsilon}Qr}{L_2}=N^{\varepsilon}r^{4/15}z^{-2/3}.
$$
Hence, \eqref{deltarange} turns into 
\begin{equation*}
\frac{Q^2\Delta}{z}\le N^{\varepsilon}r^{4/15}z^{-2/3}\le Q^2. 
\end{equation*}
These inequalities are satisfied if
\begin{equation} \label{rz4}
rz\ge \max\left\{Q^6\Delta^3 r^{1/5},N^{2\varepsilon}Q^{-3}r^{7/5}\right\}.
\end{equation}
We record that \eqref{rz1}, \eqref{rz3}, \eqref{rz4} hold if 
\begin{equation} \label{rzsmallconds}
rz\ge N^{2\varepsilon}\max\left\{Q^{-3/2}r^{-1/10},Q^{-3}r^{13/5},Q^6\Delta^3 r^{1/5}\right\}.
\end{equation}
In our special situation when the equations in \eqref{specialsituation} are satisfied, this holds if
$$
Q^{\eta}\le r\le Q^{15/26-\eta}
$$ 
for a suitable $\eta>0$ depending on $\varepsilon>0$. Both parameters $\varepsilon$ and $\eta$ can be taken arbitrarily small. 

\section{Handling very small $r$'s}
\subsection{Sketch for our particular situation}
It remains to handle very small $r$'s. We look at the particular case when $r=1$ and the equations in \eqref{specialsituation} hold. Now we will {\it utilize} oscillations in the analytical term. We start from \eqref{Pxcutoff}, which in this case takes the form  
\begin{equation*} 
P(x)\ll 1+\frac{\delta}{Q} \left|\sum\limits_{|l| \le N^{\varepsilon}Q/\delta} \sum\limits_{m\in \mathbb{Z}}W\left(\frac{l\delta}{Q}\right)\Phi_5\left(\frac{m}{Q^{1/2}}\right) e\left(-l\sqrt{m}Q^{3/4}\right)\right|. 
\end{equation*}
We remove the weight functions using the triangle inequality and partial summation, getting
\begin{equation*} 
P(x)\ll 1+\frac{\delta}{Q} \sum\limits_{|l| \le N^{\varepsilon}Q/\delta} \max\limits_{M_0\le M\le M_1} \left| \sum\limits_{M_0\le m\le M} e\left(-l\sqrt{m}Q^{3/4}\right)\right| 
\end{equation*}
with $M_0$ and $M_1$ as in \eqref{Midef}, i.e. $M_i=C_iQ^{1/2}$ in our situation. For $M_0\le y\le M_1$ and $k\in \mathbb{N}$, we have
$$
\frac{{\rm d}^{k+2}}{{\rm d}y^{k+2}} \left(-l\sqrt{y}Q^{3/4}\right)\asymp |l|Q^{-k/2}.
$$
Now applying Proposition \ref{analyticexpsumbound} with $I:=[M_0,M]$, $f(y):=-l\sqrt{y}Q^{3/4}$ and $k:=1$, we have
$$
\sum\limits_{M_0\le m\le M} e\left(-l\sqrt{m}Q^{3/4}\right)\ll |l|^{1/6}Q^{5/12}\ll \frac{N^{\varepsilon}Q^{7/12}}{\delta^{1/6}}
$$
if $l\not=0$. It follows that
\begin{equation*} 
P(x)\ll 1+\frac{\delta}{Q^{1/2}}+\frac{N^{2\varepsilon}Q^{7/12}}{\delta^{1/6}}. 
\end{equation*}
We balance the second and third terms, choosing
$$
\delta:=Q^{13/14},
$$
which yields the non-trivial estimate
\begin{equation*} 
P(x)\ll Q^{3/7}. 
\end{equation*}
The above choice of $\delta$ consists with \eqref{deltarange}, which in our situation takes the form
$$
Q^{1/2}\le \delta\le Q^2.
$$
An analogous treatment of general $Q$, $N$ and $z$ in the following subsection will lead to a non-trivial bound for $P(x)$ in an $r$-range which in the case of \eqref{specialsituation} takes the form $r\le Q^{1/4-\eta}$ for some $\eta>0$. 

\subsection{Treatment of the general case} \label{4} Again, we start from \eqref{Pxcutoff}. Dividing the $m$-summation into residue classes modulo $r$ and using the triangle inequality and partical summation to remove the weight functions, we arrive at 
\begin{equation} \label{arrival}
P(x)\ll 1+\frac{\delta}{Qr} \sum\limits_{a=1}^r \sum\limits_{|l| \le N^{\varepsilon}Qr/\delta} \max\limits_{M_0\le M\le M_1} \left|\sum\limits_{\substack{M_0\le m\le M\\ m\equiv a\bmod{r}}} e\left(-\frac{l\sqrt{m}}{r^{3/2}z^{1/2}}\right) \right|,
\end{equation}
with $M_0$ and $M_1$ as in \eqref{Midef}. We rewrite the $m$-sum as 
$$
\sum\limits_{\substack{M_0\le m\le M\\ m\equiv a\bmod{r}}} e\left(-\frac{l\sqrt{m}}{r^{3/2}z^{1/2}}\right)=\sum\limits_{\substack{n\in \mathbb{Z}\\ M_0\le rn+a\le M}} e\left(-\frac{l\sqrt{rn+a}}{r^{3/2}z^{1/2}}\right)
$$
and note that 
$$
\frac{{\rm d}^{k+2}}{{\rm d}y^{k+2}} \left(-\frac{l\sqrt{ry+a}}{r^{3/2}z^{1/2}}\right)\asymp \frac{|l|}{r^{3/2}z^{1/2}}\cdot r^{k+2}\cdot (Q^2rz)^{-k-3/2}=\frac{|l|}{Q^{-2k-3}rz^{k+2}}
$$
if $M_0\le y\le M_1$. Now applying Proposition \ref{analyticexpsumbound} with 
$$
I:=\left[\frac{M_0-a}{r},\frac{M-a}{r}\right], \quad f(y):=\frac{-l\sqrt{ry+a}}{r^{3/2}z^{1/2}}\quad \mbox{and} \quad k:=1,
$$ 
we have
\begin{equation} \label{wehave}
\sum\limits_{\substack{n\in \mathbb{Z}\\ M_0\le rn+a\le M}} e\left(-\frac{l\sqrt{rn+a}}{r^{3/2}z^{1/2}}\right)\ll 
Q^2z\cdot \left(\frac{|l|}{Q^5rz^3}\right)^{1/6}+(Q^2z)^{3/4}+(Q^2z)^{1/4}\cdot \left(\frac{|l|}{Q^5rz^3}\right)^{-1/4}
\end{equation}
if $l\not=0$ and 
\begin{equation} \label{rverysmall1}
r\le M_1-M_0=(C_1-C_0)Q^2rz.
\end{equation}
Plugging the bound \eqref{wehave} into \eqref{arrival}, and estimating the contribution of $l=0$ trivially, we obtain
\begin{equation*}
\begin{split}
P(x)\ll & 1+\frac{\delta}{Qr} \cdot Q^2rz+ N^{2\varepsilon}r\left(Q^2z\cdot \left(\frac{1}{Q^4 z^3\delta}\right)^{1/6}+(Q^2z)^{3/4}+(Q^2z)^{1/4}\cdot \left(\frac{1}{Q^4z^3\delta}\right)^{-1/4}\right)\\
\ll &N^{2\varepsilon}\left(1+Qz\delta+\frac{Q^{4/3}rz^{1/2}}{\delta^{1/6}}+Q^{3/2}rz^{3/4}+Q^{3/2}rz\delta^{1/4}\right).
\end{split}
\end{equation*}
We balance the second and third terms in the last line above, choosing
$$
\delta:=Q^{2/7}r^{6/7}z^{-3/7},
$$
thus obtaining
\begin{equation} \label{Pxverysmallr}
P(x)\ll N^{2\varepsilon}\left(1+Q^{9/7}r^{6/7}z^{4/7}+Q^{3/2}rz^{3/4}+Q^{11/7}r^{17/14}z^{25/28}\right).
\end{equation}
Using $rz\le\Delta^{1/2}$, this implies 
\begin{equation} \label{Pxverysmallrconsequence}
P(x)\ll N^{2\varepsilon}\left(1+Q^{9/7}\Delta^{2/7}r^{2/7}+Q^{3/2}\Delta^{3/8}r^{1/4}+Q^{11/7}\Delta^{25/56}r^{9/28}\right).
\end{equation}
This beats \eqref{Pxsimpler} if 
\begin{equation} \label{rverysmall2}
r\le N^{-4\varepsilon}\min\left\{Q^{5/2}\Delta^{3/4},Q^2\Delta^{1/2},Q^{4/3}\Delta^{1/6}\right\}.
\end{equation}
We still need to check under which conditions the above choice of $\delta$ consists with \eqref{deltarange}. 
Under this choice, \eqref{deltarange} turns into
$$
\frac{Q^2\Delta}{z}\le Q^{2/7}r^{6/7}z^{-3/7}\le Q^2,
$$
which is the case if 
\begin{equation} \label{rzverysmall}
rz\ge \max\left\{Q^3\Delta^{7/4}r^{-1/2},Q^{-4}r^3\right\}.
\end{equation}
In the situation of \eqref{specialsituation}, reviewing \eqref{rverysmall1}, \eqref{rverysmall2} and \eqref{rzverysmall},
we obtain a non-trivial bound for $P(x)$ if $r\le Q^{1/4-\eta}$ for some $\eta>0$. 

\section{Proof of the main result}
In the previous sections, we obtained different estimates for $P(x)$ in different $r$-ranges. Some of these estimates come with a lower bound condition on $rz$. If $rz$ falls below this bound, we simply use the Lemma \ref{previousPxbound} to bound $P(x)$. This way, we obtain four different bounds which we then compare. The details are carried out below, where we redefine $\varepsilon$ suitably. 

In subsection \ref{1}, we obtained \eqref{Pxlarge} under the condition \eqref{rcond1}, which is
\begin{equation} \label{P1}
P(x)\ll \left(1+Q^{17/8}\Delta^{1/2}r^{-1/8}+Q^{9/8}\Delta^{1/4}\right)N^{\varepsilon} \quad \mbox{if } r\ge Q. 
\end{equation}
In subsection \ref{mediumgen}, we obtained \eqref{Pxmediumbound} under the condition \eqref{rzcondi}, which is 
\begin{equation} \label{P2}
\begin{split}
P(x)\ll & \left(Q^{19/10}\Delta^{9/20}r^{-1/10}+Q^{7/5}\Delta^{13/40}r^{1/40}\right)N^{\varepsilon} \quad \mbox{if } \\
rz\ge  & \max\left\{Q^{9}\Delta^5r^4, r^{1/4}Q^{-9/4},Q^{-1}r^{-1}\right\}N^{\varepsilon}. 
\end{split}
\end{equation}
In subsection \ref{3}, we obtained \eqref{finalboundsmall} under the conditions in  \eqref{rzsmallconds}, which is
\begin{equation} \label{P3}
\begin{split}
P(x)\ll & Q^{3/2}\Delta^{1/3}r^{-1/30}N^{\varepsilon} \quad \mbox{if }\\  
rz\ge & \max\left\{Q^{-3/2}r^{-1/10},Q^{-3}r^{13/5},Q^6\Delta^3 r^{1/5}\right\}N^{\varepsilon}.
\end{split}
\end{equation}
In subsection \ref{4}, we obtained \eqref{Pxverysmallrconsequence} under the conditions in \eqref{rzverysmall}, which is 
\begin{equation} \label{P4}
\begin{split}
P(x)\ll & \left(1+Q^{9/7}\Delta^{2/7}r^{2/7}+Q^{3/2}\Delta^{3/8}r^{1/4}+Q^{11/7}\Delta^{25/56}r^{9/28}\right)N^{\varepsilon} \quad \mbox{if }\\
rz\ge & \max\left\{Q^3\Delta^{7/4}r^{-1/2},Q^{-4}r^3\right\}.
\end{split}
\end{equation}
Using Lemma \ref{previousPxbound} if $rz$ falls below the lower bounds in \eqref{P2}, \eqref{P3} and \eqref{P4}, these results imply
\begin{equation} \label{P2'}
P(x)\ll  \left(Q^{19/10}\Delta^{9/20}r^{-1/10}+Q^{7/5}\Delta^{13/40}r^{1/40}+Q^{11}\Delta^5r^4+ r^{1/4}Q^{-1/4}+Qr^{-1}+1+Q^3\Delta\right)N^{\varepsilon},
\end{equation}
\begin{equation} \label{P3'}
P(x)\ll \left(Q^{3/2}\Delta^{1/3}r^{-1/30}+Q^{1/2}r^{-1/10}+Q^{-1}r^{13/5}+Q^8\Delta^3 r^{1/5}+1+Q^3\Delta\right)N^{\varepsilon}
\end{equation}
and 
\begin{equation} \label{P4'}
\begin{split}
& P(x)\ll \\
& \left(Q^{9/7}\Delta^{2/7}r^{2/7}+Q^{3/2}\Delta^{3/8}r^{1/4}+Q^{11/7}\Delta^{25/56}r^{9/28}+Q^5\Delta^{7/4}r^{-1/2}+Q^{-2}r^3+1+Q^3\Delta\right)N^{\varepsilon},
\end{split}
\end{equation}
respectively. We recall that we use \eqref{P1} for large, \eqref{P2'} for medium, \eqref{P3'} for small and \eqref{P4'} for very small $r$. To get the precise $r$-ranges, we compare large terms in \eqref{P1} and \eqref{P2'}, in \eqref{P2'} and \eqref{P3'}, and in \eqref{P3'} and \eqref{P4'} for these situations.   
We observe that 
\begin{equation} \label{range1}
Q^{11}\Delta^5r^4\le Q^{17/8}\Delta^{1/2}r^{-1/8}
\quad \mbox{if } r\le Q^{-71/33}\Delta^{-12/11},
\end{equation}
\begin{equation} \label{range2}
Q^{-1}r^{13/5}\le Q^{19/10}\Delta^{9/20}r^{-1/10} \quad \mbox{ if } r\le Q^{29/27}\Delta^{1/6}, 
\end{equation}
and 
\begin{equation} \label{range3}
Q^{9/7}\Delta^{2/7}r^{2/7}\le Q^{3/2}\Delta^{1/3}r^{-1/30} \quad \mbox{if } r\le Q^{45/67}\Delta^{10/67} 
\end{equation}
So we use
\begin{itemize}
\item \eqref{P1} if   $Q^{-71/33}\Delta^{-12/11}< r\le \Delta^{-1/2}$,
\item \eqref{P2'} if  $Q^{29/27}\Delta^{1/6}<r\le  Q^{-71/33}\Delta^{-12/11}$,
\item \eqref{P3'} if $Q^{45/67}\Delta^{10/67}<r\le Q^{29/27}\Delta^{1/6}$,
\item \eqref{P4'} if $r\le Q^{45/67}\Delta^{10/67}$
\end{itemize}
and note that 
$$
1< Q^{45/67}\Delta^{10/67}<Q^{29/27}\Delta^{1/6}<Q<Q^{-71/33}\Delta^{-12/11}\le \Delta^{-1/2}
$$
if 
$$
N^{39/142}<Q< N^{9/26},
$$
which we want to assume in the following. Hence, we record the following.

\begin{Proposition}\label{Propmain}  If $N^{39/142}<Q<N^{9/26}$, then under the Hypotheses \ref{H2} and \ref{H3}, we have 
\begin{equation*} 
\begin{split}
P(x)\ll & \left(1+Q^{17/8}\Delta^{1/2}r^{-1/8}+Q^{9/8}\Delta^{1/4}\right)N^{\varepsilon} \\ &\quad \mbox{\rm if } Q^{-71/33}\Delta^{-12/11}< r\le \Delta^{-1/2},\\
\ll & \left(Q^{19/10}\Delta^{9/20}r^{-1/10}+Q^{7/5}\Delta^{13/40}r^{1/40}+Q^{11}\Delta^5r^4+ r^{1/4}Q^{-1/4}+Qr^{-1}+1+Q^3\Delta\right)N^{\varepsilon} \\ &  \quad \mbox{\rm if }Q^{29/27}\Delta^{1/6}<r\le Q^{-71/33}\Delta^{-12/11},\\
\ll & \left(Q^{3/2}\Delta^{1/3}r^{-1/30}+Q^{1/2}r^{-1/10}+Q^{-1}r^{13/5}+Q^8\Delta^3 r^{1/5}+1+Q^3\Delta\right)N^{\varepsilon}\\ & \quad \mbox{\rm if }Q^{45/67}\Delta^{10/67}<r\le Q^{29/27}\Delta^{1/6},\\
\ll & \left(Q^{9/7}\Delta^{2/7}r^{2/7}+Q^{3/2}\Delta^{3/8}r^{1/4}+Q^{11/7}\Delta^{25/56}r^{9/28}+\right.\\ & \left. \quad Q^5\Delta^{7/4}r^{-1/2}+Q^{-2}r^3+1+Q^3\Delta\right)N^{\varepsilon} \\ & \quad \mbox{\rm if }r\le Q^{45/67}\Delta^{10/67}.
\end{split}
\end{equation*}
\end{Proposition}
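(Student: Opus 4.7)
The plan is that this proposition is essentially a bookkeeping consolidation of the four estimates \eqref{P1}, \eqref{P2'}, \eqref{P3'}, \eqref{P4'} derived in the preceding subsections \ref{1}, \ref{mediumgen}, \ref{3}, \ref{4}, coupled with a determination of the $r$-range in which each estimate is the sharpest. Nothing new needs to be proved analytically; the task is to stitch the four regimes together so that they cover the interval $1\le r\le \Delta^{-1/2}$ without gap.

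First I would recall that \eqref{P1} is valid for $r\ge Q$ unconditionally, while \eqref{P2'}, \eqref{P3'}, \eqref{P4'} are obtained from \eqref{P2}, \eqref{P3}, \eqref{P4} by adding the bound of Lemma \ref{previousPxbound}, which is $\ll (1+Q^2rz+Q^3\Delta)N^{\varepsilon}$, so as to cover the sub-range where the respective lower bound on $rz$ fails. In that sub-range one has $Q^2rz$ bounded by the corresponding maximum appearing in the condition, and after multiplication by $N^{\varepsilon}$ this is absorbed into the extra terms listed in \eqref{P2'}, \eqref{P3'}, \eqref{P4'} (namely $Q^{11}\Delta^5 r^4+r^{1/4}Q^{-1/4}+Qr^{-1}$ for \eqref{P2'}, $Q^{1/2}r^{-1/10}+Q^{-1}r^{13/5}+Q^8\Delta^3r^{1/5}$ for \eqref{P3'}, and $Q^5\Delta^{7/4}r^{-1/2}+Q^{-2}r^3$ for \eqref{P4'}). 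This step is purely mechanical; I would verify each case by substituting the relevant upper bound for $rz$ into $Q^2rz$.

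Next, to select which of the four bounds to use for a given $r$, I would compare the dominant terms across neighbouring regimes. Specifically, the three transition points in \eqref{range1}, \eqref{range2}, \eqref{range3} come from equating the leading "previous-bound" term in the weaker regime with the leading "new" term in the stronger regime: $Q^{11}\Delta^5 r^4$ vs.\ $Q^{17/8}\Delta^{1/2}r^{-1/8}$, $Q^{-1}r^{13/5}$ vs.\ $Q^{19/10}\Delta^{9/20}r^{-1/10}$, and $Q^{9/7}\Delta^{2/7}r^{2/7}$ vs.\ $Q^{3/2}\Delta^{1/3}r^{-1/30}$. Solving these three elementary inequalities yields the breakpoints $Q^{-71/33}\Delta^{-12/11}$, $Q^{29/27}\Delta^{1/6}$, $Q^{45/67}\Delta^{10/67}$, which define the four intervals in the statement. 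I would present these as routine computations.

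Finally I would check the ordering
$$
1 < Q^{45/67}\Delta^{10/67} < Q^{29/27}\Delta^{1/6} < Q < Q^{-71/33}\Delta^{-12/11} \le \Delta^{-1/2},
$$
which guarantees that the four sub-ranges tile $[1,\Delta^{-1/2}]$ and that \eqref{P1} (requiring $r\ge Q$) is used exactly where applicable. Writing $\Delta=1/N$, each of these strict inequalities becomes a condition of the form $N^{\alpha}<Q<N^{\beta}$, and their intersection reduces, by a direct but slightly tedious numerical exercise with the exponents, to $N^{39/142}<Q<N^{9/26}$. The main (mild) obstacle is simply to carry out this last arithmetic correctly and in the right order; no new analytic input is needed.
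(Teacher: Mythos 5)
Your proposal matches the paper's own proof in every essential step: it recognizes that \eqref{P2'}, \eqref{P3'}, \eqref{P4'} arise by falling back on Lemma \ref{previousPxbound} (and substituting the $rz$-threshold into $Q^2rz$) when the lower bound on $rz$ fails, determines the three breakpoints by exactly the comparisons \eqref{range1}--\eqref{range3}, and verifies the chain of inequalities ordering the breakpoints under the hypothesis $N^{39/142}<Q<N^{9/26}$. This is the same bookkeeping argument the paper carries out, and your account of it is accurate.
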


Since we are particularly interested in the point $Q=N^{1/3}$, we want to work out which final bound we obtain in this case. If $Q=N^{1/3}$, then $\Delta=Q^{-3}$, and the above simplifies into
\begin{equation*} 
\begin{split}
P(x)\ll & \left(1+Q^{5/8}r^{-1/8}+Q^{3/8}\right)N^{\varepsilon} \quad \mbox{\rm if } Q^{37/33}< r\le Q^{3/2},\\
\ll & \left(Q^{11/20}r^{-1/10}+Q^{17/40}r^{1/40}+Q^{-4}r^4+Q^{-1/4}r^{1/4}+Qr^{-1}+1\right)N^{\varepsilon} \quad \mbox{\rm if }Q^{31/54}<r\le Q^{37/33},\\
\ll & \left(Q^{1/2}r^{-1/30}+Q^{-1}r^{13/5}+1\right)N^{\varepsilon}\quad \mbox{\rm if }Q^{15/67}<r\le Q^{31/54},\\
\ll & \left(Q^{3/7}r^{2/7}+Q^{3/8}r^{1/4}+Q^{13/56}r^{9/28}+Q^{-2}r^3+1\right)N^{\varepsilon} \quad \mbox{\rm if }r\le Q^{15/67}
\end{split}
\end{equation*}
and further into
\begin{equation*}
\begin{split}
P(x)\ll & Q^{16/33}N^{\varepsilon} \quad \mbox{if } 
Q^{37/33}< r\le Q^{3/2},\\
\ll & Q^{133/270}N^{\varepsilon}\quad \mbox{if } Q^{31/54}<r\le Q^{37/33},\\
\ll & Q^{133/270}N^{\varepsilon} \quad \mbox{if } Q^{15/67}<r\le Q^{31/54},\\
\ll & Q^{33/67}N^{\varepsilon} \quad \mbox{if } r\le Q^{15/67}.
\end{split}
\end{equation*}

So we have the following.

\begin{Corollary}
If $N=Q^3$, then under the Hypotheses \ref{H2} and \ref{H3}, we have $P(x)\ll Q^{133/270+\varepsilon}=Q^{1/2-1/135+\varepsilon}$. 
\end{Corollary}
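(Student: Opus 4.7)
The plan is to specialize Proposition \ref{Propmain} to $N = Q^{3}$, which forces $\Delta = 1/N = Q^{-3}$, and then maximize each of the four piecewise bounds over $r$ in its respective interval. Under this substitution, the four threshold values $\Delta^{-1/2}$, $Q^{-71/33}\Delta^{-12/11}$, $Q^{29/27}\Delta^{1/6}$, $Q^{45/67}\Delta^{10/67}$ collapse to the pure powers $Q^{3/2}$, $Q^{37/33}$, $Q^{31/54}$, $Q^{15/67}$, and the four piecewise bounds reduce exactly to the simplified expressions displayed immediately before the corollary. So it only remains to verify that each simplified bound is $\ll Q^{133/270 + \varepsilon}$ throughout its interval.

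Every term in every one of the four bounds is a monomial in $r$ (monotone increasing or decreasing), so its supremum on any closed interval is attained at an endpoint, and the entire computation is reduced to checking exponents. On $Q^{37/33} < r \le Q^{3/2}$, the dominant term $Q^{5/8}r^{-1/8}$ is decreasing and is maximized at $r = Q^{37/33}$ with value $Q^{16/33}$, which is smaller than $Q^{133/270}$. On $Q^{31/54} < r \le Q^{37/33}$, the dominant term is the decreasing $Q^{11/20}r^{-1/10}$, maximized at $r = Q^{31/54}$ with value $Q^{11/20 - 31/540} = Q^{133/270}$; the subordinate terms $Q^{17/40}r^{1/40}, Q^{-4}r^{4}, Q^{-1/4}r^{1/4}, Qr^{-1}$ are all routinely checked, at both endpoints, to be at most $Q^{133/270}$. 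On $Q^{15/67} < r \le Q^{31/54}$, the increasing term $Q^{-1}r^{13/5}$ is maximized at $r = Q^{31/54}$ with value $Q^{-1 + (31/54)(13/5)} = Q^{133/270}$, which exactly matches the previous regime and is precisely the structural reason for the choice of the threshold $Q^{31/54}$ and for the final exponent; the other terms $Q^{1/2}r^{-1/30}, Q^{1/2}r^{-1/10}, Q^{-1}r^{1/5}$ stay strictly below $Q^{133/270}$ on this interval. Finally, on $r \le Q^{15/67}$, the dominant term $Q^{3/7}r^{2/7}$ is maximized at $r = Q^{15/67}$ with value $Q^{3/7 + (15/67)(2/7)} = Q^{33/67} = Q^{1/2 - 1/134}$, which is strictly smaller than $Q^{1/2 - 1/135} = Q^{133/270}$.

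Combining these four estimates yields $P(x) \ll Q^{133/270 + \varepsilon}$, and the identity $133/270 = 1/2 - 1/135$ then gives the corollary. There is no genuine analytical obstacle at this step: Proposition \ref{Propmain} already encapsulates all of the Fourier analysis, Weyl differencing, double large sieve, exponential sum evaluation, and the conditional use of Hypotheses \ref{H2} and \ref{H3}. The only remaining risk is purely arithmetic bookkeeping — one must correctly identify the dominant monomial in each of the four intervals, verify that the two dominant monomials in the medium and small regimes meet exactly at $r = Q^{31/54}$ with the common exponent $133/270$, and confirm that every subordinate term in every interval stays below this balance value.
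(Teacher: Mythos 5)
Your proposal is correct and follows essentially the same route as the paper: you specialize Proposition \ref{Propmain} to $N=Q^{3}$ (so $\Delta = Q^{-3}$), simplify the four threshold exponents and the four displayed bounds, and then observe that each bound is a sum of monomials in $r$ which you maximize at the appropriate endpoints, arriving at $Q^{16/33}$, $Q^{133/270}$, $Q^{133/270}$ and $Q^{33/67}$ in the four regimes with $\max = Q^{133/270} = Q^{1/2 - 1/135}$. This is precisely the bookkeeping the paper carries out just before the corollary, so there is no gap and no genuinely different idea.
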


This together with \eqref{Zhaobound} and \eqref{tildeDelta} gives our main result, Theorem \ref{mainresult}.

\section{An interesting connection to a short character sum} \label{interesting}
In addition to our considerations on the large sieve with square moduli, we will reveal an interesting connection between additive energies of modular square roots and certain short character sums with a cubic form in four variables. This connection comes by a completion argument using the Poisson summation formula. Employing the existing energy bounds gives a rather strong estimate for the said character sums if the summation ranges are about the square root of the modulus. Here we want to confine ourselves to the case when $r$ is an odd prime. 

Recalling \eqref{AE}, we may write
\begin{equation*}
E_2(R;j,r)= \sum\limits_{\substack{(k_1,k_2,k_3,k_4)\bmod r\\ k_1+k_2\equiv k_3+k_4\bmod{r}\\ \left\{\overline{j}k_i^2/r\right\}\le R/r \text{ for } i=1,2,3,4}} 1=\sum\limits_{\substack{(k_1,k_2,k_3,k_4)\bmod r\\ k_1+k_2\equiv k_3+k_4\bmod{r}}} \prod\limits_{i=1}^4 \chi_{(0,R/r]}\left(\left\{\frac{\overline{j}k_i^2}{r}\right\}\right),
\end{equation*}
where $\{y\}=y-[y]$ is the fractional part of $y\in \mathbb{R}$ and for an interval $I$, $\chi_I(t)$ is its characteristic function. We will smooth this characteristic function out, replacing it by  a 1-periodic function of the form
$$
\phi_{\nu}(y):=\sum\limits_{h\in \mathbb{Z}} \Phi\left(\frac{y+h}{\nu}\right), 
$$ 
where $\nu:=R/r$ and $\Phi$ is a Schwartz class function satisfying $0\le \Phi \le 1$  and supported in $[-1,1]$. We define the corresponding weighted additive energy as 
\begin{equation} \label{Etildedef}
\tilde{E}_2(R;j,r):=\sum\limits_{\substack{(k_1,k_2,k_3,k_4)\bmod r\\ k_1+k_2\equiv k_3+k_4\bmod{r}}} \prod\limits_{i=1}^4 \phi_{R/r}\left(\frac{\overline{j}k_i^2}{r}\right).
\end{equation}
We also define the additive energy 
\begin{equation*}
E_2'(R;j,r)= \sum\limits_{\substack{(k_1,k_2,k_3,k_4)\bmod r\\ k_1+k_2\equiv k_3+k_4\bmod{r}\\ \left|\left|k_i^2/r\right|\right|\le R/r \text{ for } i=1,2,3,4}} 1=\sum\limits_{\substack{(k_1,k_2,k_3,k_4)\bmod r\\ k_1+k_2\equiv k_3+k_4\bmod{r}}} \prod\limits_{i=1}^4 \chi_{[0,R/r]}\left(\left|\left|\frac{\overline{j}k_i^2}{r}\right|\right|\right).
\end{equation*}
The only differences between the definitions of $E_2(R;j,r)$ and $E_2'(R;j,r)$ are that in the definition of $E_2'(R;j,r)$, the fractional part is replaced by the distance to the nearest integer, and $k_i$'s congruent to $0$ modulo $r$ are no longer excluded. In the case of prime moduli $r$, the method of Kerr, Shkredov, Shparlinski and Zaharescu in \cite{KSSZ} gives the same bound for $E_2'(R;j,r)$  as the bound \eqref{E2result} for $E_2(R;j,r)$, namely
\begin{equation} \label{E2result2}
E_2'(R;j,r)\ll \left(\frac{R^{3/2}}{r^{1/2}}+1\right)R^{2+\varepsilon}. 
\end{equation}
(If $r$ is not a prime, then the exclusion of $k_i\equiv 0 \bmod{r}$ may matter, as we have seen in the discussion in section 2.) 
 We note that
\begin{equation} \label{E2tildeE2}
E_2(R;j,r)\le E_2'(R;j,r)\ll \tilde{E}_2(R;j,r)\ll \tilde{E}_2'(Rr^{\varepsilon};j,r),
\end{equation}
where the last inequality arises from the rapid decay of $\Phi$. 
Applying Poisson summation, Proposition \ref{Poisum} with the modulus 1, $L=\nu$ and $M=-y$, we have 
$$
\phi_{\nu}(y):=\nu\sum\limits_{h\in \mathbb{Z}} \hat{\Phi}\left(h\nu\right)e(hy).
$$
Plugging this into the right-hand side of \eqref{Etildedef}, and interchanging summations, we obtain
\begin{equation} \label{Etildedef1}
\tilde{E}_2(R;j,r)=\left(\frac{R}{r}\right)^4 \sum\limits_{h_1,h_2,h_3,h_4\in \mathbb{Z}}  \prod\limits_{i=1}^4 \hat{\Phi}\left(\frac{h_iR}{r}\right) \cdot S_4(j;h_1,h_2,h_3,h_4),
\end{equation}
where
$$
S_4(j;h_1,h_2,h_3,h_4):=\sum\limits_{\substack{(k_1,k_2,k_3,k_4)\bmod r\\ k_1+k_2\equiv k_3+k_4\bmod{r}}} e_r\left(\overline{j}\left(h_1k_1^2+h_2k_2^2+h_3k_3^3+h_4k_4^2\right)\right),
$$
which is a complete exponential sum in three variables ({\it three} variables because $k_1,k_2,k_3,k_4$ are interconnected by the relation $k_1+k_2\equiv k_3+k_4\bmod{r}$).  We will evaluate this exponential sum in the following.
Clearly, we can write
\begin{equation*}
S_4(j;h_1,h_2,h_3,h_4)=\sum\limits_{l=1}^r S_2(l;j;h_1,h_2)S_2(l;j;h_3,h_4),
\end{equation*}
where 
$$
S_2(l;j;a_1,a_2):=\sum\limits_{\substack{k_1,k_2\\ k_1+k_2\equiv l\bmod{r}}} e_r\left(\overline{j}\left(a_1k_1^2+a_2k_2^2\right)\right).
$$
We remove the variable $k_2$ above using $k_2\equiv l-k_1\bmod{r}$, thus getting a quadratic Gauss sum
$$
S_2(l;j;a_1,a_2)=\sum\limits_{k=1}^r e_r\left(\overline{j}\left((a_1+a_2)k^2-2a_2lk+a_2l^2\right)\right).
$$
We consider two cases below.\\ \\
{\bf Case 1:} $a_1+a_2\equiv 0 \bmod{r}$. Then
$$
S_2(l;j;a_1,a_2)=\begin{cases} 0 \mbox{  if } a_2l\not\equiv 0\bmod{r},\\ \\ r \mbox{ if } a_2l\equiv 0\bmod{r} \end{cases}
$$
by the orthogonality relation for additive characters. \\ \\
{\bf Case 2:} $a_1+a_2\not\equiv 0\bmod{r}$. Then using Proposition \ref{Gausssums} gives
\begin{equation} \label{secondcase}
S_2(l;j;a_1,a_2)=\epsilon_r\cdot \sqrt{r}\cdot \left(\frac{j}{r}\right)\cdot \left(\frac{a_1+a_2}{r}\right) \cdot 
e_r\left(\overline{j}a_2\left(1-\frac{a_2}{a_1+a_2}\right)l^2\right),
\end{equation}
where, for convenience, we have written 
$$
a_2\overline{(a_1+a_2)}=\frac{a_2}{a_1+a_2}.
$$ 
We can simplify the exponential term in Case 2 and combine both cases, getting
$$
S_2(l;j;a_1,a_2)=\epsilon_r\cdot \sqrt{r}\cdot \left(\frac{j}{r}\right)\cdot \left(\frac{a_1+a_2}{r}\right) \cdot 
e_r\left(\overline{j}l^2\cdot \frac{a_1a_2}{a_1+a_2}\right)+r\delta(a_1,a_2,l),
$$
where we set $a_1a_2/(a_1+a_2):=0$ if $a_1+a_2\equiv 0\bmod{r}$ and 
$$
\delta(a_1,a_2,l):=\begin{cases} 1 & \mbox{ if } a_1\equiv a_2\equiv 0\bmod{r} \mbox{ and } l\not\equiv 0\bmod{r},\\
1 & \mbox{ if } a_1\equiv -a_2\bmod{r} \mbox{ and } l\equiv 0\bmod{r},\\
0 & \mbox{ otherwise.}\end{cases}
$$
Hence, we get
\begin{equation*}
\begin{split}
S_4(j;h_1,h_2,h_3,h_4)= & \sum\limits_{l=1}^r\epsilon_r^2\cdot r \cdot \left(\frac{(h_1+h_2)(h_3+h_4)}{r}\right) \cdot 
e_r\left(\overline{j}l^2\left(\frac{h_1h_2}{h_3+h_4}+\frac{h_3h_4}{h_3+h_4}\right)\right)+\\
& \sum\limits_{l=1}^r\epsilon_r\cdot r^{3/2}\cdot \left(\frac{j}{r}\right)\cdot \left(\frac{h_1+h_2}{r}\right) \cdot 
e_r\left(\overline{j}l^2\cdot \frac{h_1h_2}{h_1+h_2}\right)\cdot \delta(h_3,h_4,l)+\\
& \sum\limits_{l=1}^r \epsilon_r\cdot r^{3/2}\cdot \left(\frac{j}{r}\right)\cdot \left(\frac{h_3+h_4}{r}\right) \cdot 
e_r\left(\overline{j}l^2\cdot \frac{h_3h_4}{h_3+h_4}\right)\cdot \delta(h_1,h_2,l)+\\
& \sum\limits_{l=1}^r r^2\cdot \delta(h_1,h_2,l)\delta(h_3,h_4,l). 
\end{split}
\end{equation*}
Again using Proposition \ref{Gausssums}, we have
$$
\sum\limits_{l=1}^r e_r\left(\overline{j}l^2\left(\frac{h_1h_2}{h_3+h_4}+\frac{h_3h_4}{h_3+h_4}\right)\right)=\epsilon_r\sqrt{r}\cdot \left(\frac{j}{r}\right)\left(\frac{h_1h_2/(h_1+h_2)+h_3h_4/(h_3+h_4)}{r}\right),
$$
$$
\sum\limits_{l=1}^r e_r\left(\overline{j}l^2\cdot \frac{h_1h_2}{h_1+h_2}\right)\delta(h_3,h_4,l)=\epsilon_r\sqrt{r}\cdot \left(\frac{j}{r}\right) \left(\frac{h_1h_2/(h_1+h_2)}{r}\right)\cdot \gamma(h_3,h_4)+\tilde{\gamma}(h_3,h_4),
$$
$$
\sum\limits_{l=1}^r e_r\left(\overline{j}l^2\cdot \frac{h_3h_4}{h_3+h_4}\right)\delta(h_1,h_2,l)=\epsilon_r\sqrt{r}\cdot \left(\frac{j}{r}\right) \left(\frac{h_3h_4/(h_3+h_4)}{r}\right)\cdot \gamma(h_1,h_2)+\tilde{\gamma}(h_1,h_2),
$$
$$
 \sum\limits_{l=1}^r\delta(h_1,h_2,l)\delta(h_3,h_4,l)=r\gamma(h_1,h_2)\gamma(h_3,h_4)+\tilde{\gamma}(h_1,h_2)\hat{\gamma}(h_3,h_4)+\hat{\gamma}(h_1,h_2)\tilde{\gamma}(h_3,h_4),
$$
where 
$$
\gamma(a_1,a_2):=\begin{cases} 1 & \mbox{ if } a_1\equiv a_2\equiv 0 \bmod{r},\\ 0 & \mbox{ otherwise},\end{cases}
$$
$$
\tilde{\gamma}(a_1,a_2):=\begin{cases} 1 & \mbox{ if } 0\not\equiv a_1\equiv -a_2 \bmod{r},\\ 0 & \mbox{ otherwise,}\end{cases}
$$
and 
$$
\hat{\gamma}(a_1,a_2):=\begin{cases} 1 & \mbox{ if } a_1\equiv -a_2 \bmod{r},\\ 0 & \mbox{ otherwise.}\end{cases}
$$
Putting the above together and simplifying gives
\begin{equation*}
\begin{split}
S_4(j;h_1,h_2,h_3,h_4)= & \epsilon_r^3\cdot r^{3/2} \cdot \left(\frac{j}{r}\right)\left(\frac{h_1h_2h_3+h_1h_2h_4+h_1h_3h_4+h_2h_3h_4}{r}\right) +\\
& \epsilon_r^2 \cdot r^{2}\cdot \left(\frac{h_1h_2}{r}\right) \cdot \gamma(h_3,h_4)+\\
& \epsilon_r\cdot r^{3/2}\cdot \left(\frac{j}{r}\right)\cdot \left(\frac{h_1+h_2}{r}\right) \cdot \tilde{\gamma}(h_3,h_4)+\\
& \epsilon_r^2 \cdot r^{2}\cdot \left(\frac{h_3h_4}{r}\right) \cdot \gamma(h_1,h_2)+\\
& \epsilon_r\cdot r^{3/2}\cdot \left(\frac{j}{r}\right)\cdot \left(\frac{h_3+h_4}{r}\right) \cdot \tilde{\gamma}(h_1,h_2)+\\
& r^3\gamma(h_1,h_2)\gamma(h_3,h_4)+r^2\tilde{\gamma}(h_1,h_2)\hat{\gamma}(h_3,h_4)+r^2\hat{\gamma}(h_1,h_2)\tilde{\gamma}(h_3,h_4).
\end{split}
\end{equation*} 
Plugging this into \eqref{Etildedef1}, we obtain
\begin{equation} \label{relation}
\begin{split}
\frac{\tilde{E}_2(R;j,r)}{(R/r)^4}=& r^3+\epsilon_r^3\cdot r^{3/2} \cdot \left(\frac{j}{r}\right)\cdot  \sum\limits_{h_1,h_2,h_3,h_4\in \mathbb{Z}}  \prod\limits_{i=1}^4\hat{\Phi}\left(\frac{h_iR}{r}\right)\cdot\left(\frac{h_1h_2h_3+h_1h_2h_4+h_1h_3h_4+h_2h_3h_4}{r}\right)\\ & + O\left(r^2\cdot \left(\frac{r}{R}\right)^2+r^{3/2}\cdot \left(\frac{r}{R}\right)^3\right)
\end{split}
\end{equation} 
if $R\le r$. Using \eqref{E2tildeE2} and estimating the right-hand side trivially, this gives the following new energy bound upon noting that $Rr^{1/2}\ll \left(R^2r^{3/2}\right)^{1/2}\ll R^2+r^{3/2}$.

\begin{Theorem} If $r$ is an odd prime, $j\in\mathbb{Z}$ with $(j,r)=1$ and $1\le R\le r$, then 
$$
E_2(R;j,r)\ll \frac{R^4}{r}+R^2+r^{3/2}.
$$
\end{Theorem}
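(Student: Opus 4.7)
My plan is to read the bound off directly from the completion identity \eqref{relation}, which has already been assembled in the preceding discussion by evaluating the exponential sum $S_4(j;h_1,h_2,h_3,h_4)$. That identity expresses $\tilde{E}_2(R;j,r)$ as $(R/r)^4$ times the sum of a main term $r^3$, a Jacobi-symbol sum weighted by a product of four Fourier coefficients and scaled by $r^{3/2}$, and two additional error terms of sizes $r^2(r/R)^2$ and $r^{3/2}(r/R)^3$. No further cancellation or structural input is required; the theorem is obtained by estimating everything trivially and redistributing the prefactor $(R/r)^4$.

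The central estimate is a trivial bound for the character sum
\begin{equation*}
\Sigma := \sum\limits_{h_1,h_2,h_3,h_4 \in \mathbb{Z}} \prod\limits_{i=1}^{4} \hat{\Phi}\!\left(\frac{h_iR}{r}\right)\left(\frac{h_1h_2h_3 + h_1h_2h_4 + h_1h_3h_4 + h_2h_3h_4}{r}\right).
\end{equation*}
Since $\hat{\Phi}$ is Schwartz, $\sum_{h\in\mathbb{Z}} |\hat{\Phi}(hR/r)| \ll r/R$, and the Jacobi symbol is bounded by $1$, so $|\Sigma| \ll (r/R)^4$. Multiplying by $(R/r)^4 \cdot r^{3/2}$, the character-sum contribution to $\tilde{E}_2(R;j,r)$ is $\ll r^{3/2}$.

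Next I multiply through the error terms in \eqref{relation} by $(R/r)^4$: these contribute $R^2$ and $Rr^{1/2}$ respectively, and the latter is absorbed via the AM-GM inequality $Rr^{1/2}\ll (R^2 r^{3/2})^{1/2}\ll R^2+r^{3/2}$ already indicated by the author. Combining with the main term $(R/r)^4\cdot r^3 = R^4/r$, I obtain
\begin{equation*}
\tilde{E}_2(R;j,r) \ll \frac{R^4}{r} + R^2 + r^{3/2}.
\end{equation*}
Finally, by the domination chain \eqref{E2tildeE2} one has $E_2(R;j,r)\ll \tilde{E}_2(R;j,r)$, which yields the claimed inequality.

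Since all the substantive work is already embedded in the derivation of \eqref{relation}, there is no real obstacle left: the main term, the character-sum term, and the two error terms are each handled by direct estimation. The only point requiring slight care is the transition from $E_2$ to $\tilde{E}_2$, which must not introduce an $r^{\varepsilon}$ loss; this is arranged by choosing the smoothing function $\Phi$ to majorize the characteristic function of $[0,R/r]$ on the unit interval, so that the passage $E_2(R;j,r)\le E_2'(R;j,r)\ll \tilde{E}_2(R;j,r)$ costs only a multiplicative constant.
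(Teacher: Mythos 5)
Your proposal matches the paper's own derivation essentially verbatim: both proofs read the bound directly off the identity \eqref{relation}, estimate the Jacobi-symbol sum trivially via the rapid decay of $\hat{\Phi}$ to get a contribution of size $r^{3/2}$, absorb the cross term $Rr^{1/2}$ into $R^2+r^{3/2}$ by AM--GM, and pass from $E_2$ to $\tilde{E}_2$ via the chain \eqref{E2tildeE2}. The proof is correct and no new idea is introduced beyond what the paper already spells out.
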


This is superior over \eqref{E2result} if $R\ge r^{4/7}$, which is of moderate interest. More interesting is the converse direction in which we bound the above character sum using \eqref{E2result2} and \eqref{E2tildeE2}, getting the bound
\begin{equation*}
\begin{split}
& \sum\limits_{h_1,h_2,h_3,h_4\in \mathbb{Z}}  \prod\limits_{i=1}^4\hat{\Phi}\left(\frac{h_iR}{r}\right)\cdot  \left(\frac{h_1h_2h_3+h_1h_2h_4+h_1h_3h_4+h_2h_3h_4}{r}\right)\\
\ll & r^{-3/2}\cdot \left(\left(\frac{r}{R}\right)^4 \cdot \left(\frac{R^{3/2}}{r^{1/2}}+1\right)R^{2+\varepsilon}r^{\varepsilon}+r^{3}+
r^2\cdot \left(\frac{r}{R}\right)^2+r^{3/2}\cdot \left(\frac{r}{R}\right)^3\right)\\
\ll & \left(\frac{r^2}{R^{1/2}}+\frac{r^{5/2}}{R^2}+r^{3/2}+\frac{r^3}{R^3}\right)(Rr)^{\varepsilon}
\end{split}
\end{equation*}
if $1\le R\le r$. Writing $M:=r/R$, this implies the following result.

\begin{Theorem} If $r$ is an odd prime, $1\le M\le r$ and $W:\mathbb{R}\rightarrow \mathbb{C}$ is a Schwartz class function whose Fourier transform satisfies $0\le \hat{W}\le 1$ and is supported in $[-1,1]$, then
\begin{equation*}
\begin{split}
&  \sum\limits_{h_1,h_2,h_3,h_4\in \mathbb{Z}}  \prod\limits_{i=1}^4 W\left(\frac{h_i}{M}\right)\cdot \left(\frac{h_1h_2h_3+h_1h_2h_4+h_1h_3h_4+h_2h_3h_4}{r}\right)\\
\ll & \left(M^{1/2}r^{3/2}+M^2r^{1/2}+M^3\right)r^{\varepsilon}.
\end{split}
\end{equation*}
\end{Theorem}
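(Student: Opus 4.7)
The theorem is essentially a formal consequence of the identity \eqref{relation} combined with the unconditional bound \eqref{E2result2}; the plan is to invert \eqref{relation} so as to solve for the character sum, and then feed in the energy bound.

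First I would, given $W$, set $\Phi := \hat{W}$ and $R := \lfloor r/M \rfloor$ (the rounding is harmless because the weights have rapid decay). Then $\Phi$ is Schwartz, takes values in $[0,1]$, and is supported in $[-1,1]$, so it satisfies the assumptions on $\Phi$ made at the opening of Section \ref{interesting}; by Fourier inversion $\hat\Phi(y) = W(-y)$, so the weight $\hat\Phi(h_iR/r)$ appearing in \eqref{relation} becomes $W(-h_i/M)$. The simultaneous substitution $h_i \mapsto -h_i$ for all $i$ converts each $W(-h_i/M)$ into $W(h_i/M)$ and negates the cubic form $h_1h_2h_3 + h_1h_2h_4 + h_1h_3h_4 + h_2h_3h_4$; this only multiplies the Jacobi symbol by the constant $(-1/r) \in \{\pm 1\}$, so the character sum in the theorem and the one appearing in \eqref{relation} have the same absolute value.

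Next I would solve \eqref{relation} for the character sum, obtaining that its modulus is
\[
\ll r^{-3/2}\left(\tilde{E}_2(R;j,r)(r/R)^4 + r^3 + r^2(r/R)^2 + r^{3/2}(r/R)^3\right),
\]
and invoke \eqref{E2tildeE2} together with \eqref{E2result2} to bound $\tilde{E}_2(R;j,r) \ll (R^{7/2}r^{-1/2} + R^2)r^{\varepsilon}$. Simplifying then yields a bound of order $(r^2/R^{1/2} + r^{5/2}/R^2 + r^{3/2} + r^3/R^3)r^{\varepsilon}$, and substituting $R = r/M$ converts these four terms into $M^{1/2}r^{3/2}$, $M^2 r^{1/2}$, $r^{3/2}$, and $M^3$ respectively; the third is absorbed into the first since $M \ge 1$, producing the claimed estimate.

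There is no genuine analytic obstacle in this step; the proof is really bookkeeping around \eqref{relation} and \eqref{E2result2}. The only items requiring a little attention are the Fourier inversion convention, the parity of the cubic form under the reflection $h_i \mapsto -h_i$, and the verification that the $r^3$ principal term produced by inverting \eqref{relation} is absorbed by the upper bound on $\tilde{E}_2(R;j,r)(r/R)^4$ coming from \eqref{E2result2}. The hardest conceptual step, namely proving the energy bound itself, has already been supplied by \cite{KSSZ}.
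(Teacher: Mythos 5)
Your proposal is correct and is essentially identical to the paper's own derivation: the paper obtains the theorem precisely by solving \eqref{relation} for the character sum, bounding $\tilde{E}_2(R;j,r)$ via \eqref{E2tildeE2} and \eqref{E2result2}, simplifying to $\left(r^2R^{-1/2}+r^{5/2}R^{-2}+r^{3/2}+r^3R^{-3}\right)(Rr)^{\varepsilon}$, and substituting $M=r/R$. Your extra bookkeeping (taking $\Phi=\hat W$, the reflection $h_i\mapsto -h_i$ with the harmless factor $\left(\frac{-1}{r}\right)$, and absorbing $r^{3/2}$ into $M^{1/2}r^{3/2}$) just makes explicit what the paper leaves implicit.
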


Taking $M:=r^{1/2}$, we deduce the following.

\begin{Corollary} If $r$ is an odd prime and $W:\mathbb{R}\rightarrow \mathbb{C}$ is a Schwartz class function whose Fourier transform satisfies $0\le \hat{W}\le 1$ and is supported in $[-1,1]$, then
\begin{equation} \label{r1/2}
\sum\limits_{h_1,h_2,h_3,h_4\in \mathbb{Z}}  \prod\limits_{i=1}^4 W\left(\frac{h_i}{\sqrt{r}}\right)\cdot \left(\frac{h_1h_2h_3+h_1h_2h_4+h_1h_3h_4+h_2h_3h_4}{r}\right)\ll r^{7/4+\varepsilon}.
\end{equation}
\end{Corollary}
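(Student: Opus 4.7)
The plan is to apply the preceding Theorem with the specific choice $M := r^{1/2}$. The first step is to verify the hypothesis $1 \le M \le r$, which holds trivially since $r$ is an odd prime (so $r \ge 3$ and hence $r^{1/2} \le r$). The second and final step is to substitute $M = r^{1/2}$ into the three competing terms on the right-hand side of the Theorem's bound:
$$M^{1/2}r^{3/2} = r^{7/4}, \qquad M^2 r^{1/2} = r^{3/2}, \qquad M^3 = r^{3/2}.$$
Since $r^{3/2} \le r^{7/4}$ for $r \ge 1$, the first of these dominates, and the Theorem immediately yields the asserted bound of $\ll r^{7/4+\varepsilon}$.

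There is no substantive obstacle here: the Corollary is a direct specialization of the Theorem, chosen to highlight the case where the support scale $\sqrt{r}$ of each $h_i$ matches the square root of the modulus $r$. All the real work is already encoded in the proof of the Theorem, which in turn rests on the completion identity \eqref{relation} derived in this section together with the additive-energy bound \eqref{E2result2} of Kerr, Shkredov, Shparlinski and Zaharescu. One might further remark that, among the three terms $M^{1/2}r^{3/2}$, $M^2 r^{1/2}$, $M^3$, the choice $M = r^{1/2}$ is not the one which minimizes the overall bound (smaller values of $M$ would do better), but it is the natural balanced scale corresponding to the Fourier-dual length of a complete sum modulo $r$, and it is this scale that is of intrinsic interest for the character sum displayed in \eqref{r1/2}.
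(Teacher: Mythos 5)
Your proof is correct and coincides exactly with the paper's argument: the Corollary is deduced by substituting $M=r^{1/2}$ into the preceding Theorem, observing that $M^{1/2}r^{3/2}=r^{7/4}$ dominates the two terms $M^2r^{1/2}=M^3=r^{3/2}$. Your side remark that $M=r^{1/2}$ is the Fourier-dual scale rather than the minimizer is accurate and adds useful context not spelled out in the paper.
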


In \cite{PiXu}, Pierce and Xu established a remarkable bound for short character sums with general forms. However, for the special character sum with a Legendre symbol and a cubic form in four variables above, this general bound gives only $O(r^{2-1/56})$ for the left-hand side of \eqref{r1/2}, as compared to our bound $O(r^{2-1/4})$. 
We point out that the said form is an elementary symmetric polynomial. It would be interesting if one can establish bounds which improve Pierce's and Xu's result for other special forms and characters. 

\section{Possible ways to proceed}
In the bullet points below, we list possible ways to make further progess.
\begin{itemize} 
\item Try to generalize the established additive energy bounds for modular square roots in Theorems \ref{E2theo}and \ref{E4theo} from prime moduli to arbitrary moduli $r$. 
\item Try to improve these energy bounds. The ultimate goal is to make our improvements for the large sieve with square moduli unconditional. It needs to be said, though, that the methods in \cite{KSSZ}  are very elaborate, so this problem may be difficult. However, we don't need the full strength of our Hypothesis \ref{H2} to handle large moduli $r$ in our method non-trivially, as pointed out in section \ref{1}. So there is some hope.  
\item Prove an unconditional bound for the energy appearing in Hypothesis \ref{H3}.
\item The following question may be raised: If our additive energies are large, can we actually {\it utilize} this fact? Losely speaking, from additive combinatorics we know that large additive energies imply that there is a lot of additive structure.
\item Work out the connection between additive energies and special short character sums in section \ref{interesting} for general moduli (there we assumed $r$ to be prime). Do it also for higher additive energies such as $E_4(R;j,r)$.
\item This last point is vague but gives a possible direction: Exploit additive energies in connection to other problems related to the theory of exponential sums. 
\end{itemize}

\section{Appendix: Estimation of a complete exponential sum}
In this appendix we prove Lemma \ref{expsumbound}. Below we handle the case when $r$ is odd. Afterwards, we will indicate the modifications which need to be made in the case when $r$ is even. We begin by writing the exponential sum in question, defined in \eqref{complete}, in the form
\begin{equation} \label{completetrans} 
\begin{split}
\mathcal{E}_{j,h}(l,n)=& \sum\limits_{a \bmod{r}} \sum\limits_{\substack{k,\tilde{k} \bmod{r}\\ k^2\equiv ja\bmod{r}\\ \tilde{k}^2\equiv j(a+h)\bmod{r}}} e_r\left(l(\tilde{k}-k)+na\right)\\
=& \sum\limits_{\substack{k,\tilde{k}\bmod{r}\\ \tilde{k}^2-k^2\equiv jh\bmod{r}}} e_r\left(l(\tilde{k}-k)+n\overline{j}k^2 \right). 
\end{split}
\end{equation}
Now we make a change of variables 
\begin{equation*}
\begin{cases}
\tilde{k}-k\equiv b\bmod{r}\\
\tilde{k}+k\equiv c \bmod{r}.
\end{cases} 
\end{equation*}
Recalling that $r$ is odd, this is equivalent to
\begin{equation} \label{oddcase}
\begin{cases}
k\equiv \overline{2}(c-b)\bmod{r}\\
\tilde{k}\equiv \overline{2}(c+b) \bmod{r},
\end{cases} 
\end{equation}
where $\overline{2}\cdot 2\equiv 1\bmod{r}$.
It follows that
\begin{equation} \label{oddcaseE}
\begin{split}
\mathcal{E}_{j,h}(l,n)=& \sum\limits_{b \bmod{r}} \sum\limits_{\substack{c \bmod{r}\\ bc\equiv jh\bmod{r}}} e_r\left(lb+n\overline{4j}(c-b)^2\right),
\end{split}
\end{equation}
where $\overline{4j}\cdot 4j\equiv 1\bmod{r}$. 
Suppose that 
$$
d=(b,r).
$$
Then for the congruence 
$$
bc\equiv jh\bmod{r}
$$
to be solvable for $c$, it is necessary and sufficient that $d|h$. Set
$$
b_1:=\frac{b}{d}, \quad r_1:=\frac{r}{d}, \quad h_1:=\frac{h}{d}.
$$
Then it follows that $(b_1,r_1)=1$, and the said congruence is equivalent to
$$
c\equiv \overline{b_1}jh_1\bmod{r_1},
$$ 
where $\overline{b_1}b_1\equiv 1\bmod{r_1}$. 
Now dividing the $b$-sum in \eqref{completetrans} into subsums according to $d=(b,r)$, we obtain
\begin{equation*} 
\begin{split}
\mathcal{E}_{j,h}(l,n)=& \sum\limits_{d|(h,r)} \sum\limits_{\substack{b_1 \bmod{r_1}\\ (b_1,r_1)=1}} \sum\limits_{\substack{c \bmod{r}\\ c\equiv \overline{b_1}jh_1\bmod{r_1}}} e_r\left(ldb_1+n\overline{4j}(c-db_1)^2\right)\\
=&  \sum\limits_{d|(h,r)} \sum\limits_{\substack{b_1 \bmod{r_1}\\ (b_1,r_1)=1}} \sum\limits_{u\bmod{d}} e_r\left(ldb_1+n\overline{4j}(\overline{b_1}jh_1+ur_1-db_1)^2\right)\\
=& \sum\limits_{d|(h,r)} \sum\limits_{\substack{b_1 \bmod{r_1}\\ (b_1,r_1)=1}} e_{r_1}(lb_1) \sum\limits_{u\bmod{d}} e_r\left(n\overline{4j}(\overline{b_1}jh_1-db_1+ur_1)^2\right).
\end{split}
\end{equation*}
Here, we are free to fix $\overline{b_1}$ in such a way that $(\overline{b_1},d)=1$, which we want to assume in the following. Let 
$$
x:=n\overline{4j}, \quad y:=\overline{b_1}jh_1-db_1, \quad z:=\overline{b_1}jh_1. 
$$
Then the inner-most sum over $u$ turns into
\begin{equation*}
\begin{split}
\sum\limits_{u\bmod{d}} e_r\left(n\overline{4j}(\overline{b_1}jh_1-db_1+ur_1)^2\right)= &
\sum\limits_{u\bmod{d}} e_r\left(x(y+ur_1)^2\right)\\
= & e_r(xy^2)\sum\limits_{u\bmod{d}} e_d\left(x(r_1u^2+2yu)\right)\\
= & e_r\left(n\overline{4j}(\overline{b_1}jh_1-db_1)^2\right)\sum\limits_{u\bmod{d}} e_d\left(x(r_1u^2+2zu)\right).
\end{split}
\end{equation*}
Further, let 
$$
f=(n,d)
$$ 
and set 
\begin{equation*}
n_1:=\frac{n}{f}, \quad d_1:=\frac{d}{f}, \quad x_1:=\frac{x}{f}=n_1\overline{4j}.
\end{equation*}
Then 
\begin{equation*} 
\begin{split}
\sum\limits_{u\bmod{d}} e_d\left(x(r_1u^2+2zu)\right)=f\sum\limits_{v\bmod{d_1}} e_{d_1}\left(x_1(r_1v^2+2zv)\right).
\end{split}
\end{equation*}
Now we note that 
$$
(x_1,d_1)=(n_1,d_1)=1\quad \mbox{and}\quad (z,d_1)=(h_1,d_1).
$$ 
By Proposition \ref{Gausssums}, for the above quadratic Gauss sum over $v$ to be non-zero,  it is therefore necessary and sufficient that 
$$
(r_1,d_1)|h_1.
$$ 
In this case, we set 
$$
g:=(r_1,d_1), \quad r_2:=\frac{r_1}{g},\quad d_2:=\frac{d_1}{g}, \quad h_2:=\frac{h_1}{g}, \quad z_2:=\frac{z}{g}=\overline{b_1}jh_2.
$$
It then follows that
\begin{equation*} 
\sum\limits_{v\bmod{d_1}} e_{d_1}\left(x_1(r_1v^2+2zv\right)=g \sum\limits_{w\bmod{d_2}} e_{d_2}\left(x_1r_2w^2+2x_1z_2w\right)
\end{equation*}
with 
$$
(r_2,d_2)=1.
$$
Finally, applying Proposition \ref{Gausssums}, we get
\begin{equation}\label{thegauss} 
\begin{split}
\sum\limits_{w\bmod{d_2}} e_{d_2}\left(x_1(r_2w^2+2z_2w)\right)=&\epsilon_{d_2}\cdot e_{d_2}\left(-\overline{r_2}x_1z_2^2\right)\cdot \left(\frac{x_1r_2}{d_2}\right)\cdot \sqrt{d_2}\\ = & \epsilon_{d_2}\cdot e_{d_2}\left(-\overline{r_2}n_1j(\overline{2b_1}h_2)^2\right)\cdot \left(\frac{n_1jr_2}{d_2}\right)\cdot \sqrt{d_2},
\end{split}
\end{equation}
where $\overline{r_2}r_2\equiv 1\bmod{d_2}$. 
Combining everything above, we arrive at
\begin{equation} \label{newexp} 
\begin{split}
\mathcal{E}_{j,h}(l,n)= &\sum\limits_{d|(h,r)} fg \epsilon_{d_2}\cdot \left(\frac{n_1jr_2}{d_2}\right)\cdot \sqrt{d_2}\times\\ & \sum\limits_{\substack{b_1 \bmod{r_1}\\ (b_1,r_1)=1}} e_{r_1}(lb_1) e_r\left(n\overline{4j}(\overline{b_1}jh_1-db_1)^2\right)e_{d_2}\left(-\overline{r_2}n_1j(\overline{2b_1}h_2)^2\right),
\end{split}
\end{equation}
where we recall our conventions that $\overline{b_1}b_1\equiv 1\bmod{r_1}$ and $(\overline{b_1},d)=1$. Moreover,
\begin{equation*}
\begin{split}
e_r\left(n\overline{4j}(\overline{b_1}jh_1-db_1)^2\right)= &
e_r\left(fn_1\overline{4j}(\overline{b_1}jgh_2-fgd_2b_1)^2\right)\\
= & e_r\left(fg^2n_1\overline{4j}(\overline{b_1}jh_2-fd_2b_1)^2\right)\\
= & e_{r_2d_2}\left(n_1\overline{4j}(\overline{b_1}jh_2-fd_2b_1)^2\right)
\end{split}
\end{equation*}
since 
$$
\frac{r}{fg^2}=\frac{dr_1}{fg^2}=r_2d_2.
$$
It follows that
\begin{equation} \label{recip}
\begin{split}
& e_r\left(n\overline{4j}(\overline{b_1}jh_1-db_1)^2\right)e_{d_2}\left(-\overline{r_2}n_1j(\overline{2b_1}h_2)^2\right)
\\ = & e_{r_2d_2}\left(n_1\overline{4j}(\overline{b_1}jh_2-fd_2b_1)^2\right)e_{d_2}\left(-\overline{r_2}n_1\overline{4j}(\overline{b_1}jh_2-fd_2b_1)^2\right)\\
= & e_{r_2}\left(\overline{d_2}n_1\overline{4j}(\overline{b_1}jh_2-fd_2b_1)^2\right),
\end{split}
\end{equation}
where $\overline{d_2}d_2\equiv 1\bmod{r_2}$. Here we have used
the reciprocity law 
$$
\frac{1}{r_2d_2}\equiv \frac{\overline{r_2}}{d_2}+\frac{\overline{d_2}}{r_2}\bmod{1}
$$
for Kloosterman fractions in the second equation in \eqref{recip}.
Combining \eqref{newexp} and \eqref{recip}, we obtain
\begin{equation} \label{expsuminterest}  
\begin{split}
\mathcal{E}_{j,h}(l,n)= & \sum\limits_{d|(h,r)} fg \epsilon_{d_2}\cdot \left(\frac{n_1jr_2}{d_2}\right)\cdot \sqrt{d_2}\cdot \sum\limits_{\substack{b_1 \bmod{r_1}\\ (b_1,r_1)=1}} e_{r_1}(lb_1)e_{r_2}\left(\overline{d_2}n_1\overline{4j}(\overline{b_1}jh_2-fd_2b_1)^2\right)\\
= &  \sum\limits_{d|(h,r)} fg \epsilon_{d_2}\cdot \left(\frac{n_1jr_2}{d_2}\right)\cdot \sqrt{d_2}\cdot \sum\limits_{\substack{b_1 \bmod{r_1}\\ (b_1,r_1)=1}} e_{r_1}\left(lb_1+n_1g\overline{d_2}\cdot\frac{(jh_2-fd_2b_1)^2}{4jb_1^2}\right),
\end{split}
\end{equation}
where the denominator $4jc^2$ indicates a multiplicative inverse modulo $r_1$. Now we define
$$
\mathcal{G}(q;a,b,j,k,u,s):=\sum\limits_{\substack{c=1\\ (c,q)=1}}^{q}e_{q}\left(ac+b\cdot \frac{(jk-us^2c^2)^2}{4js^3c^2}\right),
$$ 
where we assume that $(js,q)=1$. Then the exponential sum in the last line of \eqref{expsuminterest} equals
\begin{equation} \label{expequals}
\sum\limits_{\substack{b_1 \bmod{r_1}\\ (b_1,r_1)=1}} e_{r_1}\left(lb_1+n_1\cdot\frac{(jh_2-fd_2b_1)^2}{4jb_1^2}\right)=
\mathcal{G}(r_1;l,n_1g\overline{d_2},j,h_2,fd_2,1).
\end{equation}
The exponential sum $\mathcal{G}(q;a,b,j,k,u,s)$ has the following multiplicative property.

\begin{Lemma} Suppose that $q_1,q_2\in \mathbb{N}$ with $(q_1,q_2)=1$ and $a,b,j,k,u,s\in \mathbb{Z}$, where $(js,q)=1$. Then 
\begin{equation} \label{multi}
\mathcal{G}(q_1q_2;a,b,j,k,u,s)=\mathcal{G}(q_1;a,b,j,k,u,sq_2)\mathcal{G}(q_2;a,b,j,k,u,sq_1).
\end{equation}
\end{Lemma}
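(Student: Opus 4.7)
The plan is to use the Chinese remainder theorem. Since $(q_1,q_2)=1$, every $c$ with $1\le c\le q_1q_2$ and $(c,q_1q_2)=1$ can be written uniquely as $c\equiv c_1q_2+c_2q_1\pmod{q_1q_2}$ with $c_1$ running through reduced residues modulo $q_1$ and $c_2$ through reduced residues modulo $q_2$. To split the exponential, I would apply the Kloosterman reciprocity
\[
\frac{1}{q_1q_2}\equiv \frac{\bar q_2}{q_1}+\frac{\bar q_1}{q_2}\pmod 1,
\]
where $q_j\bar q_j\equiv 1\pmod{q_{3-j}}$, which gives $e_{q_1q_2}(x)=e_{q_1}(\bar q_2 x)e_{q_2}(\bar q_1 x)$ for every integer $x$ (and indeed for every element of $\mathbb{Z}$ localized at $4js^3c^2$, once this is invertible modulo $q_1q_2$).

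Next, I would reduce modulo $q_1$. Since $c\equiv c_1q_2\pmod{q_1}$, the linear term satisfies $\bar q_2\,ac\equiv ac_1\pmod{q_1}$. The crux is the rational term: substituting $c\equiv c_1q_2$ and multiplying by $q_2/q_2$, I would use the algebraic identity
\[
\frac{(jk-us^2q_2^2c_1^2)^2}{4js^3q_2^2c_1^2}\;=\;q_2\cdot\frac{\bigl(jk-u(sq_2)^2c_1^2\bigr)^2}{4j(sq_2)^3c_1^2},
\]
valid as a rational function and in particular valid modulo $q_1$ after interpreting inverses. The factor $\bar q_2$ from the reciprocity then cancels the $q_2$ in front, leaving precisely the summand of $\mathcal G(q_1;a,b,j,k,u,sq_2)$. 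By symmetry, the reduction modulo $q_2$ produces the summand of $\mathcal G(q_2;a,b,j,k,u,sq_1)$. Summing the product over all $(c_1,c_2)$ and appealing to CRT gives \eqref{multi}.

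There is no serious obstacle; this is a direct CRT computation once one spots the rescaling $s\mapsto sq_2$ (respectively $s\mapsto sq_1$) that absorbs the CRT factors of $q_2$ (respectively $q_1$) into the parameter $s$. The only technicalities are to check that every inverse appearing is well-defined: the coprimality $(4jsq_2c_1,q_1)=1$ follows from $(js,q_1q_2)=1$, $(q_1,q_2)=1$, $(c_1,q_1)=1$, together with the oddness of $q_1$ inherited from the odd case of $r$ treated in this appendix, and the analogous statement modulo $q_2$ is identical.
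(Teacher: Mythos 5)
Your proposal is correct and amounts to essentially the same argument as in the paper: a direct Chinese Remainder Theorem factorisation in which the rescaling $s\mapsto sq_2$ (respectively $s\mapsto sq_1$) absorbs the extra powers of $q_2$ (respectively $q_1$). The paper runs the computation in the opposite direction, expanding the right-hand side and recombining via the CRT identities $(c_1q_2+c_2q_1)^2\equiv c_1^2q_2^2+c_2^2q_1^2$, $\overline{q_1}q_1+\overline{q_2}q_2\equiv 1$ and $\overline{c_1q_2+c_2q_1}^{\,2}\equiv\overline{c_1}^{\,2}\overline{q_2}^{\,2}+\overline{c_2}^{\,2}\overline{q_1}^{\,2}$ modulo $q$ after multiplying out the square $(jk-us^2c^2)^2$, whereas you split the left-hand side via Kloosterman reciprocity and keep the summand as a single rational fraction, using the clean identity
\[
\frac{(jk-us^2q_2^2c_1^2)^2}{4js^3q_2^2c_1^2}\;=\;q_2\cdot\frac{\bigl(jk-u(sq_2)^2c_1^2\bigr)^2}{4j(sq_2)^3c_1^2}
\]
to absorb the CRT scaling; the two are equivalent bookkeeping for the same CRT argument.
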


\begin{proof}
Multiplying out the square in the numerator, we get
\begin{equation} \label{used}
\mathcal{G}(q;a,b,j,k,u,s):=\sum\limits_{\substack{c=1\\ (c,q)=1}}^{q}e_{q}\left(ac+b\left(\frac{jk^2}{4s^3c^2}-\frac{ku}{2s}+\frac{u^2sc^2}{4j}\right)\right).
\end{equation}
Hence, the right-hand side of \eqref{multi} becomes
\begin{equation*}
\begin{split}
&\mathcal{G}(q_1;a,b,j,k,u,sq_2)\mathcal{G}(q_2;a,b,j,k,u,sq_1)\\ = &  
\left(\sum\limits_{\substack{c_1=1\\ (c_1,q_1)=1}}^{q_1}e_{q_1}\left(ac_1+\overline{4}b\left(jk^2\overline{s^3q_2^3c_1^2}-2ku\overline{sq_2}+\overline{j}u^2sq_2c_1^2\right)\right)\right)\times\\ & \left(\sum\limits_{\substack{c_2=1\\ (c_2,q_2)=1}}^{q_2}e_{q_2}\left(ac_2+\overline{4}b\left(jk^2\overline{s^3q_1^3c_2^2}-2ku\overline{sq_1}+\overline{j}u^2sq_1c_2^2\right)\right)\right)
\\
= 
& \sum\limits_{\substack{c_1=1\\ (c_1,q_1)=1}}^{q_1}\sum\limits_{\substack{c_2=1\\ (c_2,q_2)=1}}^{q_2}
e_q\left(a\left(c_1q_2+c_2q_1\right)+\overline{4}b\left(jk^2\overline{s}^3\left(\overline{c_1}^2\overline{q_2}^2+\overline{c_2}^2\overline{q_1}^2\right)-2ku\overline{s}\left(\overline{q_1}q_1+\overline{q_2}q_2\right)+\overline{j}u^2s\left(c_1^2q_2^2+c_2^2q_1^2\right)\right)\right).
\end{split}
\end{equation*}
As $c_i$ runs over all reduced residue classes modulo $q_i$ for $i=1,2$, $c_1q_2+c_2q_1$ runs over all reduced residue classes modulo $q$. Moreover, we check that 
\begin{equation*}
\begin{split}
(c_1q_2+c_2q_1)^2\equiv & c_1^2q_2^2+c_2^2q_1^2\bmod{q},\\
\overline{q_1}q_1+\overline{q_2}q_2 \equiv & 1 \bmod{q},\\
\overline{c_1q_2+c_2q_1}^2\equiv & \overline{c_1}^2\overline{q_2}^2+\overline{c_2}^2\overline{q_1}^2 \bmod{q}
\end{split}
\end{equation*}
using the Chinese remainder theorem. It follows that
\begin{equation*}
\begin{split}
\mathcal{G}(q_1;a,b,j,k,u,sq_2)\mathcal{G}(q_2;a,b,j,k,u,sq_1)
= &
\sum\limits_{\substack{c=1\\ (c,q)=1}}^{q}
e_q\left(ac+\overline{4}b(jk^2\overline{s}^3\overline{c}^2-2ku\overline{s}+\overline{j}u^2sc^2)\right)\\
=& \mathcal{G}(q;a,b,j,k,u,s),
\end{split}
\end{equation*}
where for the second equation, we have used \eqref{used} again.  
\end{proof}

So to estimate $\mathcal{G}(q;a,b,j,k,u,s)$, it suffices to bound $\mathcal{G}(p^m;a,b,j,k,u,s)$
for prime powers $q=p^m$ with $p\not=2$ (since we assumed $r$ to be odd). 
Our goal is to prove that
\begin{equation} \label{estigoal} 
\mathcal{G}(q;a,b,j,k,u,s)\le 12q^{4/5}(bu^2,a,bk^2,q)^{1/5} \quad \mbox{ if } q=p^m.
\end{equation}
We write 
\begin{equation} \label{place}
\mathcal{G}(p^m;a,b,j,k,u,s)=\sum\limits_{\substack{x=1\\ (x,p)=1}}^{p^m} e(f(x)),
\end{equation}
where
$$
f(x)=ax+b\cdot \frac{(jk-us^2x^2)^2}{4js^3x^2}=ax+b\left(\frac{jk^2}{4s^3x^2}-\frac{ku}{2s}+\frac{u^2sx^2}{4j}\right).
$$
We calculate that
$$
f'(x)=a+b\left(-\frac{jk^2}{2s^3x^3}+\frac{u^2sx}{2j}\right)=\frac{bu^2s^4x^4+2ajs^3x^3-bj^2k^2}{2js^3x^3}.
$$
Since $(2js,p)=1$, we have $\mbox{ord}_p(f')=t$, where $p^t=(bu^2,a,bk^2,p^m)$. If $t=m$, then \eqref{estigoal} holds trivially. If $t=m-1$, then dividing $f(x)$ by $p^t$, we obtain an exponential sum modulo $p$ which can be estimated by Proposition \ref{primemoduli}, thus establishing \eqref{estigoal} as well. If $p\not= 2$ and $m\ge t+2$, then Proposition \ref{primepowermoduli} implies \eqref{estigoal} since the number of roots of $p^{-t}f'(x)$ modulo $p$ is bounded by $4$ and their multiplicities as well. Using \eqref{multi} and \eqref{estigoal}, we deduce that
$$
\mathcal{G}(q;a,b,j,k,u,s)\ll_{\varepsilon} q^{4/5+\varepsilon}(bu^2,a,bk^2,q)^{1/5} \quad \mbox{ for all odd } q\in \mathbb{N}. 
$$
It follows that 
\begin{equation} \label{necest}
\mathcal{G}(r_1;l,n_1,j,h_2,fd_2,1)\ll r_1^{4/5+\varepsilon}(l,r_1)^{1/5}.
\end{equation}
Combining \eqref{expsuminterest}, \eqref{expequals} and \eqref{necest}, we obtain
\begin{equation} \label{expsuminterestbound}
\begin{split}
|\mathcal{E}_{j,h}(l,n)|\le & \sum\limits_{d|(h,r)} fgd_2^{1/2}r_1^{4/5+\varepsilon}(l,r_1)^{1/5}\\
= & \sum\limits_{d|(h,r)} dd_2^{-1/2} r_1^{4/5+\varepsilon}(l,r_1)^{1/5} \\
\ll & r^{4/5+2\varepsilon} (h,r)(l,r)^{1/5}.
\end{split}
\end{equation}
This completes the proof of Lemma \ref{expsumbound} if $r$ is odd.

If $r$ is even, then we follow the same procedure with some small modifications. Firstly, if $r$ is even, then the system of congruences in \eqref{oddcase} needs to be replaced by 
\begin{equation*}
\begin{cases}
k\equiv (c-b)/2\bmod{r/2}\\
\tilde{k}\equiv (c+b)/2 \bmod{r/2}.
\end{cases} 
\end{equation*}
The equation \eqref{oddcaseE} then needs to be modified accordingly. Secondly, the evaluation of the quadratic Gauss sum in \eqref{thegauss} slightly changes if $d_2$ is even. Thirdly, $\mathcal{G}(r_1;l,n_1g\overline{d_2},j,h_2,fd_2,1)$ takes a slightly different form if $r_1$ is even, and we also need to bound the relevant exponential sums modulo powers of 2. This is not a problem since Proposition \ref{primepowermoduli} covers the case of powers of 2. Altogether, we arrive at the same estimate, proving Lemma \ref{expsumbound} for the case when $r$ is even as well.

\end{document}